\long\def\symbolfootnote[#1]#2{\begingroup%
\def\thefootnote{\fnsymbol{footnote}}\footnote[#1]{#2}\endgroup}
\qed\vspace{5pt}}
\newtheoremstyle{lause}
{5pt}
{5pt}
{\slshape}
{\parindent}
{\bfseries}
{.}
{.5em}
{}
\theoremstyle{lause}
\newtheoremstyle{maaritelma}
{5pt}
{5pt}
{\rmfamily}
{\parindent}
{\bfseries}
{.}
{.5em}
{}
\theoremstyle{maaritelma}
\newtheoremstyle{lause}
{5pt}
{5pt}
{\slshape}
{\parindent}
{\bfseries}
{.}
{.5em}
{}
\theoremstyle{lause}
\newtheorem{theorem}{Theorem}[section]
\newtheorem{lemma}[theorem]{Lemma}
\newtheorem{corollary}[theorem]{Corollary}
\newtheorem{problem}[theorem]{Problem}
\newtheoremstyle{maaritelma}
{5pt}
{5pt}
{\rmfamily}
{\parindent}
{\bfseries}
{.}
{.5em}
{}
\theoremstyle{maaritelma}
\newtheorem{definition}[theorem]{Definition}
\newtheorem{example}[theorem]{Example}
\newtheorem{remark}[theorem]{Remark}
\numberwithin{equation}{section}
\DeclareMathOperator*{\essinf}{ess\,inf}
\begin{document}

\thispagestyle{empty}

\begin{center}

{\large{\textbf{Inner Riesz balayage in minimum energy problems\\ with external fields}}}

\vspace{18pt}

\textbf{Natalia Zorii}

\vspace{18pt}


\footnotesize{\address{Institute of Mathematics, Academy of Sciences
of Ukraine, Tereshchenkivska~3, 01601,
Kyiv-4, Ukraine\\
natalia.zorii@gmail.com }}

\end{center}

\vspace{12pt}

{\footnotesize{\textbf{Abstract.} For the Riesz kernel $\kappa_\alpha(x,y):=|x-y|^{\alpha-n}$ on $\mathbb R^n$, where $n\geqslant2$, $\alpha\in(0,2]$, and $\alpha<n$, we consider the problem of minimizing the Gauss functional
\[\int\kappa_\alpha(x,y)\,d(\mu\otimes\mu)(x,y)+2\int f\,d\mu,\quad\text{where $f:=-\int\kappa_\alpha(\cdot,y)\,d\omega(y)$},\]
$\omega$ being a given positive (Radon) measure on $\mathbb R^n$, and $\mu$ ranging over all positive measures of finite energy, concentrated on $A\subset\mathbb R^n$ and having unit total mass. We prove that if $A$ is a quasiclosed set of nonzero inner capacity $c_*(A)$, and if the inner balayage $\omega^A$ of $\omega$ onto $A$ is of finite energy, then the solution $\lambda_{A,f}$ to the problem in question exists if and only if either $c_*(A)<\infty$, or $\omega^A(\mathbb R^n)\geqslant1$. Despite its simple form, this result improves substantially some of the latest ones, e.g.\ those by Dragnev et al.\ (Constr.\ Approx., 2023) as well as those by the author (J.\ Math.\ Anal.\ Appl., 2023).
We also provide alternative characterizations of $\lambda_{A,f}$, and analyze its support. As an application, we show that if $A$ is not inner $\alpha$-thin at infinity, and is "very thin" at $z\in\partial_{\mathbb R^n}A$ (to be precise, if $c_*(A^*_z)<\infty$, $A^*_z$ being the inverse of $A$ with respect to the unit sphere centered at $z$), then the above problem with $\omega:=q\varepsilon_z$, where $q\in[1,\infty)$ and $\varepsilon_z$ denotes the unit Dirac measure at $z$, is still solvable. Thus no compensation effect occurs between the two oppositely signed charges, $-q\varepsilon_z$ and $\lambda_{A,f}$, carried by the same conductor $A$, which seems to contradict our physical intuition.
}}
\symbolfootnote[0]{\quad 2010 Mathematics Subject Classification: Primary 31C15.}
\symbolfootnote[0]{\quad Key words: Minimum Riesz energy problems with external fields; inner Riesz balayage; inner Riesz equilibrium measure; inner $\alpha$-harmonic measure; inner $\alpha$-thin\-n\-ess and $\alpha$-ultra\-thin\-n\-ess of a set at infinity.
}

\vspace{6pt}

\markboth{\emph{Natalia Zorii}} {\emph{Inner Riesz balayage in minimum energy problems with external fields}}

\section{Introduction and statement of the problem}\label{sec-alpha}

Fix $n\in\mathbb N$, $n\geqslant2$, and $\alpha\in(0,n)$, $\alpha\leqslant2$. The current work deals with minimum energy problems in the presence of external fields $f$ with respect to the Riesz kernel $\kappa_\alpha(x,y):=|x-y|^{\alpha-n}$ on $\mathbb R^n$ ($|x-y|$ being the Euclidean distance between $x$ and $y$), a point of interest for many researchers (see e.g.\ the monographs \cite{BHS,ST} and references therein, as well as \cite{CSW,Dr0}, \cite{Z-Rarx}--\cite{Z-pseudo}, some of the latest papers on this topic).

To formulate the problem, we first introduce some notations.
We denote by $\mathfrak M$ the linear space of all (re\-al-val\-u\-ed Radon) measures $\mu$ on $\mathbb R^n$, equipped with the {\it vague} topology of pointwise convergence on the class $C_0(\mathbb R^n)$ of all continuous functions $\varphi:\mathbb R^n\to\mathbb R$ of compact support, and by $\mathfrak M^+$ the cone of all positive $\mu\in\mathfrak M$, where $\mu$ is {\it positive} if and only if $\mu(\varphi)\geqslant0$ for all positive $\varphi\in C_0(\mathbb R^n)$.

Given $\mu,\nu\in\mathfrak M$, we define the {\it potential} $U^\mu$ and the {\it mutual energy} $I(\mu,\nu)$ by
\begin{align*}
 U^\mu(x)&:=\int\kappa_\alpha(x,y)\,d\mu(y),\quad x\in\mathbb R^n,\\
 I(\mu,\nu)&:=\int\kappa_\alpha(x,y)\,d(\mu\otimes\nu)(x,y),
\end{align*}
respectively, provided that the integral on the right is well defined (as a finite number or $\pm\infty$). For $\mu=\nu$, $I(\mu,\nu)$ defines the {\it energy} $I(\mu):=I(\mu,\mu)$ of $\mu\in\mathfrak M$.

When speaking of a positive measure $\mu\in\mathfrak M^+$, we understand that its potential $U^\mu$
is not identically infinite: $U^\mu\not\equiv+\infty$ on $\mathbb R^n$; or equivalently (cf.\ \cite[Section~I.3.7]{L})
\begin{equation*}
\int_{|y|>1}\,\frac{d\mu(y)}{|y|^{n-\alpha}}<\infty.
\end{equation*}
Actually, then (and only then) $U^\mu$ is finite {\it qua\-si-ev\-ery\-where} ({\it q.e.})\ on $\mathbb R^n$, cf.\
\cite[Section~III.1.1]{L}, namely everywhere except for a subset of outer capacity zero.\footnote{For {\it outer} and {\it inner} capacities, denoted by $c^*(\cdot)$ and $c_*(\cdot)$, respectively, see  \cite[Section~II.2.6]{L}. We write $c(Q):=c_*(Q)$ if $Q\subset\mathbb R^n$ is {\it capacitable} (e.g.\ Borel, see \cite[Theorem~2.8]{L})~--- that is, if $c_*(Q)=c_*(Q)$. A proposition $\mathcal P(x)$ involving a variable point $x\in\mathbb R^n$ is said to hold {\it nearly everywhere} ({\it n.e.})\ on $A\subset\mathbb R^n$ if the set of all $x\in A$ where $\mathcal P(x)$ fails, is of inner capacity zero.} This would necessarily hold if $\mu$ were required to be {\it bounded} (that is, with $\mu(\mathbb R^n)<\infty$), or of finite energy, cf.\ \cite[Corollary to Lemma~3.2.3]{F1}.

It was discovered by Riesz \cite[Chapter~I, Eq.~(13)]{R} (cf.\ \cite[Theorem~1.15]{L}) that the Riesz kernel $\kappa_\alpha$ is {\it strictly positive definite} in the sense that $I(\mu)\geqslant0$ for all (signed) $\mu\in\mathfrak M$, and moreover $I(\mu)=0\iff\mu=0$. Due to this fact, all $\mu\in\mathfrak M$ with $I(\mu)<\infty$ form a pre-Hil\-bert space $\mathcal E$ with the inner product $\langle\mu,\nu\rangle:=I(\mu,\nu)$ and the energy norm $\|\mu\|:=\sqrt{I(\mu)}$, see e.g.\ \cite[Lemma~3.1.2]{F1}. The topology on $\mathcal E$ defined by means of the norm $\|\cdot\|$, is said to be {\it strong}.

It is crucial to our study that the cone $\mathcal E^+:=\mathfrak M^+\cap\mathcal E$ is {\it complete} in the induced strong topology, and the strong topology on $\mathcal E^+$ is finer than the (induced) vague topology on $\mathcal E^+$ (see Deny
\cite{D1}; for $\alpha=2$, see also Cartan \cite{Ca1}). Thus any strong Cauchy sequence (net) $(\mu_j)\subset\mathcal E^+$
converges {\it both strongly and vaguely} to the same unique limit $\mu_0\in\mathcal E^+$, the strong topology on $\mathcal E$ as well as the vague topology on $\mathfrak M$ being Hausdorff. (Following Fuglede \cite{F1}, such a kernel is said to be {\it
perfect}.)

For any $A\subset\mathbb R^n$, we denote by $\mathfrak M^+(A)$ the class of all $\mu\in\mathfrak M^+$ {\it concentrated on} $A$, which means that $A^c:=\mathbb R^n\setminus A$ is $\mu$-neg\-lig\-ible, or equivalently that $A$ is $\mu$-mea\-s\-ur\-ab\-le and $\mu=\mu|_A$, $\mu|_A$ being the trace of $\mu$ to $A$, cf.\ \cite[Section~V.5.7]{B2}. (If $A$ is closed, then $\mu\in\mathfrak M^+(A)$ if and only if $S(\mu)\subset A$,
$S(\mu)$ being the support of $\mu$.) We define
\[\mathcal E^+(A):=\mathfrak M^+(A)\cap\mathcal E,\quad\breve{\mathcal E}^+(A):=\bigl\{\mu\in\mathcal E^+(A):\ \mu(\mathbb R^n)=1\bigr\}.\]

Fix a universally measurable function $f:\mathbb R^n\to[-\infty,\infty]$, treated as an {\it external field} acting on the measures (charges) carried by $\overline{A}:={\rm Cl}_{\mathbb R^n}A$, and let $\breve{\mathcal E}^+_f(A)$ be the class of all $\mu\in\breve{\mathcal E}^+(A)$ such that $f$ is $\mu$-in\-t\-eg\-r\-ab\-le (see \cite{B2}, Chapter~IV, Sections~3,~4), or equivalently with
\[I_f(\mu):=\|\mu\|^2+2\int f\,d\mu\in(-\infty,\infty).\]
Define
\[w_f(A):=\inf_{\mu\in\breve{\mathcal E}^+_f(A)}\,I_f(\mu)\in[-\infty,\infty],\]
where, as usual, the infimum over the empty set is interpreted as $+\infty$.

\begin{problem}\label{pr}If $-\infty<w_f(A)<\infty$, does there exist $\lambda_{A,f}\in\breve{\mathcal E}^+_f(A)$ with \[I_f(\lambda_{A,f})=w_f(A)?\]
\end{problem}

Being originated by Gauss \cite{Gau} (cf.\ also Frostman \cite[Section~17]{Fr}), Problem~\ref{pr} is nowadays often referred to as {\it the inner Gauss variational problem} \cite{O}, while $I_f(\cdot)$ is termed {\it the Gauss functional} \cite{L}. (In constructive function theory, $I_f(\cdot)$ is often said to be {\it the $f$-weighted energy}, see e.g.\ \cite{BHS,ST}.)

The solution $\lambda_{A,f}$ to Problem~\ref{pr} is {\it unique}. This follows easily from the convexity of the class $\breve{\mathcal E}^+_f(A)$ by making use of the parallelogram identity in the
pre-Hil\-bert space $\mathcal E$ and the strict positive definiteness of the Riesz kernel (see e.g.\ \cite[Lemma~6]{Z5a}).

Concerning the solvability of Problem~\ref{pr}, in the existing literature on this topic the set $A$ is usually required to be closed, while the external field $f$ to be lower semicontinuous (l.s.c.)\ on $A$ (see \cite{BHS,CSW,Dr0,L,O,ST}). If moreover $f\geqslant0$ unless $A$ is compact, then the Gauss functional $I_f(\cdot)$ becomes vaguely l.s.c.\ on $\mathfrak M^+(A)$, which plays a decisive role in the analysis of the solvability of Problem~\ref{pr}.

$\P$ However, the requirement of lower semicontinuity of $f$ on $\overline{A}$ is not, in general, fulfilled even if\footnote{Unless, of course, $S(\omega)\cap\overline{A}=\varnothing$.}
\begin{equation}\label{f}
f:=-U^\omega,\quad\text{where $\omega\in\mathfrak M^+$}.
\end{equation}

Henceforth, we assume (\ref{f}) to hold. To explore Problem~\ref{pr} for such external fields and for quite general (not necessarily closed) sets $A$, we apply the approach developed in our recent paper \cite{Z-Rarx}, which is based on the systematic use of both the strong and the vague topologies on the cone $\mathcal E^+$, related to each other as indicated above. Due to a refined application of that approach as well as of the theories of inner balayage and inner equilibrium measures,\footnote{Regarding the theory of inner Riesz balayage and that of inner Riesz equilibrium measures, see the author's recent papers \cite{Z-bal}--\cite{Z-arx-22}, cf.\ also Section~\ref{sec-baleq} below for a brief survey.} we have improved a number of the latest results on this topic, thereby discovering new interesting phenomena, which seem to be quite surprising and even to contradict our physical intuition.

To formulate the results thereby obtained (Section~\ref{sec-main}), we first recall the following theorem, providing characteristic properties of the minimizer $\lambda_{A,f}$; it can be derived from the author's earlier paper
\cite{Z5a} (see Theorems~1, 2 and Proposition~1 therein).

\begin{theorem}\label{th-ch2}For $\lambda\in\breve{\mathcal E}^+_f(A)$ to be the {\rm(}unique{\rm)} solution
$\lambda_{A,f}$ to Problem~{\rm\ref{pr}}, it is necessary and sufficient that either of the following two inequalities be
fulfilled:
\begin{align}U_f^\lambda&\geqslant\int U_f^\lambda\,d\lambda\quad\text{n.e.\ on $A$},\label{1}\\
U_f^\lambda&\leqslant w_f(A)-\int f\,d\lambda\quad\text{$\lambda$-a.e.\ on $\mathbb R^n$,}\label{2}
\end{align}
where
\[U_f^\lambda:=U^\lambda+f\]
is said to be the $f$-weighted potential of $\lambda$.
If either of {\rm(\ref{1})} or {\rm(\ref{2})} holds true, then equality actually prevails in {\rm(\ref{2})}, and moreover
\begin{equation}\label{cc}
\int U_f^{\lambda}\,d\lambda=w_f(A)-\int f\,d\lambda=:c_{A,f}\in(-\infty,\infty),
\end{equation}
$c_{A,f}$ being referred to as the inner $f$-weighted equilibrium constant.
\end{theorem}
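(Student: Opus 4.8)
The plan is to treat Problem~\ref{pr} as a problem of convex minimization in the pre-Hilbert space $\mathcal E$ and to read off both (\ref{1}) and (\ref{2}) from a single variational inequality. Note first that for every $\lambda\in\breve{\mathcal E}^+_f(A)$ the quantity
\[
c_\lambda:=\int U_f^\lambda\,d\lambda=\|\lambda\|^2+\int f\,d\lambda
\]
is a finite real number, since $\|\lambda\|<\infty$ and $f$ is $\lambda$-integrable, and likewise $\int U_f^\lambda\,d\mu=\langle\lambda,\mu\rangle+\int f\,d\mu\in\mathbb R$ for each $\mu\in\breve{\mathcal E}^+_f(A)$. I would show that $\lambda$ is the solution $\lambda_{A,f}$ \emph{if and only if}
\begin{equation*}
\int U_f^\lambda\,d\mu\geqslant c_\lambda\qquad\text{for all }\mu\in\breve{\mathcal E}^+_f(A),\tag{$\ast$}
\end{equation*}
and then derive everything else from ($\ast$).

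For the necessity of ($\ast$), fix $\mu\in\breve{\mathcal E}^+_f(A)$; by convexity of $\breve{\mathcal E}^+_f(A)$ the measure $\mu_t:=(1-t)\lambda+t\mu$ again belongs to $\breve{\mathcal E}^+_f(A)$ for $t\in(0,1]$, and expanding the Gauss functional gives
\[
0\leqslant I_f(\mu_t)-I_f(\lambda)=t^2\|\mu-\lambda\|^2+2t\Bigl(\int U_f^\lambda\,d\mu-c_\lambda\Bigr);
\]
dividing by $2t$ and letting $t\downarrow0$ yields ($\ast$). Conversely, taking $t=1$ in the same identity and using ($\ast$) together with the strict positive definiteness of $\kappa_\alpha$ gives $I_f(\mu)-I_f(\lambda)\geqslant\|\mu-\lambda\|^2\geqslant0$ for all $\mu\in\breve{\mathcal E}^+_f(A)$, so $I_f(\lambda)=w_f(A)$; this computation also reproves the uniqueness of the minimizer.

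I would next obtain (\ref{1}) from ($\ast$) by the classical capacity argument. Suppose $\lambda=\lambda_{A,f}$ but $c_*(E)>0$, where $E:=\{x\in A:\ U_f^\lambda(x)<c_\lambda\}$; then $E$ contains a compact $K$ with $0<c(K)<\infty$. Let $\gamma_K$ be the (inner) equilibrium measure of $K$ and set $\nu:=\gamma_K/c(K)$, so $\nu\in\mathcal E^+(K)$ and $\nu(\mathbb R^n)=1$. Since $\alpha\leqslant2$, the kernel $\kappa_\alpha$ obeys the maximum principle, whence $U^\nu$ is bounded; as $U^\nu$ moreover decays like $|y|^{\alpha-n}$ at infinity while $\int_{|y|>1}|y|^{\alpha-n}\,d\omega(y)<\infty$ (because $\omega\in\mathfrak M^+$), the integral $\int U^\nu\,d\omega$ is finite, so $f=-U^\omega\leqslant0$ is $\nu$-integrable and $\nu\in\breve{\mathcal E}^+_f(K)\subset\breve{\mathcal E}^+_f(A)$. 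Then ($\ast$) forces $\int U_f^\lambda\,d\nu\geqslant c_\lambda$, whereas $\int U_f^\lambda\,d\nu<c_\lambda$ because $\nu$ is concentrated on $K$ and $U_f^\lambda<c_\lambda$ there (split $K$ into a level set $\{U_f^\lambda\leqslant c_\lambda-1/m\}$ of positive $\nu$-measure and its complement); this contradiction gives $c_*(E)=0$, which is (\ref{1}).

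The remaining implications follow by integration, using that a measure of finite energy does not charge sets of inner capacity zero (it suffices to know this for compact sets of capacity zero, which is classical). If (\ref{1}) holds, then $U_f^\lambda\geqslant c_\lambda$ $\lambda$-a.e., and since $\int U_f^\lambda\,d\lambda=c_\lambda$ in fact $U_f^\lambda=c_\lambda$ $\lambda$-a.e.; integrating (\ref{1}) against an arbitrary $\mu\in\breve{\mathcal E}^+_f(A)$ yields ($\ast$), hence $\lambda=\lambda_{A,f}$ by the first step, and then $w_f(A)=I_f(\lambda)=c_\lambda+\int f\,d\lambda$, so $c_\lambda=w_f(A)-\int f\,d\lambda$, giving (\ref{2}) with equality $\lambda$-a.e.\ and (\ref{cc}). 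Conversely, if (\ref{2}) holds, integrating it against $\lambda$ gives $c_\lambda\leqslant w_f(A)-\int f\,d\lambda$, i.e.\ $I_f(\lambda)\leqslant w_f(A)$; since $I_f(\lambda)\geqslant w_f(A)$ always holds for $\lambda\in\breve{\mathcal E}^+_f(A)$, equality prevails, so $\lambda=\lambda_{A,f}$, (\ref{2}) becomes an equality $\lambda$-a.e., and (\ref{cc}) follows as above. I expect the main obstacle to lie in the capacity argument of the third paragraph, together with the more delicate bookkeeping caused by $A$ not being assumed closed~--- tracking inner versus outer capacity, the finiteness and well-definedness of $U_f^\lambda=U^\lambda-U^\omega$ quasi-everywhere, and the distinction between being ``concentrated on $A$'' and being ``supported in $\overline A$''~--- which is precisely what is carried out in \cite{Z5a}.
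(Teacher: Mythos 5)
Your argument is correct, but it is worth pointing out that the paper itself gives no proof of this theorem at all: it is simply imported from \cite{Z5a} (Theorems~1, 2 and Proposition~1 there), so your self-contained variational derivation is a genuine alternative to "cite the earlier paper". The route you take is the classical one, and each step checks out: the identity $I_f(\mu_t)-I_f(\lambda)=t^2\|\mu-\lambda\|^2+2t\bigl(\int U_f^\lambda\,d\mu-c_\lambda\bigr)$ is exact and yields both directions of the equivalence with $(\ast)$ plus uniqueness; the passage from $(\ast)$ to (\ref{1}) via a compact $K\subset E$ of positive capacity and the normalized equilibrium measure $\nu:=\gamma_K/c(K)$ works, and your verification that $\nu\in\breve{\mathcal E}^+_f(A)$ (boundedness of $U^\nu$ from Frostman's maximum principle, which is where $\alpha\leqslant2$ enters, together with $\int_{|y|>1}|y|^{\alpha-n}\,d\omega(y)<\infty$ and Tonelli) is exactly the point where the hypothesis $f=-U^\omega$ is used; and the closing integrations correctly exploit the fact that measures of finite energy do not charge sets of inner capacity zero. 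Two technical points that you flag but do not fully execute deserve one more sentence each in a polished write-up: first, the failure set of (\ref{1}) is $E$ together with the set where $U_f^\lambda=U^\lambda-U^\omega$ is undefined, and passing from positive inner capacity of that union to positive inner capacity of $E$ itself requires the strengthened countable subadditivity of inner capacity (Lemma~\ref{str-sub} of the paper), since inner capacity is not subadditive for arbitrary sets; second, the $\lambda$-measurability of $E$ (needed to conclude $\lambda(E)=0$) should be noted, which follows since $U_f^\lambda$ is Borel where defined and $A$ is $\lambda$-measurable. With those details supplied, your proof is complete and arguably more transparent than the reference the paper relies on.
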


\section{Main results}\label{sec-main}

$\bullet$ To avoid trivialities, we assume throughout this paper that
\begin{equation}\label{capnon0}
 c_*(A)>0.
\end{equation}
Then (and only then) $\mathcal E^+(A)$ is not reduced to $\{0\}$, cf.\ \cite[Lemma~2.3.1]{F1}, whence
\[\breve{\mathcal E}^+(A)\ne\varnothing.\]

$\bullet$ Unless explicitly stated otherwise, the following $(\mathcal H_1)$--$(\mathcal H_3)$ are required to hold:
\begin{itemize}
  \item[$(\mathcal H_1)$] The cone $\mathcal E^+(A)$ is closed in the strong topology on $\mathcal E^+$.
  \item[$(\mathcal H_2)$] The external field $f$ is given by means of formula (\ref{f}) with $\omega\in\mathfrak M^+$.
  \item[$(\mathcal H_3)$] $\omega^A$, the inner balayage of the above $\omega$ onto $A$, is of finite energy:
\begin{equation}\label{ob}
 \omega^A\in\mathcal E^+.
\end{equation}
\end{itemize}

Under these general conventions, which will usually not be repeated henceforth, we have the following useful Lemmas~\ref{lfin} and \ref{f-strcont}.

\begin{lemma}\label{lfin}The Gauss functional $I_f(\cdot)$ is representable in the form
\begin{equation}\label{If}
 I_f(\mu)=\|\mu-\omega^A\|^2-\|\omega^A\|^2\quad\text{for all $\mu\in\mathcal E^+(A)$}.
\end{equation}
Therefore,
\begin{equation}\label{Iff}\breve{\mathcal E}^+_f(A)=\breve{\mathcal E}^+(A),\end{equation}
hence
\begin{equation}\label{lf1'}
w_f(A)=\inf_{\mu\in\breve{\mathcal E}^+(A)}\,I_f(\mu),\end{equation}
and moreover
\begin{equation}\label{lf1}
-\infty<w_f(A)<\infty.
\end{equation}
\end{lemma}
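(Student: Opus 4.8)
The plan is to reduce the entire statement to the single mutual-energy identity
\[
I(\mu,\omega)=\langle\mu,\omega^A\rangle\quad\text{for every }\mu\in\mathcal E^+(A),
\]
and then read off (\ref{If})--(\ref{lf1}) from it. First I would expand both sides of (\ref{If}). By $(\mathcal H_2)$, $f=-U^\omega$ with $\omega\in\mathfrak M^+$, so Tonelli's theorem (all measures involved being positive) gives $I_f(\mu)=\|\mu\|^2+2\int f\,d\mu=\|\mu\|^2-2\int U^\omega\,d\mu=\|\mu\|^2-2I(\mu,\omega)$. On the other hand, since $\mu\in\mathcal E^+(A)\subset\mathcal E$ and $\omega^A\in\mathcal E^+\subset\mathcal E$ by $(\mathcal H_3)$, the usual expansion in the pre-Hilbert space $\mathcal E$ yields $\|\mu-\omega^A\|^2-\|\omega^A\|^2=\|\mu\|^2-2\langle\mu,\omega^A\rangle$. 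Comparing the two expressions, (\ref{If}) becomes equivalent to the displayed identity.

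To prove that identity I would invoke the theory of inner Riesz balayage (surveyed in Section~\ref{sec-baleq}): $U^{\omega^A}\leqslant U^\omega$ everywhere on $\mathbb R^n$, and $U^{\omega^A}=U^\omega$ n.e.\ on $A$. As $\mu\in\mathcal E^+(A)$ is concentrated on $A$ and has finite energy, it does not charge the (inner-capacity-zero) set of those $x\in A$ where this equality fails; hence $U^{\omega^A}=U^\omega$ $\mu$-a.e.\ on $\mathbb R^n$. Integrating with respect to $\mu$ and using Tonelli once more to identify $\int U^{\omega^A}\,d\mu=I(\mu,\omega^A)=\langle\mu,\omega^A\rangle$, we obtain $I(\mu,\omega)=\langle\mu,\omega^A\rangle$. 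By the Cauchy--Schwarz inequality, $|\langle\mu,\omega^A\rangle|\leqslant\|\mu\|\,\|\omega^A\|<\infty$, so this common value is a finite real number; in particular $\int f\,d\mu=-I(\mu,\omega)=-\langle\mu,\omega^A\rangle\in(-\infty,0]$, so $f$ is $\mu$-integrable for every $\mu\in\mathcal E^+(A)$. This establishes (\ref{If}), and, combined with the trivial inclusion $\breve{\mathcal E}^+_f(A)\subset\breve{\mathcal E}^+(A)$, it gives (\ref{Iff}); then (\ref{lf1'}) is immediate from the definition of $w_f(A)$.

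It remains to deduce (\ref{lf1}). From (\ref{If}) we have $I_f(\mu)\geqslant-\|\omega^A\|^2$ for every $\mu\in\breve{\mathcal E}^+(A)$, whence $w_f(A)\geqslant-\|\omega^A\|^2>-\infty$ by $(\mathcal H_3)$. Conversely, the standing assumption (\ref{capnon0}) ensures $\breve{\mathcal E}^+(A)\ne\varnothing$; fixing any $\mu_0$ in this class, (\ref{If}) gives $w_f(A)\leqslant I_f(\mu_0)=\|\mu_0-\omega^A\|^2-\|\omega^A\|^2<\infty$. Hence $-\infty<w_f(A)<\infty$.

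The only non-formal ingredient is the mutual-energy identity $I(\mu,\omega)=\langle\mu,\omega^A\rangle$, i.e.\ the fact that inner balayage onto $A$ preserves the scalar product with test measures carried by $A$; this is where the defining property of $\omega^A$ (coincidence of potentials n.e.\ on $A$) meets the absolute continuity of finite-energy measures with respect to inner capacity. Everything beyond that point is elementary manipulation in the pre-Hilbert space $\mathcal E$, so I do not anticipate any genuine obstacle.
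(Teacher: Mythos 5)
Your proof is correct and follows essentially the same route as the paper's: the identity $\int U^\omega\,d\mu=\langle\omega^A,\mu\rangle$ for $\mu\in\mathcal E^+(A)$, obtained from $U^{\omega^A}=U^\omega$ n.e.\ on $A$ (i.e.\ (\ref{ineq1}) with $\zeta:=\omega$) together with the fact that finite-energy measures do not charge sets of inner capacity zero, is exactly the paper's central step, and (\ref{If})--(\ref{lf1'}) and the lower bound $w_f(A)\geqslant-\|\omega^A\|^2$ then follow as you describe. The one place you genuinely diverge is the upper bound $w_f(A)<\infty$: the paper reduces it, via \cite[Lemma~5]{Z5a} and the strengthened countable subadditivity of inner capacity (Lemma~\ref{str-sub}), to the condition $c_*(\{x\in A:\ |f(x)|<\infty\})>0$, whereas you simply pick some $\mu_0\in\breve{\mathcal E}^+(A)$ (nonempty by (\ref{capnon0})) and observe that $I_f(\mu_0)=\|\mu_0-\omega^A\|^2-\|\omega^A\|^2$ is a finite real number. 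Your shortcut is legitimate precisely because you have already established (\ref{Iff}), i.e.\ that $f$ is $\mu$-integrable for \emph{every} $\mu\in\breve{\mathcal E}^+(A)$; it is both simpler and more self-contained than the paper's citation-based argument.
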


\begin{proof} As $U^{\omega^A}=U^\omega$ n.e.\ on $A$ (see (\ref{ineq1}) with $\zeta:=\omega$), the same equality is fulfilled $\mu$-a.e.\ on $\mathbb R^n$ for all $\mu\in\mathcal E^+(A)$.\footnote{Here we have utilized the fact, to be often useful in what follows, that any $\mu$-mea\-s\-ur\-ab\-le set $E\subset\mathbb R^n$ with $c_*(E)=0$ is $\mu$-negligible for any $\mu\in\mathcal E^+$, that is, $\mu^*(E)=0$ (cf.\ \cite[Lemma~3.5]{Z-Rarx}).\label{f-negl}} Therefore, for any $\mu\in\mathcal E^+(A)$,
\begin{align*}
 &\int U^\omega\,d\mu=\int U^{\omega^A}\,d\mu=\langle\omega^A,\mu\rangle\in[0,\infty),\\
 &I_f(\mu)=\|\mu\|^2+2\int f\,d\mu=\|\mu\|^2-2\langle\omega^A,\mu\rangle,
\end{align*}
whence (\ref{If})--(\ref{lf1'}), and consequently, by the strict positive definiteness of $\kappa_\alpha$,
\[w_f(A)\geqslant-\|\omega^A\|^2>-\infty.\]
By \cite[Lemma~5]{Z5a}, the remaining claim $w_f(A)<\infty$ is equivalent to the inequality
\[c_*\bigl(\{x\in A:\ |f(x)|<\infty\}\bigr)>0,\]
which indeed holds true by virtue of (\ref{capnon0}) and the fact that $f$ is finite q.e., hence n.e.\ on $\mathbb R^n$. (Here we have used a strengthened
version of countable subadditivity for inner capacity, provided by Lemma~\ref{str-sub} below.)
\end{proof}

\begin{lemma}\label{str-sub}
 For arbitrary $Q\subset\mathbb R^n$ and universally measurable $U_j\subset\mathbb R^n$,
\[c_*\Bigl(\bigcup_{j\in\mathbb N}\,Q\cap U_j\Bigr)\leqslant\sum_{j\in\mathbb N}\,c_*(Q\cap U_j).\]
\end{lemma}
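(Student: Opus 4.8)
The plan is to reduce the asserted strengthened countable subadditivity to the classical countable subadditivity of inner capacity on (universally measurable, hence capacitable) sets, by passing to a suitable family of capacitable sets sandwiched between the sets $Q\cap U_j$ and a capacitable set. Recall that for a capacitable set one has $c_*=c^*=c$, and that $c^*$ is countably subadditive over arbitrary subsets of $\mathbb R^n$ (see \cite[Section~II.2.6]{L}); the subtlety is that $Q$ itself need not be capacitable, so we cannot apply $c^*$-subadditivity to $Q\cap U_j$ directly and then identify $c^*$ with $c_*$.

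First I would fix the notation: write $E:=\bigcup_{j}Q\cap U_j$ and let $s:=\sum_j c_*(Q\cap U_j)$. If $s=\infty$ there is nothing to prove, so assume $s<\infty$. The key step is to invoke the inner regularity of $c_*$ with respect to compact (or capacitable) subsets: by definition of inner capacity, for each $j$ and each $\varepsilon>0$ there is a compact set $K_j\subset Q\cap U_j$ with $c(K_j)=c_*(K_j)\geqslant c_*(Q\cap U_j)-\varepsilon 2^{-j}$, or, better suited to our purposes, there is a compact $L_j\subset Q\cap U_j$ that nearly exhausts the inner capacity. Then I would want a capacitable set $V$ with $E\subset V$ and $c^*(V)$ close to $c_*(E)$ from above — but that direction is false in general. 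So instead the argument must run the other way: take an arbitrary compact $K\subset E$; since $K\subset\bigcup_j U_j$ and the $U_j$ are universally measurable, the sets $K\cap U_j$ are universally measurable, hence capacitable, and $K=\bigcup_j K\cap U_j$. Now apply countable subadditivity of $c=c_*=c^*$ on capacitable sets (or of $c^*$ on arbitrary sets, then identify with $c_*$ on the capacitable pieces): $c(K)=c_*(K)\leqslant\sum_j c_*(K\cap U_j)\leqslant\sum_j c_*(Q\cap U_j)=s$, where the last inequality uses $K\cap U_j\subset Q\cap U_j$ (valid because $K\subset E\subset Q$, so $K\cap U_j\subset Q\cap U_j$) together with monotonicity of inner capacity. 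Taking the supremum over all compact $K\subset E$ gives $c_*(E)\leqslant s$, which is the claim.

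The step I expect to be the main (though modest) obstacle is the justification that countable subadditivity is indeed available on the capacitable sets $K\cap U_j$ in the form $c_*(\bigcup_j K\cap U_j)\leqslant\sum_j c_*(K\cap U_j)$; this is standard — it follows from the countable subadditivity of the outer capacity $c^*$ on arbitrary sets, \cite[Section~II.2.6]{L}, applied to $K=\bigcup_j(K\cap U_j)$, combined with the capacitability (hence $c_*=c^*$) of each $K\cap U_j$ and of $K$ itself. One should also check that $K\subset E$ implies $K=\bigcup_j(K\cap U_j)$, i.e.\ that $K\subset\bigcup_j U_j$; this holds because $E=\bigcup_j(Q\cap U_j)\subset\bigcup_j U_j$. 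Finally, the passage to the supremum over compact $K\subset E$ is exactly the definition of $c_*(E)$, so no extra regularity hypothesis on $E$ is needed, which is the whole point of stating the lemma for an \emph{arbitrary} $Q$.
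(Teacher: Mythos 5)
Your overall strategy---reduce to a compact $K\subset\bigcup_j(Q\cap U_j)$, write $K=\bigcup_j(K\cap U_j)$, bound $c(K)$ by the sum, and take the supremum over $K$---is the right skeleton, and it matches the structure of the argument the paper points to (the paper itself gives no proof, only the citation to Fuglede \cite[pp.~157--158]{F1}). But there is a genuine gap at the step you dismiss as routine: you claim that each $K\cap U_j$ is \emph{capacitable} because it is universally measurable, and you need this to convert $c^*(K\cap U_j)$ into $c_*(K\cap U_j)$ after applying countable subadditivity of the outer capacity. Capacitability is not a consequence of universal measurability: the Choquet capacitability theorem (and \cite[Theorem~2.8]{L}, which is what this paper relies on) covers analytic sets, and the class of universally measurable sets is strictly larger; there is no standard theorem asserting that every universally measurable set is capacitable for the Riesz capacity, and this is exactly the reason the lemma is nontrivial and is stated for universally measurable $U_j$. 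Without capacitability of $K\cap U_j$ your chain only yields $c(K)\leqslant\sum_j c^*(K\cap U_j)$, and $c^*(K\cap U_j)$ cannot be compared with $c_*(Q\cap U_j)$ (the inclusion $K\cap U_j\subset Q\cap U_j$ gives an inequality between outer capacities, or between inner capacities, but not from an outer one to an inner one).

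The cited proof of Fuglede avoids this entirely and uses universal measurability in a different, weaker way: let $\lambda$ be the equilibrium measure of the compact set $K$, so that $\lambda(K)=c(K)$ and $U^\lambda\leqslant1$. Disjointify the $U_j$ into universally measurable sets $U_j'$; since each $U_j'$ is $\lambda$-measurable, $c(K)=\lambda(K)=\sum_j\lambda(K\cap U_j')$. By inner regularity of the Radon measure $\lambda$ on the $\lambda$-measurable set $K\cap U_j'$, and because $U^{\lambda|_{K'}}\leqslant U^\lambda\leqslant1$ for every compact $K'\subset K\cap U_j'$, one gets $\lambda(K\cap U_j')\leqslant c_*(K\cap U_j')\leqslant c_*(Q\cap U_j)$, and summing finishes the proof. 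So universal measurability is needed only to make the $U_j$ measurable for the single measure $\lambda$; no capacitability of the pieces is invoked. To repair your write-up you would have to either restrict the $U_j$ to analytic sets (weakening the lemma) or replace the outer-capacity subadditivity step by this equilibrium-measure argument.
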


\begin{proof}
See \cite[pp.~157--158]{F1} (for $\alpha=2$, cf.\ \cite[p.~253]{Ca2}); compare with \cite[p.~144]{L}.
\end{proof}

\begin{lemma}\label{f-strcont} The cone $\mathcal E^+(A)$ is complete in the induced strong topology, while the Gauss functional $I_f(\cdot)$ is strongly continuous on $\mathcal E^+(A)$. In more detail, if a net $(\mu_s)_{s\in S}\subset\mathcal E^+(A)$ is strong Cauchy, then there exists the unique $\mu_0\in\mathcal E^+(A)$ such that $\mu_s\to\mu_0$ strongly and vaguely as $s$ ranges through $S$, and moreover
\[\lim_{s\in S}\,I_f(\mu_s)=I_f(\mu_0).\]
\end{lemma}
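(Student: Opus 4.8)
The plan is to obtain both assertions directly from the perfectness of the Riesz kernel, from the standing hypothesis $(\mathcal H_1)$, and from the representation formula \eqref{If} already established in Lemma~\ref{lfin}. No genuinely new idea is needed; the work consists in putting these three ingredients together in the right order.

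For the completeness claim, I would start with a strong Cauchy net $(\mu_s)_{s\in S}\subset\mathcal E^+(A)$. Since $\mathcal E^+(A)\subset\mathcal E^+$ and the Riesz kernel $\kappa_\alpha$ is perfect, this net is strong Cauchy in the \emph{complete} cone $\mathcal E^+$; hence, as recalled in Section~\ref{sec-alpha}, there is a unique $\mu_0\in\mathcal E^+$ to which $(\mu_s)$ converges both strongly and vaguely, the strong topology on $\mathcal E$ and the vague topology on $\mathfrak M$ being Hausdorff. The remaining point is that $\mu_0$ is actually concentrated on $A$, i.e.\ $\mu_0\in\mathcal E^+(A)$, and this is exactly where $(\mathcal H_1)$ is used: $\mathcal E^+(A)$ is strongly closed in $\mathcal E^+$, so the strong limit $\mu_0$ of a net lying in $\mathcal E^+(A)$ belongs to $\mathcal E^+(A)$. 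Uniqueness of $\mu_0$ within $\mathcal E^+(A)$ is inherited from its uniqueness within $\mathcal E^+$.

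For the strong continuity of $I_f(\cdot)$, I would invoke \eqref{If}: for every $\mu\in\mathcal E^+(A)$ one has $I_f(\mu)=\|\mu-\omega^A\|^2-\|\omega^A\|^2$, where $\omega^A\in\mathcal E^+$ by $(\mathcal H_3)$. The map $\mu\mapsto\|\mu-\omega^A\|$ is $1$-Lipschitz on $\mathcal E$ for the energy norm (triangle inequality), hence $\mu\mapsto\|\mu-\omega^A\|^2$, and therefore $I_f(\cdot)$, is strongly continuous on $\mathcal E^+(A)$. Applying this to the net above, $\mu_s\to\mu_0$ strongly yields $\|\mu_s-\omega^A\|\to\|\mu_0-\omega^A\|$, whence $\lim_{s\in S}I_f(\mu_s)=I_f(\mu_0)$, which is the final assertion.

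I do not expect a real obstacle here. The only step that is not purely formal is recognizing that $(\mathcal H_1)$ is precisely the property needed to ensure that the strong (equivalently, vague) limit of measures concentrated on $A$ remains concentrated on $A$; for an arbitrary set $A$ this may fail, which is why the strong closedness of $\mathcal E^+(A)$ is imposed as a standing convention rather than proved. Everything else reduces to Lemma~\ref{lfin} and the elementary continuity of $t\mapsto t^2$ composed with a Lipschitz norm functional.
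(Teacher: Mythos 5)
Your proposal is correct and follows essentially the same route as the paper: completeness of $\mathcal E^+(A)$ is obtained from the perfectness of $\kappa_\alpha$ (strong completeness of $\mathcal E^+$ together with strong--vague convergence to a unique limit) combined with the strong closedness postulated in $(\mathcal H_1)$, and the strong continuity of $I_f(\cdot)$ is read off from the representation (\ref{If}) via continuity of the energy norm. No gaps.
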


\begin{proof} Being a strongly closed subcone of the strongly complete cone $\mathcal E^+$, see $(\mathcal H_1)$, $\mathcal E^+(A)$ must be strongly complete. Therefore, a strong Cauchy net $(\mu_s)_{s\in S}\subset\mathcal E^+(A)$ must converge to some (unique) $\mu_0\in\mathcal E^+(A)$ strongly, hence also vaguely, the Riesz kernel being perfect (Section~\ref{sec-alpha}). Since the norm $\|\cdot\|$ is strongly continuous on $\mathcal E$, the strong continuity of $I_f(\cdot)$ on $\mathcal E^+(A)$ follows directly from representation (\ref{If}).
\end{proof}

\begin{remark}\label{f-quasi} As shown in \cite[Theorem~3.9]{Z-Rarx}, $(\mathcal H_1)$ is fulfilled, for instance, if $A$ is {\it quasiclosed} ({\it quasicompact}), that is, if $A$ can be approximated in outer capacity by closed (compact) sets (Fuglede  \cite[Definition~2.1]{F71}).\end{remark}

\begin{remark}\label{rem-pc}
Assumption $(\mathcal H_3)$ is satisfied, for example, if
\begin{equation}\label{pc}
\omega=\omega_0:=\tau+\sigma,
\end{equation}
where $\tau\in\mathcal E^+$, while $\sigma\in\mathfrak M^+$ is bounded and meets the separation condition
\begin{equation*}\inf_{(x,y)\in S(\sigma)\times A}\,|x-y|>0\end{equation*}
(see \cite[Section~4.6]{Z-Rarx}). For some other occurrences of (\ref{ob}) see Section~\ref{sec-ap} below.
\end{remark}

\subsection{On the solvability of Problem~\ref{pr}}\label{sec-solv} Due to (\ref{lf1}), Problem~\ref{pr} on the existence of $\lambda_{A,f}$, minimizing $I_f(\cdot)$ over $\breve{\mathcal E}^+_f(A)$ (equivalently, over $\breve{\mathcal E}^+(A)$~--- see (\ref{Iff})), makes sense, and moreover such a minimizer is unique (if it exists). Necessary and sufficient conditions for $\lambda_{A,f}$ to exist are provided by the following Theorem~\ref{th-main}.

\begin{theorem}\label{th-main}
For $\lambda_{A,f}$ to exist, it is necessary and sufficient that either
\begin{equation}\label{capf}
 c_*(A)<\infty,
\end{equation}
or
\begin{equation}\label{balineq}
\omega^A(\mathbb R^n)\geqslant1.
\end{equation}
Furthermore, if $\lambda_{A,f}$ exists, then
\begin{align}
c_{A,f}&<0\quad\text{if\/ $\omega^A(\mathbb R^n)>1$},\label{C2'}\\
c_{A,f}&=0\quad\text{if\/ $\omega^A(\mathbb R^n)=1$},\label{C1'}\\
c_{A,f}&>0\quad\text{if\/ $\omega^A(\mathbb R^n)<1$},\label{C3'}
\end{align}
$c_{A,f}$ being the inner $f$-weighted equilibrium constant {\rm(see Theorem~\ref{th-ch2})}.
\end{theorem}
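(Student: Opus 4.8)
The plan is to reduce everything to the geometry of the convex set $\breve{\mathcal E}^+(A)$ inside the pre-Hilbert space $\mathcal E$, using the identity $I_f(\mu)=\|\mu-\omega^A\|^2-\|\omega^A\|^2$ from Lemma~\ref{lfin}. Minimizing $I_f$ over $\breve{\mathcal E}^+(A)$ is therefore equivalent to finding the metric projection of the fixed element $\omega^A$ onto the (convex) set $\breve{\mathcal E}^+(A)$; call the (candidate) projection $\lambda_{A,f}$. Since $\mathcal E^+(A)$ is strongly complete by $(\mathcal H_1)$/Lemma~\ref{f-strcont}, the only obstruction to the existence of this projection is that the affine "hyperplane" $\{\mu(\mathbb R^n)=1\}$ need not be closed in the strong topology when $c_*(A)=\infty$: a strong Cauchy minimizing net $(\mu_s)\subset\breve{\mathcal E}^+(A)$ converges strongly and vaguely to some $\mu_0\in\mathcal E^+(A)$, but vague convergence only gives $\mu_0(\mathbb R^n)\leqslant1$, so the limit may have lost mass "at infinity."

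First I would carry out the \emph{sufficiency} direction. If $c_*(A)<\infty$, then by general potential theory the inner equilibrium measure $\gamma_A$ of $A$ exists, has finite energy, finite mass $\gamma_A(\mathbb R^n)=c_*(A)<\infty$, and constant potential $U^{\gamma_A}=1$ n.e.\ on $A$; one can then show that any minimizing net stays in a strongly bounded, strongly closed subset on which the mass functional behaves well, or more directly that total mass is "controlled" because $\|\mu\|^2\geqslant\mu(\mathbb R^n)^2/c_*(A)$ for $\mu\in\mathcal E^+(A)$, forcing the limit $\mu_0$ to retain full mass $1$; hence $\mu_0\in\breve{\mathcal E}^+(A)$ is the minimizer. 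If instead $\omega^A(\mathbb R^n)\geqslant1$, the point $\omega^A$ itself (suitably truncated/renormalized) already lies on the correct side, and a short convexity argument shows the minimizing net cannot leak mass: intuitively, pushing mass to infinity strictly increases $\|\mu-\omega^A\|$ once $\omega^A$ has enough mass, so again $\mu_0(\mathbb R^n)=1$. The clean way is to invoke the balayage characterization — one checks that the measure whose existence we want is a balayage-type object and use that $\omega^A$ witnesses feasibility.

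Next, \emph{necessity}: assume $c_*(A)=\infty$ and $\omega^A(\mathbb R^n)<1$; I would construct an explicit minimizing net that provably has no limit in $\breve{\mathcal E}^+(A)$. Since $c_*(A)=\infty$, there are compact subsets $K_s\subset A$ with $c_*(K_s)\to\infty$; the equilibrium measures $\gamma_{K_s}$ have $\|\gamma_{K_s}\|^2=c_*(K_s)\to\infty$ but can be normalized to mass $1$, so $\gamma_{K_s}/c_*(K_s)\to0$ strongly. Combining a small amount of such spread-out mass with the bulk near $\omega^A$, one builds $\mu_s\in\breve{\mathcal E}^+(A)$ with $I_f(\mu_s)\to\|\omega^A\|^2\cdot(\text{something})-\|\omega^A\|^2$ matching the infimum $w_f(A)$, while $\mu_s$ converges vaguely to a measure of mass exactly $\omega^A(\mathbb R^n)<1$. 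The key computation is identifying $w_f(A)$ explicitly in this regime — I expect $w_f(A)=-\|\omega^A\|^2$ is \emph{not} attained, i.e.\ the infimum equals the value "at $\mu=\omega^A$ restricted to $A$" but that restriction has deficient mass. Showing the limit genuinely fails to be admissible (not merely that this particular net fails) uses uniqueness of the minimizer together with Theorem~\ref{th-ch2}: if $\lambda_{A,f}$ existed it would satisfy $U_f^{\lambda_{A,f}}=c_{A,f}$ $\lambda_{A,f}$-a.e.\ and $\geqslant c_{A,f}$ n.e.\ on $A$, and one derives a contradiction with $\omega^A(\mathbb R^n)<1$ and $c_*(A)=\infty$ by integrating against equilibrium-type measures on large compacta.

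Finally, the \emph{sign of $c_{A,f}$}. Once $\lambda_{A,f}$ exists, Theorem~\ref{th-ch2} gives $U^{\lambda_{A,f}}+f=c_{A,f}$ $\lambda_{A,f}$-a.e.\ and $\geqslant c_{A,f}$ n.e.\ on $A$, i.e.\ $U^{\lambda_{A,f}}\geqslant U^{\omega^A}+c_{A,f}$ n.e.\ on $A$ with equality $\lambda_{A,f}$-a.e. I would integrate this inequality/equality against $\lambda_{A,f}$ and against $\omega^A$ (legitimate since everything has finite energy and n.e.\ sets are negligible for finite-energy measures, see the footnote after Lemma~\ref{lfin}), obtaining $\|\lambda_{A,f}\|^2=\langle\omega^A,\lambda_{A,f}\rangle+c_{A,f}$ and $\langle\lambda_{A,f},\omega^A\rangle\geqslant \|\omega^A\|^2+c_{A,f}\,\omega^A(\mathbb R^n)$ (using that $\omega^A$ is concentrated on $A$ up to an inner-capacity-zero set). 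Subtracting and using the Cauchy–Schwarz/positive-definiteness relation $\|\lambda_{A,f}-\omega^A\|^2\geqslant0$ yields $c_{A,f}\bigl(1-\omega^A(\mathbb R^n)\bigr)\leqslant\|\lambda_{A,f}-\omega^A\|^2$ with the reverse-type control forcing the sign of $c_{A,f}$ to be opposite to that of $\omega^A(\mathbb R^n)-1$; the boundary case $\omega^A(\mathbb R^n)=1$ pins $c_{A,f}=0$ exactly because then $\lambda_{A,f}$ and $\omega^A$ have equal mass and the equilibrium relations collapse to $\lambda_{A,f}=\omega^A$ (when $\omega^A\in\breve{\mathcal E}^+(A)$).

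\textbf{Main obstacle.} The genuinely delicate point is the necessity half: proving that when $c_*(A)=\infty$ and $\omega^A(\mathbb R^n)<1$ the infimum is \emph{not attained}. This requires an exact evaluation of $w_f(A)$ in that regime and a rigorous argument that \emph{no} admissible measure achieves it — not just that one natural minimizing sequence escapes. I expect this to hinge on a careful interplay between the inner equilibrium measures of exhausting compacta $K_s\subset A$, the balayage identity $U^{\omega^A}=U^\omega$ n.e.\ on $A$, and the rigidity supplied by uniqueness in Theorem~\ref{th-ch2}; the technical heart is controlling the mass that an optimal measure would be forced to place "near infinity" on the unbounded, infinite-capacity set $A$.
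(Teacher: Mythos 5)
The critical gap is in your sufficiency argument for $\omega^A(\mathbb R^n)\geqslant1$. The case $\omega^A(\mathbb R^n)=1$ is in fact easy and should be split off: by Theorem~\ref{l-oo'} (equivalently, by the projection characterization (i$_1$) combined with $(\omega^A)^A=\omega^A$), $\omega^A$ minimizes $I_f$ over the whole cone $\mathcal E^+(A)$, so when it happens to have unit mass it solves the constrained problem and $\lambda_{A,f}=\omega^A$, $c_{A,f}=0$. But for $\omega^A(\mathbb R^n)>1$ your "short convexity argument" that "pushing mass to infinity strictly increases $\|\mu-\omega^A\|$" is precisely the assertion that needs proof, and I do not see how to obtain it from convexity alone. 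The paper's route is substantially heavier: let $\xi$ be the common strong (and vague) limit of all minimizing nets; one first establishes the variational inequality $U^\xi_f\geqslant C_\xi:=\int U^\xi_f\,d\xi$ n.e.\ on $A$ (by solving the problem on compacta $K\uparrow A$ and passing to the limit along a subsequence of weighted potentials). If $C_\xi\geqslant0$ this gives $U^\xi\geqslant U^{\omega^A}$ n.e.\ on $A$, and the principle of positivity of mass for finite-energy measures concentrated on $A$ forces $\xi(\mathbb R^n)\geqslant\omega^A(\mathbb R^n)>1$, contradicting $\xi(\mathbb R^n)\leqslant1$; hence $C_\xi<0$, and integrating the same inequality against $\xi$ yields $C_\xi\geqslant C_\xi\,\xi(\mathbb R^n)$, whence $\xi(\mathbb R^n)=1$. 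Both external inputs~--- the first-variation inequality for the possibly sub-unit-mass limit $\xi$, and the mass-comparison principle~--- are absent from your outline, and without something of that strength the case $\omega^A(\mathbb R^n)>1$ does not close. A smaller gap of the same nature occurs for $c_*(A)<\infty$: the inequality $\|\mu\|^2\geqslant\mu(\mathbb R^n)^2/c_*(A)$ only bounds mass from above by energy and cannot prevent mass loss in the vague limit; what is actually needed is that $\mu\mapsto\mu(\mathbb R^n)=\langle\gamma_A,\mu\rangle$ is strongly continuous on $\mathcal E^+(A)$ (using $U^{\gamma_A}=1$ n.e.\ on $A$ and $\gamma_A\in\mathcal E^+$), i.e.\ that $\breve{\mathcal E}^+(A)$ is strongly closed~--- which is how the paper argues.

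Two further remarks. Your "main obstacle" in the necessity half is not actually an obstacle: since every minimizing net converges strongly and vaguely to the same extremal measure $\xi$ (convexity, the parallelogram law, and completeness of $\mathcal E^+(A)$), and a solution, if it existed, would coincide with $\xi$, it suffices to exhibit one minimizing sequence whose vague limit has mass strictly less than $1$; the paper does exactly this with $\mu_j=\omega^A+\bigl(1-\omega^A(\mathbb R^n)\bigr)\tau_j$, where $\tau_j$ has unit mass, energy less than $1/j$, and support in $\{|x|\geqslant j\}$~--- essentially your construction. Finally, your sign analysis is correct in substance and is arguably neater than the paper's treatment of (\ref{C3'}) (which routes through the representation $\lambda_{A,f}=\omega^A+c_{A,f}\gamma_A$ of Theorem~\ref{th-main1}); note only that your displayed inequality is reversed: the two integrations give
\[
0\leqslant\|\lambda_{A,f}-\omega^A\|^2\leqslant c_{A,f}\bigl(1-\omega^A(\mathbb R^n)\bigr),
\]
from which all three sign statements, and the identification $\lambda_{A,f}=\omega^A$ when $\omega^A(\mathbb R^n)=1$, follow at once.
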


\begin{corollary}\label{cornoth}
If $A$ is not inner $\alpha$-thin at infinity,\footnote{For the concept of inner $\alpha$-thinness of a set at infinity, see \cite{KM,Z-bal2} (cf.\ also Definition~\ref{def-thin} below).} then
\[\lambda_{A,f}\text{\ exists}\iff\omega(\mathbb R^n)\geqslant1.\]
\end{corollary}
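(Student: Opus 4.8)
The plan is to deduce Corollary~\ref{cornoth} from Theorem~\ref{th-main} together with two facts about inner balayage: first, that the total mass $\omega^A(\mathbb R^n)$ is monotone and ``nearly conservative'' under balayage, and second, that mass is exactly conserved precisely when $A$ is not inner $\alpha$-thin at infinity. More precisely, I would first recall from the theory of inner Riesz balayage (the survey in Section~\ref{sec-baleq}, based on \cite{Z-bal}--\cite{Z-arx-22}) that always $\omega^A(\mathbb R^n)\leqslant\omega(\mathbb R^n)$, and that equality $\omega^A(\mathbb R^n)=\omega(\mathbb R^n)$ holds for \emph{every} $\omega\in\mathfrak M^+$ if and only if $A$ is not inner $\alpha$-thin at infinity (this is the characterization in \cite{KM,Z-bal2}; cf.\ Definition~\ref{def-thin}). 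Under the hypothesis of the corollary we therefore have the identity $\omega^A(\mathbb R^n)=\omega(\mathbb R^n)$.

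Next I would observe that when $A$ is not inner $\alpha$-thin at infinity, it automatically has infinite inner capacity, $c_*(A)=\infty$; indeed a set of finite inner capacity is inner $\alpha$-thin at infinity (this is standard, e.g.\ it follows from the Wiener-type criterion for thinness at infinity, or directly from the definition via the Kelvin transform). Consequently condition~(\ref{capf}) of Theorem~\ref{th-main} fails, and the existence of $\lambda_{A,f}$ is governed \emph{solely} by condition~(\ref{balineq}), i.e.\ by whether $\omega^A(\mathbb R^n)\geqslant1$.

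Combining the two observations finishes the argument: $\lambda_{A,f}$ exists $\iff\omega^A(\mathbb R^n)\geqslant1\iff\omega(\mathbb R^n)\geqslant1$, the first equivalence by Theorem~\ref{th-main} (since (\ref{capf}) is impossible) and the second by mass conservation of balayage onto a set that is not inner $\alpha$-thin at infinity. This is exactly the assertion of the corollary.

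The only genuine subtlety — and the step I would flag as the main point to get right — is the interplay between the two dichotomies in Theorem~\ref{th-main}. One must be sure that ``not inner $\alpha$-thin at infinity'' really does force $c_*(A)=\infty$, so that the escape route~(\ref{capf}) is genuinely closed; otherwise the clean biconditional would break down. Everything else is a direct substitution of the mass-conservation identity for balayage into the statement of Theorem~\ref{th-main}. No new estimates are needed, and no use of the finite-energy hypothesis $(\mathcal H_3)$ beyond what Theorem~\ref{th-main} already presupposes; the corollary is purely a specialization.
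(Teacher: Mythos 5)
Your proposal is correct and follows essentially the same route as the paper: specialize Theorem~\ref{th-main} using the mass-conservation property $\omega^A(\mathbb R^n)=\omega(\mathbb R^n)$, which by Theorem~\ref{th-eq}(vi$_3$) characterizes sets that are not inner $\alpha$-thin at infinity. You are right to flag that one must also note $c_*(A)=\infty$ (immediate from (iii$_3$), since $c_*(A)<\infty$ forces the Wiener-type series to converge) so that the alternative (\ref{capf}) cannot rescue existence when $\omega(\mathbb R^n)<1$; the paper's one-line proof leaves this step implicit, but it is needed for the ``only if'' direction.
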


\begin{proof}
  This follows directly from Theorem~\ref{th-main} by use of the fact that, if $A$ is not inner $\alpha$-thin at infinity, then $\mu^A(\mathbb R^n)=\mu(\mathbb R^n)$ for all $\mu\in\mathfrak M^+$, see Theorem~\ref{th-eq}(vi$_3$).
  \end{proof}

\begin{corollary}\label{cornoth''}If $A$ is quasicompact, then $\lambda_{A,f}$ does exist.\end{corollary}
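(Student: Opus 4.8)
The plan is to deduce Corollary~\ref{cornoth''} from Theorem~\ref{th-main} by showing that for a quasicompact set $A$ the dichotomy in that theorem is automatically resolved: namely, that either $c_*(A)<\infty$ holds, or else the total mass condition \eqref{balineq} is forced. So the first step is to record the relevant basic fact about quasicompact sets: a quasicompact $A$ can be approximated in outer capacity by \emph{compact} sets $K_j\subset A$ (cf.\ Fuglede \cite[Definition~2.1]{F71} and Remark~\ref{f-quasi}), hence in particular $c_*(A)=\sup_j c(K_j)\leqslant\sup_j c_*(\overline A)$. If this supremum is finite, we are in case \eqref{capf} and $\lambda_{A,f}$ exists; so from now on we may assume $c_*(A)=\infty$.

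The heart of the argument is then to show that $c_*(A)=\infty$ together with quasicompactness of $A$ implies $\omega^A(\mathbb R^n)\geqslant1$. The clean way to see this is via the inner equilibrium measure: since a quasicompact set is in particular quasiclosed, $(\mathcal H_1)$ holds by Remark~\ref{f-quasi}, so the inner equilibrium measure $\gamma_A$ of $A$ is well defined, and $c_*(A)=\|\gamma_A\|^2=\gamma_A(\mathbb R^n)$. When $c_*(A)=\infty$ one should instead use the characterization (from the theory surveyed in Section~\ref{sec-baleq}, to be referenced precisely, e.g.\ Theorem~\ref{th-eq}) that $c_*(A)=\infty$ is equivalent to $A$ \emph{not} being inner $\alpha$-thin at infinity — indeed a set of infinite inner capacity cannot be inner $\alpha$-thin at infinity. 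By Theorem~\ref{th-eq}(vi$_3$) (as already invoked in the proof of Corollary~\ref{cornoth}), if $A$ is not inner $\alpha$-thin at infinity then $\mu^A(\mathbb R^n)=\mu(\mathbb R^n)$ for every $\mu\in\mathfrak M^+$; applying this with $\mu=\omega$ gives $\omega^A(\mathbb R^n)=\omega(\mathbb R^n)$. Now I need $\omega(\mathbb R^n)\geqslant1$, and this is exactly where quasicompactness rather than mere quasiclosedness is used: a bounded (finite total mass) is not guaranteed, so one must argue that when $A$ is quasicompact, $c_*(A)=\infty$ actually cannot happen — a quasicompact set, being "essentially" a countable union of compact sets of finite capacity plus a set of outer capacity zero, has finite inner capacity. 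Thus the assumption $c_*(A)=\infty$ is vacuous for quasicompact $A$, and we are always in case \eqref{capf}.

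So the cleanest route is actually shorter than the above: show directly that \textbf{every quasicompact set has finite inner capacity}, whence \eqref{capf} holds and Theorem~\ref{th-main} immediately yields the existence of $\lambda_{A,f}$. To prove $c_*(A)<\infty$: by definition of quasicompactness, for each $\varepsilon>0$ there is a compact $K_\varepsilon\subset\mathbb R^n$ with $c^*(A\setminus K_\varepsilon)<\varepsilon$; taking $\varepsilon=1$ we get $c_*(A)\leqslant c_*(A\cap K_1)+c^*(A\setminus K_1)\leqslant c(K_1)+1<\infty$ by subadditivity (Lemma~\ref{str-sub}) and the finiteness of the capacity of a compact set. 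This completes the proof. The main obstacle is bookkeeping: making sure the subadditivity estimate is applied with the right measurability hypotheses (here $K_1$ is compact hence Borel, and $A\cap K_1$, $A\setminus K_1$ are handled via monotonicity of $c_*$ and $c^*$), and citing the correct statement that compact sets have finite Riesz capacity (e.g.\ \cite[Section~II.2.6]{L}); neither is deep, so the corollary is genuinely immediate once one notices that quasicompactness rules out infinite inner capacity.
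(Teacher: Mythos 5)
Your final argument is exactly the paper's: quasicompactness yields a compact $K$ with $c^*(A\setminus K)<1$, whence $c_*(A)\leqslant c^*(A)\leqslant c(K)+1<\infty$, so case (\ref{capf}) of Theorem~\ref{th-main} applies immediately. One caveat about the detour you discard: the claim that $c_*(A)=\infty$ is equivalent to $A$ not being inner $\alpha$-thin at infinity is false (only the implication $c_*(A)<\infty\Rightarrow$ inner $\alpha$-thin at infinity holds; a set can satisfy (\ref{iii}) while having infinite inner capacity), but since you abandon that route in favor of the direct capacity bound, the proof stands.
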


\begin{proof}This is implied by Theorem~\ref{th-main}, for, being approximated in outer capacity by compact sets, a quasicompact set must be of finite outer (hence, inner) capacity.\end{proof}

\begin{corollary}\label{cornoth'}
If $c_*(A)=\infty$ and $\omega(\mathbb R^n)<1$, then $\lambda_{A,f}$ fails to exist.\end{corollary}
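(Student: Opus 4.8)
The plan is to deduce the statement directly from Theorem~\ref{th-main} by checking that neither of the two alternatives appearing there can hold under the present hypotheses. The first of them, inequality (\ref{capf}) asserting $c_*(A)<\infty$, is ruled out at once, since by assumption $c_*(A)=\infty$. Hence it only remains to verify that the second alternative (\ref{balineq}) fails too, i.e.\ that $\omega^A(\mathbb R^n)<1$.

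For this I would appeal to the basic inequality
\[\omega^A(\mathbb R^n)\leqslant\omega(\mathbb R^n),\]
valid for the inner Riesz balayage of an arbitrary $\omega\in\mathfrak M^+$ onto an arbitrary $A\subset\mathbb R^n$ --- a standard ingredient of the balayage theory surveyed in Section~\ref{sec-baleq} (cf.\ \cite{Z-bal,Z-bal2}, and Theorem~\ref{th-eq}). Together with the hypothesis $\omega(\mathbb R^n)<1$ it yields $\omega^A(\mathbb R^n)<1$, so (\ref{balineq}) indeed cannot be satisfied. As both (\ref{capf}) and (\ref{balineq}) fail, the necessity part of Theorem~\ref{th-main} shows that the minimizer $\lambda_{A,f}$ cannot exist.

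Equivalently, one may argue by contraposition: were $\lambda_{A,f}$ to exist, Theorem~\ref{th-main} would force $c_*(A)<\infty$ or $\omega^A(\mathbb R^n)\geqslant1$, and both possibilities are excluded here. There is no genuine obstacle in this argument; its entire substance is the reduction to Theorem~\ref{th-main} combined with the fact that balayage does not increase total mass, and no further estimates are needed.
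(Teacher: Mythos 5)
Your argument is correct and is essentially identical to the paper's own proof: both deduce the claim from Theorem~\ref{th-main} together with the principle of positivity of mass, $\omega^A(\mathbb R^n)\leqslant\omega(\mathbb R^n)$ (Section~\ref{some}, item (b)), which combined with $\omega(\mathbb R^n)<1$ rules out (\ref{balineq}) while the hypothesis $c_*(A)=\infty$ rules out (\ref{capf}). No gaps; only a minor quibble that the mass inequality is item (b) of Section~\ref{some} rather than part of Theorem~\ref{th-eq}.
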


\begin{proof} We deduce this at once from Theorem~\ref{th-main} by noting that $\mu^A(\mathbb R^n)\leqslant\mu(\mathbb R^n)$ for all $\mu\in\mathfrak M^+$ (see Section~\ref{some}, (b) with $\zeta:=\mu$).\end{proof}

\subsection{Alternative characterizations of $\lambda_{A,f}$}\label{sec-alt} Assume $\lambda_{A,f}$  exists (see Section~\ref{sec-solv} for necessary and/or sufficient conditions for this to hold). Then, by (\ref{1}) and (\ref{cc}),
\begin{equation*}
 \lambda_{A,f}\in\Lambda_{A,f},
\end{equation*}
where
\begin{equation}\label{gamma}
\Lambda_{A,f}:=\bigl\{\mu\in\mathfrak M^+: \ U^\mu_f\geqslant c_{A,f}\quad\text{n.e.\ on\ $A$}\bigr\}.
\end{equation}

\begin{theorem}\label{th-main1} If moreover
\begin{equation}\label{eqq}
\omega^A(\mathbb R^n)\leqslant1,
\end{equation}
then $\lambda_{A,f}$ admits the representation
\begin{equation}\label{RRR}\lambda_{A,f}=\left\{
\begin{array}{cl}\omega^A+c_{A,f}\gamma_A&\text{if \ $c_*(A)<\infty$},\\
\omega^A&\text{otherwise},\\ \end{array} \right.
\end{equation}
where $\gamma_A$ denotes the inner equilibrium measure on $A$, normalized by $\gamma_A(\mathbb R^n)=c_*(A)$. This $\lambda_{A,f}$ can alternatively be characterized by any one of the following {\rm(i)}--{\rm(iii)}.
\begin{itemize}
\item[{\rm(i)}] $\lambda_{A,f}$ is the unique measure in the class $\Lambda_{A,f}$ of minimum potential, i.e.\footnote{This implies immediately that
$\lambda_{A,f}$ can also be characterized as the unique measure in the class $\Lambda_{A,f}$ of minimum $f$-we\-i\-g\-h\-t\-ed potential~--- now, however, qua\-si-ev\-ery\-where on $\mathbb R^n$:
\begin{equation*} U^{\lambda_{A,f}}_f=\min_{\mu\in\Lambda_{A,f}}\,U^\mu_f\quad\text{q.e.\ on\ $\mathbb R^n$}.\end{equation*}
This relation as well as (\ref{minpot}) and (\ref{minen}) would remain valid if $\mu$ in (\ref{gamma}) were of finite energy.}
\begin{equation}\label{minpot} U^{\lambda_{A,f}}=\min_{\mu\in\Lambda_{A,f}}\,U^\mu\quad\text{on\ $\mathbb R^n$}.\end{equation}
\item[{\rm(ii)}] $\lambda_{A,f}$ is the unique measure in the class $\Lambda_{A,f}$ of minimum energy, i.e.
\begin{equation}\label{minen}I(\lambda_{A,f})=\min_{\mu\in\Lambda_{A,f}}\,I(\mu).\end{equation}
\item[{\rm(iii)}] $\lambda_{A,f}$ is the unique measure in the class $\mathcal E^+(A)$ having the property
\begin{equation*}
U^{\lambda_{A,f}}_f=c_{A,f}\quad\text{n.e.\ on $A$}.
\end{equation*}
Furthermore,
\begin{align}\label{C1}
  c_{A,f}&\geqslant0\quad\text{if\/ $c_*(A)<\infty$},\\
  c_{A,f}&=0\quad\text{otherwise}.\label{C2}
\end{align}
\end{itemize}
\end{theorem}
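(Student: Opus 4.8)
The plan is to derive all the assertions from the characterization of the minimizer given in Theorem~\ref{th-ch2}, combined with the standard identities for the inner balayage and the inner equilibrium measure: $U^{\omega^A}=U^\omega$ n.e.\ on $A$ and $U^{\omega^A}\le U^\omega$ on $\mathbb R^n$; and, when $c_*(A)<\infty$, $\gamma_A\in\mathcal E^+(A)$, $\gamma_A(\mathbb R^n)=c_*(A)$, $U^{\gamma_A}=1$ n.e.\ on $A$, $U^{\gamma_A}\le1$ on $\mathbb R^n$. A recurring tool will be that every relation valid n.e.\ on $A$ holds $\mu$-a.e.\ for each $\mu\in\mathcal E^+$ (footnote~\ref{f-negl}), together with the q.e.-finiteness of potentials of finite-energy measures.

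First I would prove the representation (\ref{RRR}) together with (\ref{C1})--(\ref{C2}) by producing a candidate and checking Theorem~\ref{th-ch2}. Suppose $\omega^A(\mathbb R^n)=1$; by Theorem~\ref{th-main} this holds automatically whenever $c_*(A)=\infty$, since existence of $\lambda_{A,f}$ then forces $\omega^A(\mathbb R^n)\ge1$, hence $=1$ by (\ref{eqq}). Here $\omega^A\in\breve{\mathcal E}^+(A)$, and $U^{\omega^A}_f=U^{\omega^A}-U^\omega$ vanishes n.e.\ on $A$, so $\int U^{\omega^A}_f\,d\omega^A=0$ while $U^{\omega^A}_f\ge0$ n.e.\ on $A$; Theorem~\ref{th-ch2} then gives $\lambda_{A,f}=\omega^A$ and $c_{A,f}=0$. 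If instead $\omega^A(\mathbb R^n)<1$, then Theorem~\ref{th-main} forces $c_*(A)<\infty$; put $c:=(1-\omega^A(\mathbb R^n))/c_*(A)>0$ and $\nu:=\omega^A+c\,\gamma_A\in\mathcal E^+(A)$. Then $\nu(\mathbb R^n)=1$, and n.e.\ on $A$ one computes $U^\nu_f=(U^{\omega^A}-U^\omega)+c\,U^{\gamma_A}=c$, so $\int U^\nu_f\,d\nu=c$ and $U^\nu_f\ge c$ n.e.\ on $A$; Theorem~\ref{th-ch2} yields $\lambda_{A,f}=\nu$ and $c_{A,f}=c>0$. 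Together these two cases give (\ref{RRR}), (\ref{C1}), (\ref{C2}).

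With the representation in hand, (iii) follows quickly: the minimizer lies in $\mathcal E^+(A)$ and satisfies $U^{\lambda_{A,f}}_f=c_{A,f}$ n.e.\ on $A$ by the above, while if $\mu\in\mathcal E^+(A)$ has the same property then $U^\mu=U^{\lambda_{A,f}}$ ($=c_{A,f}+U^\omega$) n.e.\ on $A$, and writing
\[\|\mu-\lambda_{A,f}\|^2=\int(U^\mu-U^{\lambda_{A,f}})\,d\mu-\int(U^\mu-U^{\lambda_{A,f}})\,d\lambda_{A,f}\]
with an integrand that vanishes both $\mu$-a.e.\ and $\lambda_{A,f}$-a.e.\ gives $\mu=\lambda_{A,f}$ by the perfectness of $\kappa_\alpha$. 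For (ii) and (i), fix $\mu\in\Lambda_{A,f}$. For (ii) one may assume $\mu\in\mathcal E^+$ (else $I(\mu)=\infty>I(\lambda_{A,f})$), and since $U^\mu\ge c_{A,f}+U^\omega$ $\lambda_{A,f}$-a.e.\ while $U^{\lambda_{A,f}}=c_{A,f}+U^\omega$ $\lambda_{A,f}$-a.e.\ (equality in (\ref{2})), one gets $\langle\mu,\lambda_{A,f}\rangle\ge\int(c_{A,f}+U^\omega)\,d\lambda_{A,f}=\int U^{\lambda_{A,f}}\,d\lambda_{A,f}=\|\lambda_{A,f}\|^2$, whence
\[I(\mu)-I(\lambda_{A,f})=\|\mu-\lambda_{A,f}\|^2+2\bigl(\langle\mu,\lambda_{A,f}\rangle-\|\lambda_{A,f}\|^2\bigr)\ge\|\mu-\lambda_{A,f}\|^2\ge0,\]
with equality only for $\mu=\lambda_{A,f}$; this is (\ref{minen}). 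For (i), the inequality $U^{\lambda_{A,f}}\le U^\mu$ holds $\lambda_{A,f}$-a.e.\ by the same computation, and since $\lambda_{A,f}\in\mathcal E^+$ the Riesz domination principle upgrades it to hold everywhere on $\mathbb R^n$, which is (\ref{minpot}); uniqueness is then immediate (any other minimizer of the potential in $\Lambda_{A,f}$ has the same potential as $\lambda_{A,f}$, hence equals it), and the $f$-weighted reformulation in the footnote follows by subtracting the q.e.-finite function $U^\omega$.

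I do not expect a genuine obstacle here: the proof is essentially an assembly of Theorem~\ref{th-ch2} with the elementary identities for $\omega^A$ and $\gamma_A$. The points deserving care are the appeal to Theorem~\ref{th-main} to ensure $c_*(A)<\infty$ (hence the availability of $\gamma_A$ and the sign $c_{A,f}\ge0$) precisely in the regime $\omega^A(\mathbb R^n)<1$; the disciplined passage from ``n.e.\ on $A$'' to ``$\mu$-a.e.''\ for $\mu\in\mathcal E^+$, on which several integrations silently rely; and the one non-formal ingredient, the Riesz domination principle, invoked in (i) to turn a $\lambda_{A,f}$-a.e.\ potential inequality into one valid at every point of $\mathbb R^n$.
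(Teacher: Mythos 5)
Your proposal is correct. The core of your argument --- producing the candidate $\omega^A$ (when $\omega^A(\mathbb R^n)=1$) or $\omega^A+c\,\gamma_A$ with $c=(1-\omega^A(\mathbb R^n))/c_*(A)$ (when $c_*(A)<\infty$), and certifying it via Theorem~\ref{th-ch2} together with $U^{\omega^A}=U^\omega$ and $U^{\gamma_A}=1$ n.e.\ on $A$ --- is exactly the paper's direct proof of (\ref{RRR}), (\ref{C1}), (\ref{C2}). Where you diverge is in (i)--(iii): the paper first observes that $\Lambda_{A,f}=\Gamma_{A,\lambda_{A,f}}$ and that $\lambda_{A,f}=(\lambda_{A,f})^A$ (using $(\omega^A)^A=\omega^A$ and $(\gamma_A)^A=\gamma_A$), and then simply quotes the characterizations of the inner balayage as the element of $\Gamma_{A,\zeta}$ of minimal potential (Theorem~\ref{th-bal2}(i$_2$)), of minimal energy (Theorem~\ref{th-bal1}(iii$_1$)), and the n.e.\ uniqueness within $\mathcal E'(A)$ (Theorem~\ref{th-bal1}(ii$_1$)). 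You instead unpack these: a quadratic expansion $I(\mu)-I(\lambda)=\|\mu-\lambda\|^2+2(\langle\mu,\lambda\rangle-\|\lambda\|^2)$ for (ii), the Maria--Frostman domination principle (valid for $\alpha\leqslant2$ and $\lambda_{A,f}\in\mathcal E^+$) for (i), and the energy identity for (iii). Both routes are sound; the paper's buys brevity and makes the structural link to balayage theory explicit, while yours is self-contained at the cost of invoking the domination principle by hand --- which is in fact the same ingredient hidden inside the proof of Theorem~\ref{th-bal2}(i$_2$). The only points to keep disciplined, which you flag yourself, are the passage from ``n.e.\ on $A$'' to ``$\mu$-a.e.''\ for $\mu\in\mathcal E^+(A)$ and the q.e.-finiteness of $U^\omega$ needed to subtract potentials without $\infty-\infty$ ambiguities.
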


\subsection{A description of the support of $\lambda_{A,f}$}\label{sec-descrrr} In the following Theorem~\ref{th-main3}, a set $A$ is assumed to be closed,\footnote{For closed $A$, $(\mathcal H_1)$ is necessarily fulfilled \cite[Theorem~3.9]{Z-Rarx}, and hence can be dropped.} and such that for every $x\in A$ and every neighborhood $U_x$ of $x$ in $\mathbb R^n$, we have $c_*(A\cap U_x)>0$; such $A$ is said to coincide with its {\it reduced kernel}, cf.\ \cite[p.~164]{L}. For the sake of simplicity of formulation, in Theorem~\ref{th-main3} we also assume that $A^c$ is connected unless $\alpha<2$. Denote $\partial A:=\partial_{\mathbb R^n}A$.

\begin{theorem}\label{th-main3}Under the hypotheses of Theorem~{\rm\ref{th-main1}}, assume moreover that
\[\omega\ne\omega|_{A_R}\quad\text{and}\quad\omega|_{A_R}\in\mathcal E^+,\] where $A_R$ denotes the set of all $\alpha$-reg\-ul\-ar points of $A$. Then
\begin{equation}\label{RRR'}S(\lambda_{A,f})=\left\{
\begin{array}{cl}A&\text{if \ $\alpha<2$},\\
S(\omega|_{A_R})\cup\partial A&\text{otherwise}.\\ \end{array} \right.
\end{equation}
\end{theorem}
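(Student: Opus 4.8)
The plan is to read off $S(\lambda_{A,f})$ from the explicit formula~(\ref{RRR}) for $\lambda_{A,f}$, combined with the known descriptions of the supports of inner Riesz balayage and of inner equilibrium measures recalled in Section~\ref{sec-baleq} (see also \cite{Z-bal}--\cite{Z-arx-22}).

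Write $\lambda:=\lambda_{A,f}$. As we are under the hypotheses of Theorem~\ref{th-main1}, the representation~(\ref{RRR}) holds; writing it uniformly as $\lambda=\omega^A+c_{A,f}\gamma_A$, with the summand $c_{A,f}\gamma_A$ interpreted as the zero measure when $c_*(A)=\infty$ (which is consistent with~(\ref{C2})), and using that the support of a sum of two positive measures is the union of their supports, we obtain
\[S(\lambda)=S(\omega^A)\cup S(c_{A,f}\gamma_A)\subseteq A,\]
the inclusion being immediate since both summands are concentrated on the closed set $A$. So it suffices to determine $S(\omega^A)$ and, when $c_{A,f}>0$ (which by~(\ref{C2}) forces $c_*(A)<\infty$, so that $\gamma_A$ is bounded), also $S(\gamma_A)$. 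For the balayage I would first separate the part of $\omega$ left fixed by sweeping: since $\omega|_{A_R}\in\mathcal E^+$ is concentrated on $A_R\subseteq A$, one has $(\omega|_{A_R})^A=\omega|_{A_R}$, so by additivity of inner balayage $\omega^A=\omega|_{A_R}+\nu^A$, where $\nu:=\omega-\omega|_{A_R}\in\mathfrak M^+$ is concentrated on $\mathbb R^n\setminus A_R$ and satisfies $\nu\ne0$ by the hypothesis $\omega\ne\omega|_{A_R}$. Hence
\[S(\omega^A)=S(\omega|_{A_R})\cup S(\nu^A),\]
and everything reduces to computing $S(\nu^A)$ (and $S(\gamma_A)$).

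At this point I would apply the support theorems for inner balayage and inner equilibrium measures relative to a set coinciding with its reduced kernel. Since $\nu\ne0$, its potential $U^\nu$ is strictly positive on $\mathbb R^n$, hence $\nu^A\ne0$. If $\alpha<2$, the inner balayage onto $A$ of any nonzero measure has support equal to the whole of $A$ (because $A$, being its own reduced kernel, has positive inner capacity in every neighborhood of each of its points, and for $\alpha<2$ mass can be swept directly onto any such neighborhood); consequently $S(\nu^A)=A$, whence $S(\omega^A)=A$ and finally $S(\lambda)=A$, regardless of whether the equilibrium term occurs. If $\alpha=2$, then --- here the standing assumption that $A^c$ is connected enters --- the inner balayage $\nu^A$ of the measure $\nu$ concentrated on $\mathbb R^n\setminus A_R$ has support exactly $\partial A$, and likewise $S(\gamma_A)=\partial A$; therefore $S(\omega^A)=S(\omega|_{A_R})\cup\partial A$, and, the possibly present term $c_{A,f}\gamma_A$ contributing only the subset $\partial A$, we conclude $S(\lambda)=S(\omega|_{A_R})\cup\partial A$, which is~(\ref{RRR'}).

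The heart of the proof, and the step deserving the most care, is exactly the dichotomy between these two balayage-support statements: for $\alpha<2$ the swept-out part $\nu^A$ spreads over all of $A$, whereas for $\alpha=2$ it is carried by the Euclidean boundary $\partial A$ alone. This is the potential-theoretic manifestation of the difference between the discontinuous (jump) rotation-invariant $\alpha$-stable process and Brownian motion: for $\alpha<2$ positive mass reaches any subset of $A$ of positive inner capacity, so --- $A$ coinciding with its reduced kernel --- no neighborhood of a point of $A$ stays uncharged; for $\alpha=2$ the swept mass necessarily piles up on $\partial A$, and connectedness of $A^c$ ensures that every boundary point lies in the support. Some attention is also needed to check that the part of $\nu$ possibly carried by the polar set $A\setminus A_R$ of $\alpha$-irregular points of $A$ (which is contained in $\partial A$, interior points of $A$ being $\alpha$-regular) is swept in the same manner, and to accommodate unbounded $A$; but the reduction via~(\ref{RRR}) and the splitting $\omega^A=\omega|_{A_R}+\nu^A$ are otherwise routine.
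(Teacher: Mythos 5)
Your proposal is correct and follows essentially the same route as the paper: split $\omega=\omega|_{A_R}+\omega|_{A_R^c}$, note that the first part is fixed by inner balayage while the second sweeps onto $A$ (for $\alpha<2$) or $\partial A$ (for $\alpha=2$, where the paper invokes \cite[Theorems~4.1, 5.1]{Z-bal2} via the representation $\zeta^A=\int\varepsilon_y^A\,d\zeta(y)$), and absorb the equilibrium term using the known description of $S(\gamma_A)$. One small caution: your intermediate claim that for $\alpha<2$ the balayage of \emph{any} nonzero measure has support all of $A$ should be stated only for measures concentrated on $A_R^c$ (as you in fact use it for $\nu$), since a nonzero measure of finite energy already carried by $A_R$ is its own balayage.
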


Regarding the concept of $\alpha$-reg\-ul\-ar points for Borel $A$, see e.g.\ \cite[Section~V.1.2]{L} (cf.\ \cite[Section~6]{Z-bal}, where $A$ is arbitrary; see also Section~\ref{some} below). Also note that $A_R$ is Borel measurable \cite[Theorem~5.2]{Z-bal2}, and hence the trace $\omega|_{A_R}$ is well defined.

\begin{remark}
Let $n\geqslant3$, $A:=\overline{\mathbb R^{n-1}}$,
$\mathbb R^{n-1}$ being imbedded in $\mathbb R^n$, and let $f:=-U^{\varepsilon_x}$, where $\varepsilon_x$ is the unit Dirac measure at $x\in\mathbb R^n\setminus\overline{\mathbb R^{n-1}}$. As stated in \cite[Section~4.2, case~(i)]{Dr0}, $S(\lambda_{A,f})=\overline{\mathbb R^{n-1}}$. This result is correct and agrees with (\ref{RRR'}), for ${\rm Int}_{\mathbb R^n}\overline{\mathbb R^{n-1}}=\varnothing$, and hence our Remark~2.21 in \cite{Z-Rarx} is false.
\end{remark}

In applications, it is often useful to know whether for unbounded $A$, $S(\lambda_{A,f})$ might be compact. Some conditions ensuring this, can be derived from Theorem~\ref{th-main3}.
See also  Theorem~\ref{th-main2} and Corollary~\ref{th-main2'}, cf.\ Theorem~\ref{shrpp} and Remarks~\ref{shap}, \ref{shap'}.

\begin{theorem}\label{th-main2}Assume that $\omega^A(\mathbb R^n)>1$,\footnote{According to Theorem~\ref{th-main}, $\lambda_{A,f}$ then does exist, and moreover $c_{A,f}<0$. The same is applicable to Corollary~\ref{th-main2'}, which is again seen from Theorem~\ref{th-main}, now by making use of Theorem~\ref{th-eq}(vi$_3$).} and that
\begin{equation}\label{L}
\lim_{|x|\to\infty, \ x\in A}\,U^\omega(x)=:L\text{ \ exists}.
\end{equation}
If moreover $L=0$,\footnote{This obviously holds true if $\omega$ is of compact support.} then $\lambda_{A,f}$ is of compact support.
\end{theorem}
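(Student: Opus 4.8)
The plan is to argue by contradiction, combining the characteristic properties of $\lambda_{A,f}$ supplied by Theorem~\ref{th-ch2} with the nonnegativity of Riesz potentials. By Theorem~\ref{th-main} — specifically (\ref{C2'}), cf.\ also the footnote to the statement — the hypothesis $\omega^A(\mathbb R^n)>1$ forces $\lambda:=\lambda_{A,f}$ to exist and $c_{A,f}<0$. By Theorem~\ref{th-ch2} (equality in (\ref{2}) together with (\ref{cc})) the $f$-weighted potential satisfies $U_f^\lambda=c_{A,f}$ $\lambda$-a.e.\ on $\mathbb R^n$; since $U_f^\lambda=U^\lambda+f=U^\lambda-U^\omega$, this reads
\[U^\lambda=U^\omega+c_{A,f}\quad\text{$\lambda$-a.e.\ on $\mathbb R^n$}.\]
One should also keep in mind that $\lambda(A^c)=0$, because $\lambda\in\mathcal E^+(A)$, and that $U^\lambda\geqslant0$ on $\mathbb R^n$, because $\kappa_\alpha\geqslant0$ and $\lambda\geqslant0$.

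Now suppose, contrary to the claim, that $S(\lambda)$ is not compact. Being closed, it is then unbounded, hence $\lambda(\{x:\ |x|>k\})>0$ for every $k\in\mathbb N$ — otherwise the open set $\{x:\ |x|>k\}$ would be disjoint from $S(\lambda)$. Consequently, for each $k$ the set $\{x\in A:\ |x|>k\}$ carries positive $\lambda$-mass, and after removing from it the $\lambda$-negligible set on which the displayed identity fails, a set of positive $\lambda$-measure — in particular a nonempty one — remains; choose $x_k$ in it. Then $x_k\in A$, $|x_k|\to\infty$, and $U^\lambda(x_k)=U^\omega(x_k)+c_{A,f}$ for every $k$. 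By hypothesis (\ref{L}) with $L=0$ we have $U^\omega(x_k)\to0$, so for all sufficiently large $k$ we have $U^\omega(x_k)<-c_{A,f}$ (and in particular $U^\omega(x_k)<\infty$), whence $U^\lambda(x_k)=U^\omega(x_k)+c_{A,f}<0$, contradicting $U^\lambda\geqslant0$. Therefore $S(\lambda)$ must be bounded, and being closed, it is compact.

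The argument is short, and the only point demanding some care is the passage from the $\lambda$-a.e.\ identity for $U_f^\lambda$, and from the mere inclusion $S(\lambda)\subset\overline A$, to the existence of genuine points of $A$ tending to infinity at which $U^\lambda=U^\omega+c_{A,f}$ holds \emph{pointwise}; this is what the elementary measure bookkeeping in the second paragraph takes care of. Note that Theorem~\ref{th-main1} cannot be invoked here, since it presupposes $\omega^A(\mathbb R^n)\leqslant1$, contrary to the present hypothesis; it is precisely the combination of Theorem~\ref{th-ch2} with the positivity of $U^\lambda$ that makes the regime $\omega^A(\mathbb R^n)>1$ tractable.
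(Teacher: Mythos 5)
Your argument is correct and is essentially the paper's own proof: both assume $S(\lambda_{A,f})$ noncompact, use Theorem~\ref{th-main} to get $c_{A,f}<0$, invoke the $\lambda_{A,f}$-a.e.\ identity $U^{\lambda_{A,f}}_f=c_{A,f}$ from Theorem~\ref{th-ch2} to produce points $x_j\in A$ with $|x_j|\to\infty$ and $U^\omega(x_j)=U^{\lambda_{A,f}}(x_j)-c_{A,f}\geqslant-c_{A,f}>0$, and derive a contradiction with $L=0$. Your extra bookkeeping about extracting genuine points of $A$ from the positive-mass sets is a careful (and welcome) elaboration of a step the paper leaves implicit.
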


\begin{remark}\label{shap}Theorem~\ref{th-main2} is sharp in the sense that it would fail in general if $\omega^A(\mathbb R^n)>1$ were replaced by $\omega^A(\mathbb R^n)\leqslant1$. Indeed, take $A$ as indicated at the beginning of this subsection, and suppose additionally that $A$ is unbounded, whereas $c_*(A)<\infty$ (such $A$ does exist, see \cite[Section~V.2.8]{L}). Then for $\omega:=q\varepsilon_x$, where $x\in A^c$ and $q\in(0,1]$, $\lambda_{A,f}$ does exist (Theorem~\ref{th-main}). Furthermore, $\omega^A(\mathbb R^n)\leqslant\omega(\mathbb R^n)\leqslant1$ (see Section~\ref{some}, (b) with $\zeta:=\omega$), and hence $S(\lambda_{A,f})$ is noncompact (Theorem~\ref{th-main3}).\end{remark}

\begin{corollary}\label{th-main2'}Assume that $\omega(\mathbb R^n)>1$, and that {\rm(\ref{L})} holds. Then $\lambda_{A,f}$ is of compact support whenever $A$ is not inner $\alpha$-thin at infinity.\end{corollary}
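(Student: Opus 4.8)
The goal is Corollary~\ref{th-main2'}: if $\omega(\mathbb R^n)>1$ and the limit $L$ in (\ref{L}) exists, then $\lambda_{A,f}$ is of compact support whenever $A$ is not inner $\alpha$-thin at infinity. The plan is to reduce this statement to Theorem~\ref{th-main2}, whose hypotheses are $\omega^A(\mathbb R^n)>1$ and the existence of the limit $L$. The latter is assumed verbatim, so the only real task is to convert the assumption $\omega(\mathbb R^n)>1$ into the conclusion $\omega^A(\mathbb R^n)>1$, and for that we invoke the structural fact about inner balayage onto sets that are not inner $\alpha$-thin at infinity.

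Concretely, first I would recall from Theorem~\ref{th-eq}(vi$_3$) (quoted already in the proof of Corollary~\ref{cornoth}) that if $A$ is not inner $\alpha$-thin at infinity, then the inner balayage preserves total mass: $\mu^A(\mathbb R^n)=\mu(\mathbb R^n)$ for every $\mu\in\mathfrak M^+$. Applying this with $\mu:=\omega$ gives $\omega^A(\mathbb R^n)=\omega(\mathbb R^n)>1$. Thus the first hypothesis of Theorem~\ref{th-main2} is met. Second, the hypothesis (\ref{L}) — existence of $L:=\lim_{|x|\to\infty,\,x\in A}U^\omega(x)$ — is exactly what we have assumed, so it carries over directly. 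At this point Theorem~\ref{th-main2} applies and yields that $\lambda_{A,f}$ is of compact support, which is the desired conclusion.

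One should also note in passing that the remaining standing hypotheses behind Theorem~\ref{th-main2} are automatically available: $\lambda_{A,f}$ exists (by Theorem~\ref{th-main}, since $\omega^A(\mathbb R^n)>1$ forces (\ref{balineq})), and $(\mathcal H_1)$–$(\mathcal H_3)$ are in force by the blanket conventions of Section~\ref{sec-main} — in particular $(\mathcal H_3)$, i.e.\ $\omega^A\in\mathcal E^+$, is assumed throughout. So no extra verification is needed beyond the mass-preservation step.

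There is essentially no obstacle here: the corollary is a one-line deduction from Theorem~\ref{th-main2} together with the mass-invariance property (vi$_3$) of Theorem~\ref{th-eq}. The only point requiring the slightest care is making sure $\omega$ is admissible as an input to that mass-invariance statement, i.e.\ $\omega\in\mathfrak M^+$ with $U^\omega\not\equiv+\infty$; but this is already guaranteed by $(\mathcal H_2)$ and the standing convention that potentials of positive measures under consideration are not identically infinite. Hence the proof is immediate.

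\begin{proof}
Since $A$ is not inner $\alpha$-thin at infinity, the inner balayage preserves total mass, i.e.\ $\mu^A(\mathbb R^n)=\mu(\mathbb R^n)$ for every $\mu\in\mathfrak M^+$ (see Theorem~\ref{th-eq}(vi$_3$)). Applying this with $\mu:=\omega$ gives $\omega^A(\mathbb R^n)=\omega(\mathbb R^n)>1$. Together with the assumed existence of the limit $L$ in (\ref{L}), this shows that the hypotheses of Theorem~\ref{th-main2} are fulfilled, and hence $\lambda_{A,f}$ is of compact support.
\end{proof}
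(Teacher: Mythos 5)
There is a genuine gap. Theorem~\ref{th-main2} does not conclude compactness of $S(\lambda_{A,f})$ from the two hypotheses $\omega^A(\mathbb R^n)>1$ and the existence of the limit in (\ref{L}) alone: it carries the additional hypothesis ``if moreover $L=0$''. Corollary~\ref{th-main2'} deliberately drops that hypothesis and replaces it by the assumption that $A$ is not inner $\alpha$-thin at infinity, so you cannot simply assert that ``the hypotheses of Theorem~\ref{th-main2} are fulfilled'' after upgrading $\omega(\mathbb R^n)>1$ to $\omega^A(\mathbb R^n)>1$. Your mass-preservation step via Theorem~\ref{th-eq}(vi$_3$) is correct and is exactly the first step of the paper's proof, but as written your argument never addresses the value of $L$, which is where the real content of the corollary lies.

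The missing piece is, in effect, the implication ``$A$ not inner $\alpha$-thin at infinity $\Longrightarrow L=0$''. The paper handles this by rerunning the contradiction argument of Section~\ref{sec-prpr}: if $S(\lambda_{A,f})$ were noncompact, then, since $c_{A,f}<0$ and $U^{\lambda_{A,f}}_f=c_{A,f}$ holds $\lambda_{A,f}$-a.e., one finds points $x_j\in A$ with $|x_j|\to\infty$ and $U^\omega(x_j)\geqslant-c_{A,f}>0$, whence $L>0$; but $L>0$ is incompatible with Theorem~\ref{th-eq}(ii$_3$), because $U^\omega$ is strictly positive and lower semicontinuous, so $L>0$ forces $\essinf_{x\in A}U^\omega(x)>0$, i.e.\ $\omega$ itself witnesses (ii$_3$) and $A$ would be inner $\alpha$-thin at infinity --- a contradiction. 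If you prefer your reduction-to-Theorem~\ref{th-main2} route, you must first prove $L=0$ by exactly this (ii$_3$) argument and only then invoke the theorem; without that step the proof does not go through.
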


\begin{remark}\label{shap'}Corollary~\ref{th-main2'} is sharp in the sense that it would fail in general if either $\omega(\mathbb R^n)=1$, or if $A$ were inner $\alpha$-thin at infinity. For the former claim, take $A$ as indicated at the beginning of this subsection, and suppose that $\partial A$ is unbounded. If $\omega(\mathbb R^n)=1$, while $A$ is not $\alpha$-thin at infinity, then $\omega^A(\mathbb R^n)=1$ (Theorem~\ref{th-eq}(vi$_3$)), whence $\lambda_{A,f}$ does exist (Theorem~\ref{th-main}), but has noncompact support (Theorem~\ref{th-main3}). For the latter claim, see the following Theorem~\ref{shrpp}.

\begin{theorem}\label{shrpp}
Let $A$ be as indicated at the beginning of this subsection, and let $A$ be $\alpha$-thin at infinity. Then there are $x_j\in A^c$ and $q_j\in[1,\infty)$, $j\in\mathbb N$, such that
\[\lim_{j\to\infty}\,|x_j|=\lim_{j\to\infty}\,q_j=\infty,\]
and moreover all the supports $S(\lambda_{A,f_j})$, where $f_j:=-q_jU^{\varepsilon_{x_j}}$, are noncompact.
\end{theorem}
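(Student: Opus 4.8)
The plan is to construct the required counterexample explicitly, exploiting the connection between $\alpha$-thinness of $A$ at infinity and the behaviour of $\omega^A(\mathbb R^n)$ under inversion. First I would recall that, by Definition~\ref{def-thin} and the inversion machinery behind inner balayage (the Kelvin transform relating $A$ near infinity to its inverse $A^*_z$ near $z$), the hypothesis that $A$ is $\alpha$-thin at infinity means precisely that for a suitable center $z$ the inverted set $A^*_z$ is $\alpha$-thin at $z$; in particular there exist points $x_j\in A^c$ accumulating at infinity such that the inner $\alpha$-harmonic measure $\varepsilon_{x_j}^A(\mathbb R^n)=(\varepsilon_{x_j})^A(\mathbb R^n)$ is bounded away from $1$, say $\varepsilon_{x_j}^A(\mathbb R^n)\leqslant 1-\delta$ for some $\delta>0$ (this is exactly the defect that $\alpha$-thinness produces, cf.\ Theorem~\ref{th-eq}). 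One also needs $\varepsilon_{x_j}^A\in\mathcal E^+$, which holds because $x_j\in A^c$ and the separation condition of Remark~\ref{rem-pc} applies to $\omega:=q_j\varepsilon_{x_j}$.

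Next I would choose the masses $q_j$. Since $(\varepsilon_{x_j})^A(\mathbb R^n)\leqslant 1-\delta$, scaling gives $\omega_j^A(\mathbb R^n)=q_j\,(\varepsilon_{x_j})^A(\mathbb R^n)$, and the key point is that one can pick $q_j\in[1,\infty)$ with $q_j\to\infty$ while keeping $\omega_j^A(\mathbb R^n)\leqslant 1$: indeed it suffices to take $q_j:=1/(\varepsilon_{x_j})^A(\mathbb R^n)$ when that quantity is $\leqslant 1$ (which it is, being $\leqslant 1-\delta<1$), and this forces $q_j\geqslant 1/(1-\delta)>1$; passing to a subsequence along which $(\varepsilon_{x_j})^A(\mathbb R^n)\to 0$ — possible because $A$ is $\alpha$-thin at infinity, so the harmonic measure mass tends to $0$ as $x_j\to\infty$ — yields $q_j\to\infty$. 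With these choices, $\omega_j:=q_j\varepsilon_{x_j}$ satisfies $\omega_j^A\in\mathcal E^+$, $\omega_j^A(\mathbb R^n)\leqslant 1$, and $|x_j|\to\infty$, $q_j\to\infty$.

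Then I would invoke the already-established results. By Theorem~\ref{th-main}, since $c_*(A)<\infty$ (here $A$ is the set at the beginning of the subsection, which by the cited \cite[Section~V.2.8]{L} can be taken of finite capacity — actually for the present theorem $A$ need only satisfy the standing hypotheses, but finiteness of $c_*(A)$ is what makes $\lambda_{A,f_j}$ exist even though $\omega_j^A(\mathbb R^n)<1$), the minimizer $\lambda_{A,f_j}$ exists, with $c_{A,f_j}>0$ by \eqref{C3'}. By Theorem~\ref{th-main1}, $\lambda_{A,f_j}=\omega_j^A+c_{A,f_j}\gamma_A$. Now I apply Theorem~\ref{th-main3}: under its hypotheses $S(\lambda_{A,f_j})$ contains $\partial A$ (in the case $\alpha=2$) or equals $A$ (in the case $\alpha<2$); either way, because $A$ — hence $\partial A$ — is unbounded ($\alpha$-thinness at infinity is compatible with, indeed here forces, unboundedness, and one may arrange $\partial A$ unbounded as in Remark~\ref{shap'}), the support $S(\lambda_{A,f_j})$ is noncompact. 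One must check the extra hypotheses of Theorem~\ref{th-main3} for $\omega_j$, namely $\omega_j\ne\omega_j|_{A_R}$ and $\omega_j|_{A_R}\in\mathcal E^+$: both are immediate since $\omega_j=q_j\varepsilon_{x_j}$ with $x_j\in A^c$, so $\omega_j|_{A_R}=0\in\mathcal E^+$ and $\omega_j\ne 0$.

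The main obstacle I anticipate is the quantitative control in the second paragraph: one must ensure simultaneously that $q_j\to\infty$ and $\omega_j^A(\mathbb R^n)\leqslant 1$, which requires knowing not merely that $A$ is $\alpha$-thin at infinity but that this thinness can be ``localized'' to produce a sequence $x_j\to\infty$ along which $(\varepsilon_{x_j})^A(\mathbb R^n)\to 0$. This should follow from the characterization of $\alpha$-thinness at infinity via the divergence of a Wiener-type integral together with the inversion identity $(\varepsilon_{x_j})^A(\mathbb R^n)=U^{(A^*_z)^{\text{eq}}}(\cdot)$-type formulas relating harmonic measure to the equilibrium potential of the inverted set near $z$ — the technical heart being that $A^*_z$ has $z$ as a point of thinness (in fact one can even arrange $c_*(A^*_z)<\infty$, the ``very thin'' situation of the abstract), so that equilibrium potentials of the relevant pieces of $A^*_z$ tend to $0$ at $z$, and after inverting back this gives $(\varepsilon_{x_j})^A(\mathbb R^n)\to 0$. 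Everything else is a direct citation of Theorems~\ref{th-main}, \ref{th-main1}, \ref{th-main3} and the balayage facts in Section~\ref{some}.
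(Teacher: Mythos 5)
Your construction is, at its core, the same as the paper's: pick $x_j\in A^c$ with $|x_j|\to\infty$ and $\varepsilon_{x_j}^A(\mathbb R^n)\to0$, set $q_j:=1/\varepsilon_{x_j}^A(\mathbb R^n)$, and read off the support of the resulting minimizer. But the step you yourself flag as the ``main obstacle''~--- producing such a sequence~--- is precisely the step you do not prove, and your sketch of it (inversion, thinness of $A^*_z$ at $z$, ``one can even arrange $c_*(A^*_z)<\infty$'') is off track: nothing may be ``arranged'', since $A$ is given, and $\alpha$-thinness at infinity does not yield $c_*(A^*_z)<\infty$ (that is the strictly stronger ultrathinness of Theorem~\ref{th-ult}). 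The paper closes this gap without any inversion: by \cite[Theorem~2.2]{Z-bal2}, $\varepsilon_x^A(\mathbb R^n)=U^{\gamma_A}(x)$ for all $x$, where $\gamma_A$ exists precisely because $A$ is $\alpha$-thin at infinity; since each set $A^c\cap\{|x|>r\}$ is \emph{not} $\alpha$-thin at infinity, Theorem~\ref{th-eq}(ii$_3$) forces the essential infimum of $U^{\gamma_A}$ over it to vanish, whence $\liminf_{A^c\ni x\to\infty}U^{\gamma_A}(x)=0$ and the desired sequence exists. Note also that your parenthetical claim that the harmonic-measure mass ``tends to $0$ as $x_j\to\infty$'' is false as a limit statement~--- only the liminf along $A^c$ vanishes, which is why a sequence must be extracted.

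Second, your deductions after choosing $q_j$ are internally inconsistent. With $q_j=1/\varepsilon_{x_j}^A(\mathbb R^n)$ you get $\omega_j^A(\mathbb R^n)=1$ exactly, so existence of $\lambda_{A,f_j}$ follows from Theorem~\ref{equality} (or from \eqref{balineq}); you instead invoke $c_*(A)<\infty$, which is neither a hypothesis nor a consequence of $\alpha$-thinness at infinity (the series \eqref{iii} can converge while $\sum_jc_*(A_j)=\infty$), and you then cite \eqref{C3'} to conclude $c_{A,f_j}>0$, whereas \eqref{C1'} gives $c_{A,f_j}=0$ and hence simply $\lambda_{A,f_j}=\omega_j^A=q_j\varepsilon_{x_j}^A$. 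The endgame (support equal to $A$ or $\partial A$ via \eqref{Dm}, noncompact because $\partial A$ is unbounded in the setting of Remark~\ref{shap'}) is fine. So: right construction, but the analytic heart is missing, and the bookkeeping around Theorems~\ref{th-main} and \ref{th-main1} needs to be redone for the case $\omega_j^A(\mathbb R^n)=1$ that your own choice of $q_j$ produces.
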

\end{remark}

\subsection{On the novelty of the above results}\label{novelty} All the results of this section are largely new. However, if $\omega=\omega_0$ is of form (\ref{pc}), then, under some additional requirements on the external field $-U^{\omega_0}$, Theorem~\ref{th-main1} and Corollaries~\ref{cornoth}--\ref{cornoth'}, \ref{th-main2'} were given in \cite{Z-Rarx}.
Also, if $A$ is closed and not $\alpha$-thin at infinity, while $\omega_0$ in (\ref{pc}) is just $q\varepsilon_x$, where $q\in(0,\infty)$ and $x\in A^c$, then Corollaries~\ref{cornoth}, \ref{cornoth'}, \ref{th-main2'}
were obtained in \cite{Dr0}.

Proofs of the above results are presented in Sections~\ref{sec-prep}, \ref{sec-proofs}. For the convenience of the reader, in
Section~\ref{sec-baleq} we review some facts of the theories of inner balayage and inner equilibrium measures, the summary being organized correspondingly to its usage in this work. Finally, in Section~\ref{sec-ap} we apply the obtained results to  $\omega:=q\varepsilon_z$, $z\in\partial A$ being suitably chosen, thereby discovering new interesting phenomena.

\section{Preliminaries}\label{sec-baleq}

Unless explicitly stated otherwise, in this section we do not require $(\mathcal H_1)$ to hold.

\subsection{On the inner balayage}\label{some} The theory of inner (Riesz) balayage (sweeping out) of arbitrary $\zeta\in\mathfrak M^+$ to arbitrary $A\subset\mathbb R^n$, $n\geqslant2$, generalizing Cartan's theory \cite{Ca2} of inner Newtonian balayage ($\alpha=2$, $n\geqslant3$) to any $\alpha\in(0,2]$, $\alpha<n$, was originated in the author's recent papers \cite{Z-bal,Z-bal2}, and it found a further development in \cite{Z-arx1}--\cite{Z-arx-22}.\footnote{For the theory of {\it outer} Riesz balayage, see e.g.\ the monographs \cite{BH,Br,Doob}, the last two dealing with the Newtonian kernel ($\alpha=2$, $n\geqslant3$).}

Denote
\begin{equation}\label{gammaa}
   \Gamma_{A,\zeta}:=\bigl\{\mu\in\mathfrak M^+:\ U^\mu\geqslant U^\zeta\quad\text{n.e.\ on $A$}\bigr\},
  \end{equation}
and let $\mathcal E'(A)$ stand for the closure of $\mathcal E^+(A)$ in the strong topology on $\mathcal E^+$. Being a strongly closed subcone of the strongly complete cone $\mathcal E^+$, $\mathcal E'(A)$ is likewise strongly complete, and it is convex.

\begin{theorem}\label{th-bal1}
For any $\zeta:=\sigma\in\mathcal E^+$, there is precisely one $\sigma^A\in\mathcal E'(A)$, called the inner balayage of $\sigma$ to $A$, that is determined by any one of the following {\rm(i$_1$)}--{\rm(iii$_1$)}.
\begin{itemize}
  \item[{\rm(i$_1$)}] There exists the unique $\sigma^A\in\mathcal E'(A)$ having the property\footnote{That is, the inner balayage $\sigma^A$ of $\sigma\in\mathcal E^+$ to $A$ is, actually, the orthogonal projection of $\sigma$ in the pre-Hil\-bert space $\mathcal E$ onto the (convex, strongly complete) cone $\mathcal E'(A)$, cf.\ \cite[Theorem~1.12.3]{E2}.}
  \[\|\sigma-\sigma^A\|=\min_{\mu\in\mathcal E'(A)}\,\|\sigma-\mu\|.\]
  \item[{\rm(ii$_1$)}] There exists the unique $\sigma^A\in\mathcal E'(A)$ satisfying the equality
  \begin{equation*}
   U^{\sigma^A}=U^\sigma\quad\text{n.e.\ on $A$}.
  \end{equation*}
  \item[{\rm(iii$_1$)}] There exists the unique $\sigma^A\in\Gamma_{A,\sigma}$ of minimum energy in the class $\Gamma_{A,\sigma}$, $\Gamma_{A,\sigma}$ being introduced by {\rm(\ref{gammaa})} with $\zeta:=\sigma$. That is,
  \[I(\sigma^A)=\min_{\mu\in\Gamma_{A,\sigma}}\,I(\mu).\]
 \end{itemize}
\end{theorem}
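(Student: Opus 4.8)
The plan is to realise $\sigma^A$ as a Hilbert-space orthogonal projection and then to translate the variational relations characterising that projection into statements about Riesz potentials. Since the Riesz kernel is strictly positive definite and perfect (Section~\ref{sec-alpha}), $\mathcal E$ is a pre-Hilbert space and $\mathcal E^+$ is strongly complete; hence $\mathcal E'(A)$, a strongly closed subcone of $\mathcal E^+$, is convex and strongly complete. The projection theorem for a complete convex subset of a pre-Hilbert space (via the parallelogram identity applied to a minimizing sequence, which is then strong Cauchy) yields a unique $\sigma^A\in\mathcal E'(A)$ realizing $\min_{\mu\in\mathcal E'(A)}\|\sigma-\mu\|$; this is {\rm(i$_1$)}, together with the orthogonal-projection interpretation of the footnote. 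Because $\mathcal E'(A)$ is a cone containing $0$, the standard variational characterization of the projection onto a closed convex cone gives $\langle\sigma-\sigma^A,\sigma^A\rangle=0$ and $\langle\sigma-\sigma^A,\mu\rangle\leqslant0$ for every $\mu\in\mathcal E'(A)$; equivalently, $\int U^{\sigma^A}\,d\sigma^A=\int U^\sigma\,d\sigma^A$ and $\int U^{\sigma^A}\,d\mu\geqslant\int U^\sigma\,d\mu$ for all $\mu\in\mathcal E^+(A)$.

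To obtain {\rm(ii$_1$)} I would apply the standard testing argument to the signed measure $\nu:=\sigma^A-\sigma\in\mathcal E$: if $c_*\bigl(\{U^\nu<0\}\cap A\bigr)>0$, the equilibrium measure of a compact subset of that set of positive capacity lies in $\mathcal E^+(A)$ and contradicts $\int U^\nu\,d\mu\geqslant0$; hence $U^{\sigma^A}\geqslant U^\sigma$ n.e.\ on $A$, and in particular $\sigma^A\in\Gamma_{A,\sigma}$. Combining this with the orthogonality identity and the fact (recalled in the proof of Lemma~\ref{lfin}) that a $\sigma^A$-measurable set of inner capacity zero is $\sigma^A$-negligible, one gets $U^{\sigma^A}=U^\sigma$ $\sigma^A$-a.e.; the classical domination principle, valid for Riesz kernels of order $\alpha\leqslant2$, then upgrades $U^{\sigma^A}\leqslant U^\sigma$ to all of $\mathbb R^n$, so that $U^{\sigma^A}=U^\sigma$ n.e.\ on $A$. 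Uniqueness in {\rm(ii$_1$)} follows since two members of $\mathcal E'(A)$ whose potentials agree n.e.\ on $A$ have equal inner products with every element of the strongly dense subcone $\mathcal E^+(A)$, hence coincide by strict positive definiteness.

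For {\rm(iii$_1$)}, having $\sigma^A\in\Gamma_{A,\sigma}$, it remains to check that it uniquely minimizes energy there. For any $\mu\in\Gamma_{A,\sigma}$ of finite energy (those of infinite energy being irrelevant) one has $U^\mu\geqslant U^\sigma$ n.e.\ on $A$, hence $\sigma^A$-a.e., so $\langle\mu,\sigma^A\rangle=\int U^\mu\,d\sigma^A\geqslant\int U^\sigma\,d\sigma^A=\|\sigma^A\|^2$; therefore $\|\mu-\sigma^A\|^2\leqslant\|\mu\|^2-\|\sigma^A\|^2$, i.e.\ $I(\mu)\geqslant I(\sigma^A)+\|\mu-\sigma^A\|^2$, with equality only when $\mu=\sigma^A$. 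Thus $\sigma^A$ is the unique minimizer of $I(\cdot)$ on $\Gamma_{A,\sigma}$, and conversely any such minimizer equals $\sigma^A$; so {\rm(i$_1$)}, {\rm(ii$_1$)} and {\rm(iii$_1$)} single out the same measure.

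I expect the main obstacle to be not the Hilbert-space mechanics but the potential-theoretic bookkeeping: a priori $\sigma^A$ lies only in the strong closure $\mathcal E'(A)$, hence is merely concentrated on $\overline A$, so one must verify that the capacity-zero exceptional sets entering the phrase ``n.e.\ on $A$'' are genuinely $\sigma^A$-negligible and that the portion of $\sigma^A$ on $\overline A\setminus A$ does not spoil the estimates above --- this relies on the Borel measurability of the potentials together with the strengthened countable subadditivity of inner capacity (Lemma~\ref{str-sub}) to control countable unions of such sets. A convenient way to organize the argument is to prove the theorem first for closed $A$, where $(\mathcal H_1)$ holds automatically so that $\mathcal E'(A)=\mathcal E^+(A)$, $\sigma^A$ is honestly concentrated on $A$, and the maximum/domination principle applies directly, and then to pass to arbitrary $A$ by exhausting it from inside by compact sets $K\uparrow A$ and taking the strong limit $\sigma^A=\lim_{K\uparrow A}\sigma^K$, invoking the strong completeness of $\mathcal E^+$ and the monotone approximation of inner capacity.
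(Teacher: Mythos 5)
The paper gives no proof of this theorem beyond the citation to \cite[Section~3]{Z-bal}, \cite[Section~4]{Z-arx1}, and \cite[Section~3]{Z-arx-22}, and those references establish it essentially as you do: $\sigma^A$ is realised as the orthogonal projection of $\sigma$ in $\mathcal E$ onto the convex, strongly complete cone $\mathcal E'(A)$ (exactly what the footnote to (i$_1$) records), and the variational inequalities $\langle\sigma-\sigma^A,\sigma^A\rangle=0$, $\langle\sigma-\sigma^A,\mu\rangle\leqslant0$ are then converted into (ii$_1$) and (iii$_1$) via the testing argument with equilibrium measures of compact subsets and the domination principle. Your outline is sound, and you correctly isolate the one delicate point for arbitrary $A$ --- namely that $\sigma^A\in\mathcal E'(A)$ need not be concentrated on $A$, so ``n.e.\ on $A$'' does not automatically give ``$\sigma^A$-a.e.''\ --- together with the standard remedies (transferring integrated inequalities from $\mathcal E^+(A)$ to $\mathcal E'(A)$ by strong continuity of $\nu\mapsto\langle\cdot,\nu\rangle$, or exhausting $A$ by compacts and passing to the strong limit), which is precisely how the cited works handle it.
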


\begin{proof}
  See \cite[Section~3]{Z-bal}, \cite[Section~4]{Z-arx1}, and \cite[Section~3]{Z-arx-22}.
\end{proof}

\begin{remark}\label{r-h} If $(\mathcal H_1)$ holds (in particular, if $A$ is quasiclosed), then obviously
$\mathcal E'(A)=\mathcal E^+(A)$,
and so (i$_1$) and (ii$_1$) in Theorem~\ref{th-bal1} remain valid with $\mathcal E^+(A)$ in place of $\mathcal E'(A)$. Thus the inner balayage $\sigma^A$, where $\sigma\in\mathcal E^+$, is then necessarily concentrated on $A$, and it is uniquely determined within $\mathcal E^+(A)$ by $U^{\sigma^A}=U^\sigma$ n.e.\ on $A$.\footnote{This characteristic property of $\sigma^A$ would obviously fail to hold if $A$ were arbitrary.}
\end{remark}

\begin{theorem}\label{th-bal2}For any $\zeta\in\mathfrak M^+$, there exists precisely one $\zeta^A\in\mathfrak M^+$, called the inner balayage of $\zeta$ to $A$, that is determined by any one of the following {\rm(i$_2$)}--{\rm(iii$_2$)}.
\begin{itemize}
\item[{\rm(i$_2$)}] There exists the unique $\zeta^A\in\Gamma_{A,\zeta}$ of minimum potential in $\Gamma_{A,\zeta}$, that is,
  \[U^{\zeta^A}=\min_{\mu\in\Gamma_{A,\zeta}}\,U^\mu\quad\text{on $\mathbb R^n$}.\]
  \item[{\rm(ii$_2$)}] There exists the unique $\zeta^A\in\mathfrak M^+$ satisfying the symmetry relation\footnote{Relation (\ref{eq-sym}) can actually be extended to $\sigma$ of infinite energy, that is (see \cite[Corollary~4.1]{Z-bal}),
\[I(\zeta^A,\mu)=I(\zeta,\mu^A)\quad\text{for all $\zeta,\mu\in\mathfrak M^+$}.\]}
 \begin{equation}\label{eq-sym}
    I(\zeta^A,\sigma)=I(\zeta,\sigma^A)\quad\text{for all $\sigma\in\mathcal E^+$},
  \end{equation}
  where $\sigma^A$ is uniquely determined by Theorem~{\rm\ref{th-bal1}}.
   \item[{\rm(iii$_2$)}] There exists the unique $\zeta^A\in\mathfrak M^+$ satisfying either of the two limit relations
  \begin{align*}\sigma_j^A&\to\zeta^A\quad\text{vaguely in $\mathfrak M^+$ as $j\to\infty$},\\
U^{\sigma_j^A}&\uparrow U^{\zeta^A}\quad\text{pointwise on $\mathbb R^n$ as $j\to\infty$},
\end{align*}
where $(\sigma_j)\subset\mathcal E^+$ denotes an arbitrary sequence having the property\footnote{Such a sequence $(\sigma_j)\subset\mathcal E^+$ does exist (see e.g.\ \cite[p.~272]{L} or \cite[p.~257, footnote]{Ca2}).}
\[U^{\sigma_j}\uparrow U^\zeta\quad\text{pointwise on $\mathbb R^n$ as $j\to\infty$},\]
while $\sigma_j^A$ is uniquely determined by Theorem~{\rm\ref{th-bal1}}.
\end{itemize}
\end{theorem}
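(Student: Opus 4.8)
The plan is to reduce the statement to the finite-energy theory of Theorem~\ref{th-bal1} by approximating $U^\zeta$ from below by potentials of measures in $\mathcal E^+$, to read off $\zeta^A$ as the vague limit of the associated finite-energy balayages, and then to verify (i$_2$)--(iii$_2$) one item at a time, each time using the uniqueness clause of that item to identify the limit. One may assume $c_*(A)>0$, since otherwise $\mathcal E'(A)=\{0\}$ and $\zeta^A:=0$ works trivially. First I would fix $(\sigma_j)\subset\mathcal E^+$ with $U^{\sigma_j}\uparrow U^\zeta$ pointwise on $\mathbb R^n$ (such a sequence exists, cf.\ \cite[p.~272]{L}). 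By Theorem~\ref{th-bal1} each $\sigma_j^A\in\mathcal E'(A)$ is well defined, with $U^{\sigma_j^A}=U^{\sigma_j}$ n.e.\ on $A$ and $U^{\sigma_j^A}\leqslant U^{\sigma_j}$ on $\mathbb R^n$. Since $U^{\sigma_j^A}=U^{\sigma_j}\leqslant U^{\sigma_{j+1}}=U^{\sigma_{j+1}^A}$ n.e.\ on $A$, the domination principle for measures in $\mathcal E'(A)$ (this is where the restriction $\alpha\leqslant2$ is used) upgrades this to $U^{\sigma_j^A}\leqslant U^{\sigma_{j+1}^A}$ everywhere on $\mathbb R^n$. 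Hence $U^{\sigma_j^A}\uparrow u$ pointwise, where $0\leqslant u\leqslant U^\zeta$ and $U^\zeta\not\equiv+\infty$. As $u$ is an increasing limit of $\alpha$-superharmonic potentials dominated by the potential $U^\zeta$, the Riesz representation of $\alpha$-superharmonic functions (together with the fact that a potential has no nonzero $\alpha$-harmonic minorant) shows that $u=U^{\zeta^A}$ for a unique $\zeta^A\in\mathfrak M^+$, namely the vague limit of $(\sigma_j^A)$. Thus $\sigma_j^A\to\zeta^A$ vaguely and $U^{\sigma_j^A}\uparrow U^{\zeta^A}$ on $\mathbb R^n$, and any $\nu\in\mathfrak M^+$ obeying the two limit relations of (iii$_2$) for this $(\sigma_j)$ satisfies $U^\nu=\lim_jU^{\sigma_j^A}=U^{\zeta^A}$, hence $\nu=\zeta^A$ because a positive measure is determined by its Riesz potential.

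For (ii$_2$), fix $\sigma\in\mathcal E^+$. Since $U^{\sigma_j^A}\uparrow U^{\zeta^A}$ and $U^{\sigma_j}\uparrow U^\zeta$, monotone convergence applied to the finite-energy symmetry relation $I(\sigma_j^A,\sigma)=I(\sigma_j,\sigma^A)$, i.e.\ to the identity $\int U^{\sigma_j^A}\,d\sigma=\int U^{\sigma_j}\,d\sigma^A$, gives
\[I(\zeta^A,\sigma)=\int U^{\zeta^A}\,d\sigma=\int U^\zeta\,d\sigma^A=I(\zeta,\sigma^A)\qquad\text{for all }\sigma\in\mathcal E^+,\]
so $\zeta^A$ satisfies (\ref{eq-sym}). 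If $\nu\in\mathfrak M^+$ also satisfies (\ref{eq-sym}), then $\int U^\nu\,d\sigma=\int U^{\zeta^A}\,d\sigma$ for every $\sigma\in\mathcal E^+$; testing against mollified Dirac measures $\varepsilon_x\ast\theta_r\in\mathcal E^+$ and letting $r\downarrow0$ (the averages $U^\mu\ast\theta_r$ increasing to $U^\mu$ for any Riesz potential) yields $U^\nu=U^{\zeta^A}$ on $\mathbb R^n$, whence $\nu=\zeta^A$. In particular, the construction above run with any other admissible sequence produces a measure meeting (\ref{eq-sym}), hence equal to $\zeta^A$; so $\zeta^A$ is well defined and (iii$_2$) holds in full.

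For (i$_2$), I would first check $\zeta^A\in\Gamma_{A,\zeta}$: for each $j$ the inequality $U^{\zeta^A}\geqslant U^{\sigma_j^A}=U^{\sigma_j}$ holds off an inner-polar set $N_j$, and by Lemma~\ref{str-sub} the set $\bigcup_jN_j$ is again inner-polar, so off it $U^{\zeta^A}\geqslant\sup_jU^{\sigma_j}=U^\zeta$, i.e.\ $U^{\zeta^A}\geqslant U^\zeta$ n.e.\ on $A$. For minimality, let $\mu\in\Gamma_{A,\zeta}$; then $U^{\sigma_j^A}=U^{\sigma_j}\leqslant U^\zeta\leqslant U^\mu$ n.e.\ on $A$, and the domination principle for measures in $\mathcal E'(A)$ gives $U^{\sigma_j^A}\leqslant U^\mu$ on $\mathbb R^n$; letting $j\to\infty$ yields $U^{\zeta^A}\leqslant U^\mu$ on $\mathbb R^n$. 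Hence $U^{\zeta^A}=\min_{\mu\in\Gamma_{A,\zeta}}U^\mu$, the minimizer being unique since equal Riesz potentials force equal measures. Finally, the three measures furnished by (i$_2$)--(iii$_2$) coincide, each item being fulfilled by $\zeta^A$ and determining its measure uniquely.

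The step I expect to demand the most care is the passage from the increasing family $U^{\sigma_j^A}$ to the measure $\zeta^A$: the principle of descent delivers only $U^{\zeta^A}\leqslant\sup_jU^{\sigma_j^A}$ for a vague limit, and the reverse inequality --- equivalently, that an increasing limit of Riesz potentials dominated by a potential is again a potential --- genuinely relies on the monotonicity together with the Riesz decomposition of $\alpha$-superharmonic functions and the triviality of the greatest $\alpha$-harmonic minorant of a potential. A second, pervasive technicality is the handling of the inner-polar exceptional sets: in each ``n.e.\ on $A$'' assertion these must be amalgamated via the strengthened countable subadditivity of $c_*$ (Lemma~\ref{str-sub}) so as to survive the limit $j\to\infty$, and --- to license the domination principle in the minimality part of (i$_2$) --- they must be reconciled with the way the finite-energy measures $\sigma_j^A\in\mathcal E'(A)$ distribute their mass on $\partial A\setminus A$. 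The remaining manipulations are routine Fubini and monotone-convergence arguments.
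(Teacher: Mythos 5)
The paper gives no argument for this theorem at all --- its ``proof'' is the single line ``See \cite[Sections~3, 4]{Z-bal}'' --- and your construction (monotone approximation of $U^\zeta$ from below by potentials of finite-energy measures, passage to the increasing limit of the balayaged potentials $U^{\sigma_j^A}$, identification of that limit as the potential of the vague limit via the Riesz decomposition and the triviality of the $\alpha$-harmonic minorant of a potential, then verification of (i$_2$)--(iii$_2$) with each uniqueness clause used to show independence of the approximating sequence) is precisely the route taken in that reference, so your proposal is correct and follows essentially the same approach. The only step that is not self-contained is the one you yourself flag: upgrading ``n.e.\ on $A$'' inequalities to ``$\sigma_j^A$-a.e.'' ones so as to license the domination principle, which for non-quasiclosed $A$ is nontrivial because $\sigma_j^A\in\mathcal E'(A)$ need not be concentrated on $A$, and rests on the fact (established in the cited source) that every measure of the strong closure $\mathcal E'(A)$ charges the complement of $A$ only within a set of inner capacity zero.
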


\begin{proof}
  See \cite[Sections~3, 4]{Z-bal}.
\end{proof}

\begin{remark}\label{rem-nodet}
  It is worth noting that, although for $\zeta\in\mathfrak M^+$, we still have
  \begin{equation}\label{ineq1}
   U^{\zeta^A}=U^\zeta\quad\text{n.e.\ on $A$},
  \end{equation}
  see \cite[Theorem~3.10]{Z-bal}, this equality no longer determines $\zeta^A$ uniquely (as it does for $\zeta:=\sigma\in\mathcal E^+$, cf.\ Theorem~\ref{th-bal1}(ii$_1$) or Remark~\ref{r-h}), which can be seen by taking $\zeta:=\varepsilon_y$, $y$ being an inner $\alpha$-irregular point for $A$ (for definition see below).\end{remark}

A point $y\in\overline{A}$ is said to be {\it inner $\alpha$-irregular} for $A$ if $\varepsilon_y^A\ne\varepsilon_y$; we denote by $A_I$ the set of all those $y$. By the Wiener type criterion \cite[Theorem~6.4]{Z-bal}, $A_I$ consists of all $y\in\overline{A}$ such that
\begin{equation}\label{w}\sum_{j\in\mathbb N}\,\frac{c_*(A_j)}{r^{j(n-\alpha)}}<\infty,\end{equation}
where $r\in(0,1)$ and $A_j:=A\cap\{x\in\mathbb R^n:\ r^{j+1}<|x-y|\leqslant r^j\}$ (and hence $A_I\subset\partial A$); while by the Kel\-logg--Ev\-ans type theorem \cite[Theorem~6.6]{Z-bal},\footnote{Observe that both (\ref{w}) and (\ref{KE}) refer to inner capacity; compare with the Kel\-l\-ogg--Ev\-ans and Wiener type theorems established for {\it outer} balayage (see e.g.\ \cite{BH,Br,Ca2,Doob}). Regarding (\ref{KE}), also note that the whole set $A_I$ may be of nonzero capacity \cite[Section~V.4.12]{L}.}
\begin{equation}\label{KE}c_*(A\cap A_I)=0.\end{equation}
All other points of $\overline{A}$ are said to be {\it inner $\alpha$-regular} for $A$; we denote $A_R:=\overline{A}\setminus A_I$. Alternatively (see \cite[Lemma~6.3]{Z-bal}),
\[y\in A_R\iff U^{\mu^A}(y)=U^\mu(y)\quad\text{for all $\mu\in\mathfrak M^+$}.\]

Along with (\ref{ineq1}), the following properties of the inner balayage $\zeta^A$, $\zeta\in\mathfrak M^+$ and $A\subset\mathbb R^n$ being arbitrary, will be useful in the sequel.

\begin{itemize}
\item[(a)] $U^{\zeta^A}\leqslant U^\zeta$ everywhere on $\mathbb R^n$ (see \cite[Theorem~3.10]{Z-bal}).
\item[(b)] Principle of positivity of mass: $\zeta^A(\mathbb R^n)\leqslant\zeta(\mathbb R^n)$ (see \cite[Corollary~4.9]{Z-bal}).
\item[(c)] Balayage "with a rest": $\zeta^A=(\zeta^Q)^A$ for any $Q\supset A$ (see \cite[Corollary~4.2]{Z-bal}).
\item[(d)] If $c_*(A)>0$, then $\zeta^A=0\iff\zeta=0$.
\item[(e)] For any $\zeta_1,\zeta_2\in\mathfrak M^+$ and $a_1,a_2\in[0,\infty)$, $(a_1\zeta_1+a_2\zeta_2)^A=a_1\zeta_1^A+a_2\zeta_2^A$.
\end{itemize}

\subsection{On the inner equilibrium measure}\label{sec-eq}
Throughout the present paper, the inner (Riesz) equilibrium measure $\gamma_A$ of $A\subset\mathbb R^n$ is understood in an extended sense where $I(\gamma_A)$ as well as $\gamma_A(\mathbb R^n)$ might be $+\infty$.
Define
\begin{equation*}
\Gamma_A:=\bigl\{\mu\in\mathfrak M^+:\ U^\mu\geqslant1\quad\text{n.e.\ on $A$}\bigr\}.\end{equation*}

\begin{definition}[{\rm see \cite[Section~5]{Z-bal}}]\label{def-eq} $\gamma_A\in\mathfrak M^+$ is said to be {\it the inner equilibrium measure} of $A\subset\mathbb R^n$ if it is of minimum potential in $\Gamma_A$, that is, if $\gamma_A\in\Gamma_A$ and
\begin{equation}\label{G'}U^{\gamma_A}=\min_{\mu\in\Gamma_A}\,U^\mu\quad\text{on $\mathbb R^n$}.\end{equation}
\end{definition}

Such $\gamma_A$ is unique (if it exists), which is obvious from \cite[Theorem~1.12]{L}.

As seen from the following theorem, the concept of inner equilibrium measure is closely related to that of inner balayage. For any $y\in\mathbb R^n$, we denote by $A_y^*$ the inverse of $A$ with respect to $S_{y,1}:=\{x\in\mathbb R^n:\ |x-y|=1\}$, cf.\ \cite[Section~IV.5.19]{L}.

\begin{theorem}\label{th-eq} For arbitrary $A\subset\mathbb R^n$, the following {\rm(i$_3$)}--{\rm(vi$_3$)} are equivalent.
\begin{itemize}
  \item[{\rm(i$_3$)}] There exists the {\rm(}unique{\rm)} inner equilibrium measure $\gamma_A$ of $A$.
  \item[{\rm(ii$_3$)}] There exists $\nu\in\mathfrak M^+$ having the property
\[\essinf_{x\in A}\,U^\nu(x)>0,\]
the infimum being taken over all of $A$ except for a subset of $c_*(\cdot)=0$.
\item[{\rm(iii$_3$)}]For some {\rm(}equivalently, every{\rm)} $y\in\mathbb R^n$,
\begin{equation}\label{iii}\sum_{j\in\mathbb N}\,\frac{c_*(A_j)}{r^{j(n-\alpha)}}<\infty,\end{equation}
where $r\in(1,\infty)$ and $A_j:=A\cap\{x\in\mathbb R^n:\ r^j\leqslant|x-y|<r^{j+1}\}$.
\item[{\rm(iv$_3$)}]For some {\rm(}equivalently, every{\rm)} $y\in\mathbb R^n$,
\begin{equation*}y\not\in(A_y^*)_R.\end{equation*}
\item[{\rm(v$_3$)}]For some {\rm(}equivalently, every{\rm)} $y\in\mathbb R^n$, $\varepsilon_y^{A_y^*}$ is $C$-absolutely continuous.\footnote{$\mu\in\mathfrak M^+$ is said to be {\it $C$-absolutely continuous} if $\mu(K)=0$ for every compact set $K\subset\mathbb R^n$ with $c(K)=0$. This certainly occurs if $I(\mu)<\infty$, but not conversely (see \cite[pp.~134--135]{L}).\label{f-C}}
\item[{\rm(vi$_3$)}] There exists $\chi\in\mathfrak M^+$ having the property\footnote{Compare with Section~\ref{some}, (b).}
\[\chi^A(\mathbb R^n)<\chi(\mathbb R^n).\]
    \end{itemize}

Furthermore, if any one of these {\rm(i$_3$)--(vi$_3$)} is fulfilled, then for every $y\in\mathbb R^n$,
\begin{equation}\label{har-eq}\varepsilon_y^{A_y^*}=\gamma_A^*,\end{equation}
where $\gamma_A^*$ denotes the Kelvin transform of $\gamma_{A}$ with respect to the sphere $S_{y,1}$.\footnote{For the concept of Kelvin transformation, see \cite[Section~14]{R} as well as \cite[Section~IV.5.19]{L}.}
\end{theorem}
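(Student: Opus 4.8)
The plan is to prove Theorem~\ref{th-eq} by establishing the equivalences in a cyclic (or hub-and-spoke) fashion, with the inner balayage theory of Section~\ref{some} and the characterization of inner $\alpha$-regular points doing most of the work. First I would reduce (iii$_3$), (iv$_3$), (v$_3$) to statements about a single fixed sphere $S_{y,1}$ by Kelvin transformation: the Kelvin transform is an involution that interchanges $A$ and $A_y^*$, maps $\mathfrak M^+$ to $\mathfrak M^+$, and (by the conformal invariance of the Riesz kernel, \cite[Section~IV.5.19]{L}) transforms potentials and energies in a controlled way, sending $\varepsilon_y$ to (a multiple of) the point at infinity's "charge". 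Concretely, $U^{\gamma_A}$ pulls back under Kelvin transformation to $U^{\varepsilon_y^{A_y^*}}$ up to the usual Kelvin factor $|x-y|^{\alpha-n}$; this is exactly the content of formula (\ref{har-eq}), so I would in fact prove (\ref{har-eq}) \emph{simultaneously} with the equivalence (i$_3$)$\iff$(iv$_3$). The Wiener-type series in (\ref{iii}) is the image under inversion of the Wiener series (\ref{w}) applied to $A_y^*$ at the point $y$ (the annuli $r^j\le|x-y|<r^{j+1}$ around infinity become annuli $r^{-j-1}<|x-y|\le r^{-j}$ around $y$, and inner capacity transforms by a bounded factor on each annulus), so (iii$_3$)$\iff$(iv$_3$) follows from the Wiener criterion \cite[Theorem~6.4]{Z-bal} characterizing $A_I$, once one observes $y\in(A_y^*)_R$ means precisely the Wiener series for $A_y^*$ at $y$ diverges. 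Similarly (iv$_3$)$\iff$(v$_3$): $y\notin(A_y^*)_R$ means $\varepsilon_y^{A_y^*}\ne\varepsilon_y$, and one checks that $\varepsilon_y^{A_y^*}=\varepsilon_y$ would force $\varepsilon_y^{A_y^*}(\{y\})>0$, hence non-$C$-absolute-continuity, while conversely if $y\notin(A_y^*)_R$ then $\varepsilon_y^{A_y^*}$ is $C$-absolutely continuous by the general properties of balayage of a point mass onto a set not containing that point as an irregular accumulation point (using \cite[Theorem~3.10]{Z-bal} and \cite[Theorem~6.4]{Z-bal}).

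Next I would handle the "analytic" equivalences (i$_3$)$\iff$(ii$_3$)$\iff$(vi$_3$). The implication (i$_3$)$\Rightarrow$(ii$_3$) is immediate with $\nu:=\gamma_A$, since $U^{\gamma_A}\ge1$ n.e.\ on $A$ and the exceptional set has inner capacity zero. For (ii$_3$)$\Rightarrow$(i$_3$): given $\nu$ with $\operatorname{ess\,inf}_{x\in A}U^\nu(x)=:m>0$, the scaled measure $m^{-1}\nu$ lies in $\Gamma_A$, so $\Gamma_A\ne\varnothing$; then one runs the standard minimum-potential construction — take a sequence $(\mu_k)\subset\Gamma_A$ with $U^{\mu_k}$ decreasing towards $\inf_{\mu\in\Gamma_A}U^\mu$, use the vague compactness of suitable truncations together with the lower envelope principle and the fact that $\Gamma_A$ is closed under the relevant limits, to produce $\gamma_A\in\Gamma_A$ of minimum potential; the details are exactly those of \cite[Section~5]{Z-bal}, which I would cite rather than reproduce. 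For (vi$_3$)$\iff$(ii$_3$): if $\chi^A(\mathbb R^n)<\chi(\mathbb R^n)$, then "mass is lost" under balayage onto $A$, and by the symmetry relation (\ref{eq-sym}) together with the description of the total mass of a balayage in terms of the inner equilibrium measure (this is precisely where one uses that the mass defect equals $\int U^{\gamma_A}\,d(\text{something})$ type identities), this mass loss is equivalent to the existence of $\gamma_A$; conversely, if $\gamma_A$ exists and $\gamma_A(\mathbb R^n)<\infty$, testing against $\chi$ with $U^\chi\equiv\infty$-avoidance gives strict mass loss, while if $\gamma_A(\mathbb R^n)=\infty$ one still extracts strict inequality for a bounded $\chi$ concentrated far from $A$. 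I would most cleanly route this through: (vi$_3$) $\Rightarrow$ the "swept-out mass" functional $\mu\mapsto\mu(\mathbb R^n)-\mu^A(\mathbb R^n)$ is not identically zero $\Rightarrow$ by \cite[Corollary~4.9]{Z-bal} and its proof, $\Gamma_A\ne\varnothing$, i.e.\ (i$_3$).

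The main obstacle, I expect, will be making the Kelvin-transform bookkeeping in the proof of (\ref{har-eq}) fully rigorous when $\gamma_A$ has infinite total mass or infinite energy — the "extended sense" flagged at the start of Section~\ref{sec-eq}. In that regime one cannot simply invoke the Hilbert-space projection characterization of balayage (Theorem~\ref{th-bal1}); instead one must work with the minimum-potential / vague-limit characterizations (Theorem~\ref{th-bal2}(i$_2$), (iii$_2$)) and check that Kelvin transformation commutes with the increasing approximation $U^{\sigma_j}\uparrow U^\zeta$ and with passage to the minimum potential over $\Gamma_A$ versus $\Gamma_{A_y^*,\varepsilon_y}$. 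Concretely, the delicate point is that the Kelvin transform of a measure of infinite energy is again only a measure (possibly of infinite energy), and that the pointwise identity $U^{(\gamma_A)^*}(x)=|x-y|^{\alpha-n}U^{\gamma_A}(x^*)$ must be verified to hold \emph{everywhere}, including at $y$ itself and at infinity, in order to transfer the defining minimality in (\ref{G'}) to the defining minimality of $\varepsilon_y^{A_y^*}$ in Theorem~\ref{th-bal2}(i$_2$). Once this pointwise Kelvin identity for potentials is nailed down, (\ref{har-eq}) and the equivalence (i$_3$)$\iff$(iv$_3$) drop out together, and the rest of the theorem is assembled from the already-cited results of \cite{Z-bal}.
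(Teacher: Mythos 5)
The paper does not actually prove this theorem: its ``proof'' is the single line ``See \cite{Z-bal2} (Theorem~2.1 and Corollary~5.3)'', so there is no in-paper argument to measure your sketch against, and in the end your argument, too, rests on citing the same sources. That said, your reconstruction of the strategy is the right one: the Kelvin transformation with respect to $S_{y,1}$ carries the class $\Gamma_A$ onto $\Gamma_{A_y^*,\varepsilon_y}$ (the constant $1$ being the Kelvin image of $U^{\varepsilon_y}$), so the minimum-potential characterizations (\ref{G'}) and Theorem~\ref{th-bal2}(i$_2$) correspond and (\ref{har-eq}) is indeed the hub; your inversion bookkeeping for the Wiener series, giving (iii$_3$)$\iff$(iv$_3$), also checks out.

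As a standalone argument, however, the sketch has one genuine gap, which you partly flag and then wave away. The Kelvin transfer that produces $\gamma_A$ from $\varepsilon_y^{A_y^*}$ yields a Radon measure on $\mathbb R^n$ only if $\varepsilon_y^{A_y^*}(\{y\})=0$: any atom at $y$ becomes ``mass at infinity'' and destroys (\ref{har-eq}). So the implication (iv$_3$)$\Rightarrow$(i$_3$) secretly requires the dichotomy ``either $\varepsilon_y^{A_y^*}=\varepsilon_y$, or $\varepsilon_y^{A_y^*}$ has no atom at $y$ (indeed is $C$-absolutely continuous)'' --- which is exactly the nontrivial half of (iv$_3$)$\iff$(v$_3$) that you assert ``by the general properties of balayage''. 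For \emph{inner} balayage onto arbitrary (non-closed, non-Borel) sets this is not a general property: the swept measure may charge sets of capacity zero, and $A_I$ itself may have positive capacity, as the paper itself notes after (\ref{KE}); this dichotomy is precisely the content of \cite[Theorem~2.1]{Z-bal2}. Similarly, your route to (vi$_3$) needs the total-mass identity $\varepsilon_x^A(\mathbb R^n)=U^{\gamma_A}(x)$ (resp.\ $\varepsilon_x^A(\mathbb R^n)=1$ when $\gamma_A$ fails to exist) together with the integral representation $\zeta^A=\int\varepsilon_y^A\,d\zeta(y)$; you gesture at this but do not derive it, and \cite[Corollary~4.9]{Z-bal} alone (the mass inequality) does not give it. What your write-up adds over the paper's one-line citation is correct scaffolding, but the two steps just named are the real theorems here and cannot be dismissed as routine.
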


\begin{proof}
See \cite{Z-bal2} (Theorem~2.1 and Corollary~5.3).
\end{proof}

\begin{definition}[{\rm cf.\ \cite[Definition~2.1]{Z-bal2}}]\label{def-thin} $A\subset\mathbb R^n$ is said to be {\it inner $\alpha$-thin at infinity} if any one of the above {\rm(i$_3$)--(vi$_3$)} holds true.
\end{definition}

\begin{remark}\label{doob}
As seen from (\ref{iii}), the concept of inner $\alpha$-thinness of a set at infinity thus defined coincides with that introduced by Kurokawa and Mizuta \cite{KM}. If $\alpha=2$ while a set in question is Borel, then this concept also coincides with that of {\it outer} thinness of a set at infinity by Doob \cite[pp.~175--176]{Doob}.
\end{remark}

Given $A\subset\mathbb R^n$, we denote by $\mathfrak C_A$ the upward directed set of all compact subsets $K$ of $A$, where $K_1\leqslant K_2$ if and only if $K_1\subset K_2$. If a net $(x_K)_{K\in\mathfrak C_A}\subset Y$ converges to $x_0\in Y$, $Y$ being a topological space, then we shall indicate this fact by writing
\begin{equation*}x_K\to x_0\text{ \ in $Y$ as $K\uparrow A$}.\end{equation*}

Assume that the inner equilibrium measure $\gamma_A$ exists (or equivalently that $A$ is inner $\alpha$-thin at infinity). By use of properties of the Kelvin transformation (see \cite[Section~IV.5.19]{L}), we derive from Theorem~\ref{th-eq} and the facts reviewed in Section~\ref{some} that $\gamma_A$ is $C$-absolutely continuous, supported by $\overline{A}$, and having the property
\begin{equation}
0<U^{\gamma_A}\leqslant1\quad\text{on $\mathbb R^n$}.\label{ineqne}
\end{equation}
In addition,\footnote{However, the equality $U^{\gamma_A}=1$ n.e.\ on $A$ does not characterize $\gamma_A$ uniquely, cf.\ Theorem~\ref{th-eq''}.}
\begin{equation}
U^{\gamma_A}=1\quad\text{on $A_R$\quad(hence, n.e.\ on $A$)}.\label{eqne}
\end{equation}
The same $\gamma_A$ can be uniquely determined by either of the two limit relations
\begin{align}
  \gamma_K&\to\gamma_A\quad\text{vaguely in $\mathfrak M^+$ as $K\uparrow A$},\label{cv1}\\
  U^{\gamma_K}&\uparrow U^{\gamma_A}\quad\text{pointwise on $\mathbb R^n$ as $K\uparrow A$},\label{cv2}
\end{align}
where $\gamma_K$ denotes the only measure in $\mathcal E^+(K)$ with $U^{\gamma_K}=1$ n.e.\ on $K$~--- namely, the (classical) equilibrium measure on $K$ \cite[Section~II.1.3]{L}, normalized by
\begin{equation}\label{k}
\gamma_K(\mathbb R^n)=\|\gamma_K\|^2=c(K).
\end{equation}
(For the alternative characterizations (\ref{cv1}) and (\ref{cv2}) of $\gamma_A$, see \cite[Lemma~5.3]{Z-bal}.)

If moreover $c_*(A)<\infty$, then (and only then) the above $\gamma_A$ is of finite energy,\footnote{In fact, the "if" part of this claim is obtained from (\ref{cv1}) and (\ref{k}) by use of \cite[Eq.~(1.4.5)]{L} (the principle of descent), whereas the "only if" part follows from the chain of inequalities
\[c(K)=\int U^{\gamma_A}\,d\gamma_K=\int U^{\gamma_K}\,d\gamma_A\leqslant\int U^{\gamma_A}\,d\gamma_A=I(\gamma_A)<\infty\quad\text{for all $K\in\mathfrak C_A$},\]
where the first equality is implied by (\ref{eqne}) and (\ref{k}), while the first inequality~--- by (\ref{cv2}).}
and it can alternatively be characterized as the only measure in $\Gamma_A\cap\mathcal E$ of minimum energy (see \cite[Theorems~6.1, 9.2]{Z-arx-22}, cf.\ \cite[Theorem~2.6]{L}):
\begin{equation}\label{cv4'}\|\gamma_A\|^2=\min_{\mu\in\Gamma_A\cap\mathcal E}\,\|\mu\|^2.\end{equation}
Furthermore, then, along with (\ref{cv1}) and (\ref{cv2}),
\begin{equation*}
\gamma_K\to\gamma_A\quad\text{strongly in $\mathcal E^+$ as $K\uparrow A$}
\end{equation*}
(see \cite[Theorem~8.1]{Z-arx-22}), and therefore
\[\gamma_A\in\mathcal E'(A).\]

On account of \cite[Theorem~7.2(c)]{Z-arx-22}, we are thus led to the following conclusion.

\begin{theorem}\label{th-eq''}Assume $c_*(A)<\infty$ and $(\mathcal H_1)$ holds. Then the inner equilibrium measure $\gamma_A$, uniquely determined by either of {\rm(\ref{G'})} or {\rm(\ref{cv4'})}, belongs to $\mathcal E^+(A)$, i.e.
\begin{equation}\label{eqfin}
\gamma_A\in\mathcal E^+(A),
\end{equation}
and it is characterized as the only measure in $\mathcal E^+(A)$ with $U^{\gamma_A}=1$ n.e.\ on $A$.\end{theorem}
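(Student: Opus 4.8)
The plan is to read off $\gamma_A\in\mathcal E^+(A)$ from the strong approximation recalled just above, and then to prove the uniqueness assertion by the classical energy argument, using that a finite-energy (signed) measure charges no set of inner capacity zero. First I would observe that, since $c_*(A)<\infty$, the inner equilibrium measure $\gamma_A$ (taken in the extended sense) is of finite energy and $\gamma_K\to\gamma_A$ strongly in $\mathcal E^+$ as $K\uparrow A$, whence $\gamma_A\in\mathcal E'(A)$, the strong closure of $\mathcal E^+(A)$. As $(\mathcal H_1)$ holds, Remark~\ref{r-h} yields $\mathcal E'(A)=\mathcal E^+(A)$, and therefore $\gamma_A\in\mathcal E^+(A)$, i.e.\ (\ref{eqfin}); in particular $\gamma_A$ is concentrated on $A$. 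Moreover, combining (\ref{eqne}) with the Kellogg--Evans type relation (\ref{KE}), $U^{\gamma_A}=1$ on $A_R$, hence n.e.\ on $A$; so $\gamma_A$ is a measure in $\mathcal E^+(A)$ with $U^{\gamma_A}=1$ n.e.\ on $A$, and it remains to see it is the only one.

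To that end, suppose $\nu\in\mathcal E^+(A)$ also satisfies $U^\nu=1$ n.e.\ on $A$, and set $\theta:=\nu-\gamma_A$. Then $\theta\in\mathcal E$ is concentrated on $A$ (being a difference of measures concentrated on $A$), and, the potentials $U^\nu$, $U^{\gamma_A}$ being finite n.e., $U^\theta=U^\nu-U^{\gamma_A}=0$ n.e.\ on $A$. Since $|\theta|\leqslant\nu+\gamma_A\in\mathcal E^+$, the measure $|\theta|$ has finite energy and hence charges no set of inner capacity zero (see \cite[Lemma~3.5]{Z-Rarx}; cf.\ the footnote in the proof of Lemma~\ref{lfin}). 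As $\theta$ is concentrated on $A$ and $U^\theta$ vanishes n.e.\ on $A$, we obtain $U^\theta=0$ $\theta$-a.e.\ on $\mathbb R^n$, so that
\[\|\theta\|^2=\int U^\theta\,d\theta=0,\]
and the strict positive definiteness of $\kappa_\alpha$ forces $\theta=0$, i.e.\ $\nu=\gamma_A$.

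The argument is short because the substantive work is already done: the strong convergence $\gamma_K\to\gamma_A$ (valid for $c_*(A)<\infty$) and the identity $\mathcal E'(A)=\mathcal E^+(A)$ (valid under $(\mathcal H_1)$). The only point calling for care is the passage from ``n.e.\ on $A$'' to ``$\theta$-a.e.'' in the uniqueness step; this is precisely where restricting to $\mathcal E^+(A)$ removes the nonuniqueness present in the general equilibrium relation $U^{\gamma_A}=1$ n.e.\ on $A$ (compare the footnote attached to (\ref{eqne})). Everything above may alternatively be subsumed under \cite[Theorem~7.2(c)]{Z-arx-22}.
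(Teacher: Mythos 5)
Your proposal is correct and follows essentially the same route as the paper: the membership $\gamma_A\in\mathcal E^+(A)$ is read off exactly as in the text from the strong convergence $\gamma_K\to\gamma_A$ (valid since $c_*(A)<\infty$) together with $\mathcal E'(A)=\mathcal E^+(A)$ under $(\mathcal H_1)$. The only difference is that for the uniqueness you spell out the standard energy argument ($U^\theta=0$ n.e.\ on $A$, hence $\theta$-a.e.\ because finite-energy measures concentrated on $A$ charge no set of inner capacity zero, whence $\|\theta\|^2=0$), whereas the paper simply cites \cite[Theorem~7.2(c)]{Z-arx-22} for this step --- as you yourself note at the end.
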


\begin{corollary}\label{cor-eq}
 Under the assumptions of Theorem~{\rm\ref{th-eq''}},
 \begin{equation}\label{eqbal}
 (\gamma_A)^A=\gamma_A.
\end{equation}
\end{corollary}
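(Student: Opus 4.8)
The plan is to deduce the identity $(\gamma_A)^A=\gamma_A$ directly from the characterization of the inner balayage of finite-energy measures, combined with Theorem~\ref{th-eq''}. First I would recall that, under the present hypotheses ($c_*(A)<\infty$ and $(\mathcal H_1)$), relation (\ref{eqfin}) gives $\gamma_A\in\mathcal E^+(A)\subset\mathcal E^+$; in particular the inner balayage $(\gamma_A)^A$ is exactly the measure furnished by Theorem~\ref{th-bal1} applied to $\sigma:=\gamma_A$, so there is no ambiguity in the symbol $(\gamma_A)^A$.

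Next, since $(\mathcal H_1)$ is assumed, Remark~\ref{r-h} shows that $\mathcal E'(A)=\mathcal E^+(A)$, whence Theorem~\ref{th-bal1}(ii$_1$) may be read as: $(\gamma_A)^A$ is the \emph{unique} measure in $\mathcal E^+(A)$ whose potential agrees with $U^{\gamma_A}$ n.e.\ on $A$. But $\gamma_A$ itself lies in $\mathcal E^+(A)$ and trivially satisfies $U^{\gamma_A}=U^{\gamma_A}$ n.e.\ on $A$. By the uniqueness assertion, $\gamma_A$ must coincide with $(\gamma_A)^A$, which is precisely (\ref{eqbal}).

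There is essentially no obstacle here beyond checking that the hypotheses of Theorem~\ref{th-bal1} and Remark~\ref{r-h} are genuinely in force: that $\gamma_A$ is of finite energy (this is where $c_*(A)<\infty$ enters, via the discussion preceding (\ref{cv4'})), and that $(\mathcal H_1)$ holds, so that the strongly closed cone $\mathcal E^+(A)$ coincides with $\mathcal E'(A)$ and hence $\sigma^A$ is concentrated on $A$ and uniquely pinned down by its potential n.e.\ on $A$. Equivalently, one could invoke the projection description in the footnote to Theorem~\ref{th-bal1}(i$_1$): $(\gamma_A)^A$ is the orthogonal projection of $\gamma_A$ in $\mathcal E$ onto the convex, strongly complete cone $\mathcal E'(A)=\mathcal E^+(A)$, and the projection of a point already belonging to that cone is the point itself.
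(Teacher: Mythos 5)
Your argument is correct, and it is in fact a little more direct than the one the paper gives. Both proofs rest on the same two ingredients: that $\gamma_A\in\mathcal E^+(A)$ (Theorem~\ref{th-eq''}, via $c_*(A)<\infty$), and that under $(\mathcal H_1)$ one has $\mathcal E'(A)=\mathcal E^+(A)$, so that $(\gamma_A)^A$ is the unique measure in $\mathcal E^+(A)$ with $U^{(\gamma_A)^A}=U^{\gamma_A}$ n.e.\ on $A$ (Remark~\ref{r-h}, i.e.\ Theorem~\ref{th-bal1}(ii$_1$)). At that point you simply observe that $\gamma_A$ itself belongs to $\mathcal E^+(A)$ and trivially has the required potential, so the uniqueness in Theorem~\ref{th-bal1}(ii$_1$) forces $(\gamma_A)^A=\gamma_A$; equivalently, the orthogonal projection onto $\mathcal E'(A)=\mathcal E^+(A)$ fixes every point of that cone. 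The paper instead takes a slight detour: it uses $U^{\gamma_A}=1$ n.e.\ on $A$ (relation (\ref{eqne})) together with Lemma~\ref{str-sub} to conclude $U^{(\gamma_A)^A}=1$ n.e.\ on $A$, and then invokes the uniqueness statement of Theorem~\ref{th-eq''} (that $\gamma_A$ is the only measure in $\mathcal E^+(A)$ whose potential equals $1$ n.e.\ on $A$). Your route avoids the equilibrium property of $\gamma_A$ altogether and shows the more general fact that $\sigma^A=\sigma$ for every $\sigma\in\mathcal E^+(A)$ under $(\mathcal H_1)$ --- an observation the paper itself uses elsewhere, e.g.\ to obtain (\ref{Dm'}). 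What the paper's version buys is a second illustration of the characterization of $\gamma_A$ within $\mathcal E^+(A)$ by its potential; what yours buys is brevity and generality. Either is acceptable.
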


\begin{proof} As observed in Remark~\ref{r-h}, under the stated requirements $(\gamma_A)^A$ is concentrated on $A$, and it is uniquely determined within $\mathcal E^+(A)$ by $U^{(\gamma_A)^A}=U^{\gamma_A}$ n.e.\ on $A$.
Since $U^{\gamma_A}=1$ n.e.\ on $A$, see (\ref{eqne}), applying Lemma~\ref{str-sub} therefore gives $U^{(\gamma_A)^A}=1$ n.e.\ on $A$,
which according to Theorem~\ref{th-eq''} necessarily results in (\ref{eqbal}).
\end{proof}

\section{Proofs of the main assertions: preparatory results}\label{sec-prep}

In Sections~\ref{sec-prep} and \ref{sec-proofs}, a set $A\subset\mathbb R^n$, a measure $\omega\in\mathfrak M^+$, and an external field $f$ are assumed to satisfy $(\mathcal H_1)$--$(\mathcal H_3)$ and (\ref{capnon0}). Then, by $(\mathcal H_1)$,
\begin{equation}\label{ee}
\mathcal E'(A)=\mathcal E^+(A).
\end{equation}

\subsection{A dual extremal problem} Along with  $f=-U^\omega$, where $\omega\in\mathfrak M^+$ is such that $\omega^A\in\mathcal E^+$ (see $(\mathcal H_2)$ and $(\mathcal H_3)$), we consider the (dual) external field $\tilde{f}$ defined by
\begin{equation}\label{eq-dual1}\tilde{f}:=-U^{\omega^A}.\end{equation}
Then, by virtue of (\ref{ineq1}) with $\zeta:=\omega$,
\[\tilde{f}=f\quad\text{n.e.\ on $A$},\] hence $\mu$-a.e.\ for all $\mu\in\mathcal E^+(A)$ (footnote~\ref{f-negl}), which results in the following conclusion.

\begin{lemma}\label{l-dual} It holds true that
\begin{equation}\label{eq-dual}I_f(\mu)=\|\mu\|^2+2\int\tilde{f}\,d\mu=:I_{\tilde{f}}(\mu)\quad\text{for all $\mu\in\mathcal E^+(A)$},\end{equation}
hence
\begin{equation}\label{eq-dual3}w_{f}(A)=\inf_{\mu\in\breve{\mathcal E}^+(A)}\,I_{\tilde{f}}(\mu)=:w_{\tilde{f}}(A),\end{equation}
and consequently $\lambda_{A,f}$ exists if and only if so does $\lambda_{A,\tilde{f}}$, the solution to Problem~{\rm\ref{pr}} with $\tilde{f}$ in place of $f$, and in the affirmative case these two measures coincide:
\[\lambda_{A,f}=\lambda_{A,\tilde{f}}.\]
\end{lemma}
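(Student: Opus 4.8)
\textbf{Proof plan for Lemma~\ref{l-dual}.} The plan is to reduce everything to the pointwise identity $\tilde f=f$ n.e.\ on $A$, which is the only substantive input, and then transfer it to the measures in play via the absolute-continuity fact recorded in footnote~\ref{f-negl}. First I would spell out why $\tilde f=f$ n.e.\ on $A$: by definition $f=-U^\omega$ and $\tilde f=-U^{\omega^A}$, and (\ref{ineq1}) applied with $\zeta:=\omega$ gives $U^{\omega^A}=U^\omega$ n.e.\ on $A$; negating both sides yields the claim. (Here $\omega^A\in\mathcal E^+$ by $(\mathcal H_3)$, so $U^{\omega^A}$ is a genuine potential, finite n.e.) Next, fix any $\mu\in\mathcal E^+(A)$. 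The set $E:=\{x\in A:\ \tilde f(x)\ne f(x)\}$ has $c_*(E)=0$, and it is $\mu$-measurable (both $f$ and $\tilde f$ are universally measurable, $f$ by hypothesis and $\tilde f$ as a Riesz potential); since $\mu\in\mathcal E^+$ and $\mu$ is concentrated on $A$, footnote~\ref{f-negl} gives $\mu^*(E)=0$, i.e.\ $\tilde f=f$ holds $\mu$-a.e.\ on $\mathbb R^n$.

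From $\tilde f=f$ $\mu$-a.e.\ it follows at once that $\int\tilde f\,d\mu=\int f\,d\mu$ for every $\mu\in\mathcal E^+(A)$, interpreted in the sense that one side is well defined (finite or not) exactly when the other is and then they agree; but in fact by Lemma~\ref{lfin} (specifically (\ref{Iff}) and the representation (\ref{If})) every $\mu\in\mathcal E^+(A)$ already has $f$ $\mu$-integrable, so both integrals are finite. Adding $\|\mu\|^2$ to both sides gives (\ref{eq-dual}), and this in turn forces the definition $I_{\tilde f}(\mu):=\|\mu\|^2+2\int\tilde f\,d\mu$ to coincide with $I_f(\mu)$ on all of $\mathcal E^+(A)$, hence in particular on the smaller class $\breve{\mathcal E}^+(A)$. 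Taking the infimum over $\mu\in\breve{\mathcal E}^+(A)$ — which equals $\breve{\mathcal E}^+_f(A)$ by (\ref{Iff}), and likewise equals $\breve{\mathcal E}^+_{\tilde f}(A)$ by the same argument applied to $\tilde f$ in place of $f$ — yields $w_f(A)=w_{\tilde f}(A)$, which is (\ref{eq-dual3}); note $-\infty<w_f(A)<\infty$ by (\ref{lf1}), so Problem~\ref{pr} is meaningfully posed for both $f$ and $\tilde f$.

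Finally, since the functionals $I_f(\cdot)$ and $I_{\tilde f}(\cdot)$ agree on the whole feasible class $\breve{\mathcal E}^+(A)=\breve{\mathcal E}^+_f(A)=\breve{\mathcal E}^+_{\tilde f}(A)$ and their infima agree, a measure $\lambda\in\breve{\mathcal E}^+(A)$ attains $w_f(A)$ if and only if it attains $w_{\tilde f}(A)$; hence $\lambda_{A,f}$ exists iff $\lambda_{A,\tilde f}$ exists, and in that case both equal the unique minimizer of the common functional over the common class, so $\lambda_{A,f}=\lambda_{A,\tilde f}$. Uniqueness was already recorded after Problem~\ref{pr}. I do not expect any real obstacle here: the entire content is the equality $U^{\omega^A}=U^\omega$ n.e.\ on $A$ together with the standard fact that inner-capacity-zero subsets are negligible for finite-energy measures; the only point requiring a moment's care is to confirm that $\tilde f$ is universally measurable (so that $E$ is $\mu$-measurable and footnote~\ref{f-negl} applies), which holds because $U^{\omega^A}$, being the potential of a Radon measure, is a Borel function of $x$.
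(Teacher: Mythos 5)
Your argument is correct and is essentially the paper's own: the paper derives the lemma from $\tilde f=f$ n.e.\ on $A$ (via (\ref{ineq1}) with $\zeta:=\omega$), upgraded to a $\mu$-a.e.\ equality for every $\mu\in\mathcal E^+(A)$ by the negligibility fact in footnote~\ref{f-negl}, exactly as you do. Your additional checks (finiteness of the integrals via Lemma~\ref{lfin}, measurability of $\tilde f$) are fine and merely make explicit what the paper leaves implicit.
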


Problem~\ref{pr} with $\tilde{f}$ in place of $f$ may therefore be referred to as the {\it dual\/} problem. Since $\omega^A\in\mathcal E^+$, the dual problem can be treated by applying \cite[Sections~2.2--2.4]{Z-Rarx} with $\delta:=\tau:=\omega^A$, $\delta$ and $\tau$ being the measures appearing in \cite[Eq.~(2.8)]{Z-Rarx}.

In view of this observation, some of the proofs in the present research can be reduced to those in \cite{Z-Rarx}, performed for the (dual) external field $\tilde{f}=-U^{\omega^A}$.

\subsection{An auxiliary extremal problem} Theorem~\ref{th-bal1}(i$_1$) admits the following useful generalization.

\begin{theorem}\label{l-oo'}
The inner balayage $\omega^A$ is concentrated on $A$, i.e.\
$\omega^A\in\mathcal E^+(A)$, and it can actually be found as the unique solution to the problem of minimizing the Gauss functional $I_f(\mu)$, or equivalently $I_{\tilde{f}}(\mu)$, where $\mu$ ranges over $\mathcal E^+(A)$. Alternatively, $\omega^A$ is uniquely characterized
within $\mathcal E^+(A)$ by the equality $U^{\omega^A}=U^\omega$ n.e.\ on $A$.
\end{theorem}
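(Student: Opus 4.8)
The plan is to bootstrap from the finite-energy theory reviewed in Section~\ref{sec-baleq}: although $\omega\in\mathfrak M^+$ need not be of finite energy, hypothesis $(\mathcal H_3)$ guarantees $\omega^A\in\mathcal E^+$, so the machinery attached to the cone $\mathcal E^+$ applies to $\omega^A$ itself. First I would prove $\omega^A\in\mathcal E^+(A)$. By the balayage-with-a-rest property (Section~\ref{some}, (c)) applied with $Q:=A$ one has $(\omega^A)^A=\omega^A$; on the other hand, since $\omega^A\in\mathcal E^+$, Theorem~\ref{th-bal1} places $(\omega^A)^A$ in $\mathcal E'(A)$. Hence $\omega^A\in\mathcal E'(A)$, and by $(\mathcal H_1)$ (see (\ref{ee})) $\mathcal E'(A)=\mathcal E^+(A)$, so $\omega^A\in\mathcal E^+(A)$; in particular $\omega^A$ is concentrated on $A$.

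For the extremal characterization I would invoke (\ref{If}) together with (\ref{eq-dual}), which give $I_f(\mu)=I_{\tilde f}(\mu)=\|\mu-\omega^A\|^2-\|\omega^A\|^2$ for every $\mu\in\mathcal E^+(A)$. Since $\omega^A$ now belongs to the admissible class $\mathcal E^+(A)$, the nonnegative quantity $\|\mu-\omega^A\|^2$ is minimized over $\mathcal E^+(A)$ precisely, and by the strict positive definiteness of $\kappa_\alpha$ uniquely, at $\mu=\omega^A$, which is the claim. This also exhibits the statement as a generalization of Theorem~\ref{th-bal1}(i$_1$): minimizing $I_f$ over $\mathcal E^+(A)=\mathcal E'(A)$ amounts to minimizing $\|\omega^A-\mu\|$, whose (unique) solution by Theorem~\ref{th-bal1}(i$_1$) applied to $\sigma:=\omega^A$ is $(\omega^A)^A=\omega^A$.

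It remains to treat the potential-theoretic characterization. The equality $U^{\omega^A}=U^\omega$ n.e.\ on $A$ is (\ref{ineq1}) with $\zeta:=\omega$; for uniqueness, given $\mu\in\mathcal E^+(A)$ with $U^\mu=U^\omega$ n.e.\ on $A$, combining with (\ref{ineq1}) yields $U^\mu=U^{\omega^A}$ n.e.\ on $A$, hence $\mu$-a.e.\ and $\omega^A$-a.e.\ on $\mathbb R^n$ (footnote~\ref{f-negl}, both measures being of finite energy); integrating $U^\mu-U^{\omega^A}$ against $\mu-\omega^A$ then forces $\|\mu-\omega^A\|^2=0$, i.e.\ $\mu=\omega^A$. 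Alternatively one may quote Remark~\ref{r-h} for $\sigma:=\omega^A$, using $(\omega^A)^A=\omega^A$. I anticipate no real obstacle here: the only delicate point is the transition from the arbitrary measure $\omega$ to the finite-energy setting, and this is dispatched at once by $(\mathcal H_3)$ together with the idempotence $(\omega^A)^A=\omega^A$; the remainder is routine Hilbert-space manipulation in $\mathcal E$ and bookkeeping with the results already assembled in Sections~\ref{sec-baleq} and \ref{sec-prep}.
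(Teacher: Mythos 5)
Your proposal is correct and follows essentially the same route as the paper: both rest on the idempotence $(\omega^A)^A=\omega^A$ combined with Theorem~\ref{th-bal1} applied to $\sigma:=\omega^A\in\mathcal E^+$ and the identification $\mathcal E'(A)=\mathcal E^+(A)$ from $(\mathcal H_1)$, with the minimization reduced to $\|\mu-\omega^A\|^2$ via (\ref{If})/(\ref{eq-dual}). Your direct energy-integration argument for the uniqueness of the measure in $\mathcal E^+(A)$ with $U^\mu=U^\omega$ n.e.\ on $A$ is a harmless micro-variant of the paper's appeal to Theorem~\ref{th-bal1}(ii$_1$) (which you also note), the only bookkeeping point being that the passage from the two equalities n.e.\ on $A$ to their combination implicitly uses the subadditivity of Lemma~\ref{str-sub}, which the paper cites explicitly.
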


\begin{proof} Since $\omega^A=(\omega^A)^A$ (see Section~\ref{some}, (c) with $Q:=A$), substituting (\ref{ee}) into Theorem~\ref{th-bal1}(i$_1$) with
$\sigma:=\omega^A\in\mathcal E^+$ shows that, indeed, $\omega^A\in\mathcal E^+(A)$ as well as that $\omega^A$ is the orthogonal projection of itself onto $\mathcal E^+(A)$; or equivalently that $\omega^A$ is the (unique) solution to the problem of minimizing  $\|\mu\|^2-2\int U^{\omega^A}\,d\mu$, $\mu$ ranging over $\mathcal E^+(A)$. In view of (\ref{eq-dual1}) and (\ref{eq-dual}), this proves the former part of the theorem.

For the latter part, assume that the equality $U^{\omega^A}=U^\omega$ n.e.\ on $A$ is also fulfilled for some $\mu_0\in\mathcal E^+(A)$ in place of $\omega^A$. Then, applying Lemma~\ref{str-sub} yields
\[U^{\mu_0}=U^\omega=U^{\omega^A}\quad\text{n.e.\ on $A$},\]
whence $\mu_0=(\omega^A)^A$, by use of Theorem~\ref{th-bal1}(ii$_1$) with $\sigma:=\omega^A\in\mathcal E^+$. Combining this with   $(\omega^A)^A=\omega^A$ (see above) gives $\mu_0=\omega^A$, thereby completing
the whole proof.\end{proof}

$\P$ According to Theorem~\ref{l-oo'}, $\omega^A$ is, in fact, the (unique) solution to the (auxiliary) problem of minimizing the Gauss functional $I_f(\mu)$ over the class $\mathcal E^+(A)$. That is,
\begin{gather}
\label{g1}\omega^A\in\mathcal E^+(A),\\
\label{hatw}
I_f(\omega^A)=\min_{\mu\in\mathcal E^+(A)}\,I_f(\mu)=:\widehat{w}_f(A).
\end{gather}

Returning to Problem~\ref{pr}, we remark that, since $\breve{\mathcal E}^+(A)\subset\mathcal E^+(A)$,
\begin{equation}\label{hatww}\widehat{w}_f(A)\leqslant w_f(A).\end{equation}
The case where equality prevails in (\ref{hatww}), will be of particular interest.

\begin{theorem}\label{equality}
Assume that
\begin{equation}\label{mass1}
\omega^A(\mathbb R^n)=1.\end{equation}
Then the solution $\lambda_{A,f}$ to Problem~{\rm\ref{pr}} does exist, and moreover
\begin{equation}\label{hatww'}\lambda_{A,f}=\omega^A,\quad w_f(A)=\widehat{w}_f(A),\quad c_{A,f}=0.\end{equation}
\end{theorem}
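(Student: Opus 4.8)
\textbf{Proof proposal for Theorem~\ref{equality}.}
The plan is to show that under the mass normalization \eqref{mass1}, the measure $\omega^A$ is already admissible for Problem~\ref{pr}, and that it is the minimizer there. First I would observe that $\omega^A\in\mathcal E^+(A)$ by \eqref{g1} (Theorem~\ref{l-oo'}), and that $\omega^A(\mathbb R^n)=1$ by hypothesis, so $\omega^A\in\breve{\mathcal E}^+(A)$; combined with \eqref{Iff} this gives $\omega^A\in\breve{\mathcal E}^+_f(A)$, so $\omega^A$ is a competitor in Problem~\ref{pr}. Next, since $\breve{\mathcal E}^+(A)\subset\mathcal E^+(A)$, the auxiliary minimum $\widehat{w}_f(A)$ over $\mathcal E^+(A)$ is a lower bound for $I_f(\mu)$ over $\breve{\mathcal E}^+(A)$, i.e.\ \eqref{hatww} holds; but by \eqref{hatw} this lower bound is attained precisely at $\omega^A$, which lies in the smaller class $\breve{\mathcal E}^+(A)$. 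Hence
\[
w_f(A)=\inf_{\mu\in\breve{\mathcal E}^+(A)}I_f(\mu)\geqslant\widehat{w}_f(A)=I_f(\omega^A)\geqslant\inf_{\mu\in\breve{\mathcal E}^+(A)}I_f(\mu)=w_f(A),
\]
so all inequalities are equalities: $w_f(A)=\widehat{w}_f(A)=I_f(\omega^A)$, which shows both that $\lambda_{A,f}$ exists and that $\lambda_{A,f}=\omega^A$ (uniqueness of the minimizer being already known). This settles the first two assertions in \eqref{hatww'}.

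It remains to compute $c_{A,f}$. By \eqref{cc} in Theorem~\ref{th-ch2}, $c_{A,f}=w_f(A)-\int f\,d\lambda_{A,f}$. Using $\lambda_{A,f}=\omega^A$, the definition $f=-U^\omega$, and the identity $U^{\omega^A}=U^\omega$ n.e.\ on $A$ (hence $\omega^A$-a.e.\ on $\mathbb R^n$ by footnote~\ref{f-negl}), I would write $\int f\,d\omega^A=-\int U^\omega\,d\omega^A=-\int U^{\omega^A}\,d\omega^A=-\|\omega^A\|^2$, while $w_f(A)=I_f(\omega^A)=\|\omega^A\|^2-2\|\omega^A\|^2=-\|\omega^A\|^2$ by Lemma~\ref{lfin} (or directly from $f=-U^\omega$ as above). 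Therefore $c_{A,f}=-\|\omega^A\|^2-(-\|\omega^A\|^2)=0$, giving the third relation in \eqref{hatww'}.

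None of the steps presents a genuine obstacle: the argument is essentially a chain of inclusions $\breve{\mathcal E}^+(A)\subset\mathcal E^+(A)$ together with the already-established fact (Theorem~\ref{l-oo'}) that the unconstrained minimizer $\omega^A$ happens to have unit mass and so lies in the constrained class. The only point requiring a little care is the evaluation of $\int f\,d\omega^A$, where one must justify replacing $U^\omega$ by $U^{\omega^A}$ under the integral: this uses that the two potentials agree n.e.\ on $A$ (Theorem~\ref{l-oo'} or \eqref{ineq1}) and that sets of inner capacity zero are $\omega^A$-negligible since $\omega^A\in\mathcal E^+$ (footnote~\ref{f-negl}). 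If one prefers, the computation of $w_f(A)$ can be bypassed entirely by invoking \eqref{cc} together with $U^{\lambda_{A,f}}_f=U^{\omega^A}-U^\omega=0$ n.e.\ on $A$, which forces $c_{A,f}=\int U^{\lambda_{A,f}}_f\,d\lambda_{A,f}=0$ directly.
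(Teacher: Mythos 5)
Your proposal is correct and follows essentially the same route as the paper: the chain $\widehat{w}_f(A)\leqslant w_f(A)\leqslant I_f(\omega^A)=\widehat{w}_f(A)$ based on $\omega^A\in\breve{\mathcal E}^+(A)$ and Theorem~\ref{l-oo'}, and then $c_{A,f}=0$ (your closing alternative via $U_f^{\lambda_{A,f}}=U^{\omega^A}-U^\omega=0$ n.e.\ on $A$, hence $\lambda_{A,f}$-a.e., is exactly the paper's argument, while your direct computation of $w_f(A)$ and $\int f\,d\omega^A$ is an equivalent variant).
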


\begin{proof}
Noting from (\ref{g1}) and (\ref{mass1}) that $\omega^A\in\breve{\mathcal E}^+(A)$, we conclude by making use of (\ref{lf1'}), (\ref{hatw}), and (\ref{hatww}) that
\[\widehat{w}_f(A)\leqslant w_f(A)\leqslant I_f(\omega^A)=\widehat{w}_f(A),\]
which implies the existence of $\lambda_{A,f}$ as well as the first two equalities in (\ref{hatww'}). Since $\lambda_{A,f}=\omega^A$, applying (\ref{ineq1}) to $\omega$ gives $U_f^{\lambda_{A,f}}=U^{\omega^A}-U^\omega=0$
n.e.\ on $A$, hence $\lambda_{A,f}$-a.e. Therefore, in view of (\ref{cc}),
$c_{A,f}=\int U_f^{\lambda_{A,f}}\,d\lambda_{A,f}=0$, and the proof is complete.
\end{proof}

\subsection{Extremal measures} Taking into account that $\breve{\mathcal E}^+_f(A)=\breve{\mathcal E}^+(A)$ (Lemma~\ref{lfin}), we call a net $(\mu_s)_{s\in S}$ {\it minimizing} in Problem~\ref{pr} if $(\mu_s)_{s\in S}\subset\breve{\mathcal E}^+(A)$ and
\begin{equation}\label{min}
\lim_{s\in S}\,I_f(\mu_s)=w_f(A);
\end{equation}
let $\mathbb M_f(A)$ denote the class of all those $(\mu_s)_{s\in S}$. Since $w_f(A)$ is finite (see (\ref{lf1})),
\[\mathbb M_f(A)\ne\varnothing.\]

\begin{lemma}\label{l-extr}
  There exists the unique $\xi_{A,f}\in\mathcal E^+(A)$, called the extremal measure in Problem~{\rm\ref{pr}}, and such that for every minimizing net $(\mu_s)_{s\in S}\in\mathbb M_f(A)$,
  \begin{equation}\label{m-extr}
    \mu_s\to\xi_{A,f}\quad\text{strongly and vaguely in $\mathcal E^+(A)$}.
  \end{equation}
This yields
\begin{equation}\label{ext-eq1}
I_f(\xi_{A,f})=w_f(A).
\end{equation}
 \end{lemma}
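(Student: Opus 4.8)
The plan is to convert Problem~\ref{pr} into a nearest-point problem in the pre-Hilbert space $\mathcal E$ via the quadratic representation
$I_f(\mu)=\|\mu-\omega^A\|^2-\|\omega^A\|^2$ established in Lemma~\ref{lfin}. Minimizing $I_f(\cdot)$ over $\breve{\mathcal E}^+(A)$ is then the same as minimizing $\|\mu-\omega^A\|$ over the \emph{convex} class $\breve{\mathcal E}^+(A)$, and the existence and uniqueness of the extremal measure becomes a standard convexity-plus-completeness argument.

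First I would show that every minimizing net is strong Cauchy. Fix $(\mu_s)_{s\in S}\in\mathbb M_f(A)$. For any $s,t\in S$, convexity of $\breve{\mathcal E}^+(A)$ gives $\tfrac12(\mu_s+\mu_t)\in\breve{\mathcal E}^+(A)$, hence $I_f\bigl(\tfrac12(\mu_s+\mu_t)\bigr)\geqslant w_f(A)$. Applying the parallelogram identity in $\mathcal E$ to $\mu_s-\omega^A$ and $\mu_t-\omega^A$, the $\|\omega^A\|^2$-terms cancel and one obtains
\[
\|\mu_s-\mu_t\|^2=2I_f(\mu_s)+2I_f(\mu_t)-4I_f\Bigl(\tfrac{\mu_s+\mu_t}{2}\Bigr)\leqslant 2I_f(\mu_s)+2I_f(\mu_t)-4w_f(A).
\]
Since $I_f(\mu_s)\to w_f(A)$, a finite number by (\ref{lf1}), the right-hand side tends to $0$ along $S\times S$, so $(\mu_s)_{s\in S}$ is strong Cauchy. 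By Lemma~\ref{f-strcont}, it then converges both strongly and vaguely to a (unique) limit in $\mathcal E^+(A)$, and $I_f$ of that limit equals $\lim_{s\in S}I_f(\mu_s)=w_f(A)$, which will give (\ref{ext-eq1}) once the limit is shown to be net-independent.

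Next I would prove that this limit does not depend on the chosen minimizing net, and call it $\xi_{A,f}$. Given a second net $(\nu_t)_{t\in T}\in\mathbb M_f(A)$, I splice the two into a single net indexed by $S\times T\times\{1,2\}$, where $\{1,2\}$ carries the (directed) preorder in which each of the two elements dominates the other; the spliced term is $\mu_s$ when the last coordinate is $1$ and $\nu_t$ when it is $2$. Because both original nets are minimizing, the spliced net is again minimizing, hence strong Cauchy by the previous step, hence strongly convergent; but $(\mu_s)_{s\in S}$ and $(\nu_t)_{t\in T}$ are cofinal subnets of it, so their limits must coincide with the limit of the spliced net, and therefore with each other. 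This yields the measure $\xi_{A,f}\in\mathcal E^+(A)$ with property (\ref{m-extr}); its uniqueness among measures in $\mathcal E^+(A)$ satisfying (\ref{m-extr}) is immediate, since any such measure is the vague limit of some minimizing net and the vague topology on $\mathfrak M$ is Hausdorff.

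The only subtle point is the last step: one cannot identify $\xi_{A,f}$ as "the minimizer in $\breve{\mathcal E}^+(A)$'' because $\xi_{A,f}$ need not have total mass $1$ (indeed that is precisely the obstruction to solvability of Problem~\ref{pr}), so the net-independence of the limit has to be obtained directly, via the splicing/subnet device above. Everything else — the parallelogram computation and the invocation of strong completeness and strong continuity of $I_f(\cdot)$ on $\mathcal E^+(A)$ from Lemma~\ref{f-strcont} — is routine.
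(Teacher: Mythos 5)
Your argument is correct and is precisely the ``standard argument based on the convexity of $\breve{\mathcal E}^+(A)$, the pre-Hilbert structure on $\mathcal E$, the perfectness of the kernel, and the strong completeness of $\mathcal E^+(A)$'' that the paper's proof invokes by reference (to \cite[Lemma~4.1]{Z-Rarx}) without writing out: the parallelogram computation showing minimizing nets are strong Cauchy, completeness giving the strong and vague limit, and strong continuity of $I_f(\cdot)$ giving (\ref{ext-eq1}). The only cosmetic remark is that your splicing/subnet device (whose ``cofinal subnet'' step is slightly loose, since a fixed third coordinate need not be cofinal in the product) is unnecessary: applying your own parallelogram estimate directly to $\mu_s$ and $\nu_t$ taken from the two nets gives $\|\mu_s-\nu_t\|^2\leqslant2I_f(\mu_s)+2I_f(\nu_t)-4w_f(A)\to0$, whence the two strong limits coincide.
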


\begin{proof} Relation (\ref{m-extr}) follows by standard arguments (cf.\ \cite[Proof of Lemma~4.1]{Z-Rarx}), based on the convexity of the class $\breve{\mathcal E}^+(A)$, a pre-Hil\-bert structure on the space $\mathcal E$, the perfectness of the Riesz kernel, and the strong completeness of the cone $\mathcal E^+(A)$ (see Lemma~\ref{f-strcont}). Since the Gauss functional $I_f(\cdot)$ is strongly continuous on $\mathcal E^+(A)$ (Lemma~\ref{f-strcont}), (\ref{ext-eq1}) is obtained by combining (\ref{min}) and (\ref{m-extr}).

Alternatively, this can be derived from \cite{Z-Rarx} (Lemmas~4.1 and 4.8, applied to $\tilde{f}$) by noting from (\ref{eq-dual3}) and (\ref{min}) that $\mathbb M_f(A)=\mathbb M_{\tilde{f}}(A)$, whence $\xi_{A,f}:=\xi_{A,\tilde{f}}$.\end{proof}

By use of the fact that the mapping $\mu\mapsto\mu(\mathbb R^n)$ is vaguely l.s.c.\ on $\mathfrak M^+$ \cite[Section~IV.1, Proposition~4]{B2}, we infer from (\ref{m-extr}) that,
in general,
\begin{equation}\label{ext-eq4}
\xi_{A,f}(\mathbb R^n)\leqslant1.
\end{equation}

\begin{corollary}\label{l-extr5}$\lambda_{A,f}$ exists if and only if equality prevails in {\rm(\ref{ext-eq4})}, i.e.
\begin{equation}\label{ext-eq2}
\xi_{A,f}(\mathbb R^n)=1,
\end{equation}
and in the affirmative case
\begin{equation}\label{ext-eq3}
\xi_{A,f}=\lambda_{A,f}.
\end{equation}
\end{corollary}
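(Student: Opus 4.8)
The plan is to prove Corollary~\ref{l-extr5} by combining the strong and vague convergence in Lemma~\ref{l-extr} with the strong continuity of the Gauss functional on $\mathcal E^+(A)$, already recorded in Lemma~\ref{f-strcont}. First I would settle the ``if'' direction: assume $\xi_{A,f}(\mathbb R^n)=1$. Then $\xi_{A,f}\in\breve{\mathcal E}^+(A)=\breve{\mathcal E}^+_f(A)$ (the equality of these classes being (\ref{Iff})), and by (\ref{ext-eq1}) we have $I_f(\xi_{A,f})=w_f(A)$; thus $\xi_{A,f}$ is a minimizer, which by the uniqueness of the solution to Problem~\ref{pr} forces $\xi_{A,f}=\lambda_{A,f}$, giving both the existence of $\lambda_{A,f}$ and (\ref{ext-eq3}).

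For the ``only if'' direction, suppose $\lambda_{A,f}$ exists. The natural move is to observe that the constant net equal to $\lambda_{A,f}$ lies in $\breve{\mathcal E}^+(A)$ and satisfies $I_f(\lambda_{A,f})=w_f(A)$, hence is a minimizing net in the sense of (\ref{min}), i.e.\ it belongs to $\mathbb M_f(A)$. Applying (\ref{m-extr}) to this constant net, strong (and vague) convergence to $\xi_{A,f}$ then yields $\lambda_{A,f}=\xi_{A,f}$ outright, and since $\lambda_{A,f}(\mathbb R^n)=1$ we obtain (\ref{ext-eq2}). This simultaneously re-proves (\ref{ext-eq3}). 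Alternatively, to avoid invoking a constant net one can take any $(\mu_s)\in\mathbb M_f(A)$, use (\ref{m-extr}) to get $\mu_s\to\xi_{A,f}$ strongly, hence $I_f(\mu_s)\to I_f(\xi_{A,f})$ by Lemma~\ref{f-strcont}, so $I_f(\xi_{A,f})=w_f(A)$; then the uniqueness of $\xi_{A,f}$ together with the existence of some minimizer $\lambda\in\breve{\mathcal E}^+(A)$, which again generates an element of $\mathbb M_f(A)$, identifies $\xi_{A,f}$ with $\lambda$, whence $\xi_{A,f}(\mathbb R^n)=1$.

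I do not expect a serious obstacle here: the statement is essentially a bookkeeping corollary that isolates the one place where the total-mass constraint can be lost, namely the passage to the vague limit, where only the inequality (\ref{ext-eq4}) survives because $\mu\mapsto\mu(\mathbb R^n)$ is merely vaguely lower semicontinuous. The only point requiring a modicum of care is making sure that a minimizer, once it exists, genuinely qualifies as (or spawns) a minimizing net in $\mathbb M_f(A)$ so that Lemma~\ref{l-extr} applies to it; this is immediate from the definitions of $\mathbb M_f(A)$ and of the infimum $w_f(A)$ together with $\breve{\mathcal E}^+_f(A)=\breve{\mathcal E}^+(A)$. Everything else is a direct citation of Lemmas~\ref{lfin}, \ref{f-strcont} and \ref{l-extr} and the uniqueness of $\lambda_{A,f}$ noted after Problem~\ref{pr}.
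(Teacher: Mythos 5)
Your proof is correct and follows essentially the same route as the paper: the "if" direction combines $\xi_{A,f}\in\mathcal E^+(A)$, unit mass, and (\ref{ext-eq1}) with the uniqueness of the minimizer, while the "only if" direction applies Lemma~\ref{l-extr} to the trivial minimizing net $(\lambda_{A,f})$ and uses the Hausdorff property of the strong topology to identify the limit. Nothing to add.
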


\begin{proof}
If (\ref{ext-eq2}) holds, then combining it with $\xi_{A,f}\in\mathcal E^+(A)$ and (\ref{ext-eq1}) gives (\ref{ext-eq3}). For the opposite, assume $\lambda_{A,f}$ exists. Since the trivial sequence $(\lambda_{A,f})$ is obviously minimizing, it must converge strongly to both $\lambda_{A,f}$ and $\xi_{A,f}$ (Lemma~\ref{l-extr}), which immediately results in (\ref{ext-eq3}), the strong topology on $\mathcal E$ being Hausdorff.

Alternatively, this can be deduced from \cite{Z-Rarx} (see the latter part of Lemma~4.8, applied to $\tilde{f}$) on account of the relations $\xi_{A,f}=\xi_{A,\tilde{f}}$ and $\lambda_{A,f}=\lambda_{A,\tilde{f}}$ (see above).
\end{proof}

\begin{lemma}\label{l-pot}
 For the extremal measure $\xi:=\xi_{A,f}$, we have
 \begin{equation}\label{e-pot1}
 U^\xi_f\geqslant C_\xi\quad\text{n.e.\ on $A$},
 \end{equation}
  where
 \begin{equation}\label{Cxi}
  C_\xi:=\int U^\xi_f\,d\xi\in(-\infty,\infty).
 \end{equation}
\end{lemma}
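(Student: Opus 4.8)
\textbf{Proof proposal for Lemma~\ref{l-pot}.}
The plan is to mimic the variational argument that underlies Theorem~\ref{th-ch2}, but applied to the extremal measure $\xi:=\xi_{A,f}$ in place of a genuine minimizer. First I would record that $\xi\in\mathcal E^+(A)$ (Lemma~\ref{l-extr}), so that $\int f\,d\xi=-\langle\omega^A,\xi\rangle\in(-\infty,\infty)$ by the computation in the proof of Lemma~\ref{lfin}; together with $\|\xi\|<\infty$ this shows $C_\xi=\int U^\xi_f\,d\xi=\|\xi\|^2+\int f\,d\xi$ is a finite real number, which is the assertion (\ref{Cxi}). The only subtlety is that $\xi(\mathbb R^n)\le 1$ need not equal $1$, so $\xi$ itself may fail to lie in $\breve{\mathcal E}^+(A)$; nonetheless $I_f(\xi)=w_f(A)$ by (\ref{ext-eq1}).

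Next I would argue (\ref{e-pot1}) by contradiction, following the classical scheme. Suppose the inner-capacity statement fails, i.e.\ the set $E:=\{x\in A:\ U^\xi_f(x)<C_\xi\}$ has $c_*(E)>0$. Then by inner regularity of capacity there is a compact $K\subset E$ with $c_*(K)=c(K)\in(0,\infty)$; on $K$ we have $U^\xi_f<C_\xi$, and since $U^\xi$ is bounded below on any compact set of finite capacity and $f=-U^\omega$ is finite n.e., one can further shrink $K$ so that $\sup_K U^\xi_f<C_\xi$ and $U^\xi_f$ is bounded on $K$. Let $\gamma_K\in\mathcal E^+(K)\subset\mathcal E^+(A)$ be the (classical) equilibrium measure of $K$, normalized so that $\gamma_K(\mathbb R^n)=1$ after rescaling — call this normalized measure $\tau\in\breve{\mathcal E}^+(K)\subset\breve{\mathcal E}^+(A)$; note $\int U^\xi_f\,d\tau\le\sup_K U^\xi_f<C_\xi$. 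Now form the perturbed measure $\mu_t:=\xi+t(\tau-\xi)$ for $t\in[0,1]$. Since $\xi(\mathbb R^n)\le1$ and $\tau(\mathbb R^n)=1$, we have $\mu_t(\mathbb R^n)=\xi(\mathbb R^n)+t\bigl(1-\xi(\mathbb R^n)\bigr)\in[\xi(\mathbb R^n),1]$, which is $\le1$ but may be $<1$; this is exactly where one must be careful, so I would instead use $\mu_t$ only to test the \emph{auxiliary} functional or renormalize.

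The cleanest route: compute the quadratic $I_f(\mu_t)$ in $t$. Expanding,
\[
I_f(\mu_t)=I_f(\xi)+2t\Bigl(\langle\xi,\tau-\xi\rangle+\int f\,d(\tau-\xi)\Bigr)+t^2\|\tau-\xi\|^2
=w_f(A)+2t\Bigl(\int U^\xi_f\,d\tau-C_\xi\Bigr)+t^2\|\tau-\xi\|^2,
\]
using $\int U^\xi_f\,d\xi=C_\xi$ and bilinearity. The coefficient of $t$ is $2\bigl(\int U^\xi_f\,d\tau-C_\xi\bigr)<0$ by our choice of $\tau$, so $I_f(\mu_t)<w_f(A)$ for all small $t>0$. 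But $\mu_t\in\mathcal E^+(A)$, hence $I_f(\mu_t)\ge\widehat w_f(A)$; more to the point, since $w_f(A)=\widehat w_f(A)$ exactly when $\omega^A(\mathbb R^n)=1$ (Theorem~\ref{equality}), I cannot in general conclude a contradiction from the auxiliary bound alone. The resolution is to rescale: set $\nu_t:=\mu_t/\mu_t(\mathbb R^n)\in\breve{\mathcal E}^+(A)$ and check that $I_f(\nu_t)\to w_f(A)$ has negative derivative at $t=0^+$ as well — here one uses $\mu_0(\mathbb R^n)=\xi(\mathbb R^n)$ and that the map $t\mapsto\mu_t(\mathbb R^n)$ is affine, so $\tfrac{d}{dt}I_f(\nu_t)\big|_{0^+}$ is a finite linear combination of the (finite) quantities $\int U^\xi_f\,d\tau-C_\xi<0$, $w_f(A)$, $C_\xi$, $\int f\,d\tau$, and $1-\xi(\mathbb R^n)\ge0$, and a short computation shows it is strictly negative whenever $\int U^\xi_f\,d\tau<C_\xi$ (the contribution from renormalization being a nonnegative multiple of $C_\xi-w_f(A)\ge0$, since $w_f(A)\le I_f(\tau')$ for the renormalized $\tau$... this inequality needs $C_\xi\le w_f(A)$, which I would verify separately from $U^\xi_f\ge C_\xi$ failing only on a negligible-complement set). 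Contradicting the infimum definition of $w_f(A)$, we conclude $c_*(E)=0$, i.e.\ (\ref{e-pot1}) holds.

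\textbf{Main obstacle.} The genuinely delicate point is the renormalization step: because $\xi$ need not have unit mass, the perturbed measures $\mu_t$ live in $\mathcal E^+(A)$ rather than $\breve{\mathcal E}^+(A)$, and one must divide by $\mu_t(\mathbb R^n)$ before comparing against $w_f(A)$. Controlling the sign of the derivative of $t\mapsto I_f(\mu_t/\mu_t(\mathbb R^n))$ at $0^+$ requires knowing $C_\xi\le w_f(A)$ (so that the renormalization penalty, proportional to $C_\xi-w_f(A)$ times $1-\xi(\mathbb R^n)\ge0$, has the favorable sign). I would establish $C_\xi\le w_f(A)$ by a separate short argument — e.g.\ by noting that if $U^\xi_f\ge C_\xi$ failed to even hold "on average" one could already lower $I_f$ — or, more robustly, by invoking the parallelogram-type estimate that any strong limit of a minimizing net satisfies $\int U^\xi_f\,d\xi\le w_f(A)$ because $I_f(\xi)=w_f(A)=\|\xi\|^2+2\int f\,d\xi$ forces $\|\xi\|^2+\int f\,d\xi\le w_f(A)$ once $\int f\,d\xi\le 0$ (the latter following from $f=-U^\omega\le0$ wherever $U^\omega\ge0$, which is everywhere). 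This sign bookkeeping is the heart of the matter; everything else is the standard equilibrium-measure perturbation.
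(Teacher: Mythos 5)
Your verification of (\ref{Cxi}) is fine, but the perturbation argument for (\ref{e-pot1}) has a genuine gap, located exactly where you flagged your "main obstacle", and it is not repairable along the lines you sketch. The whole content of Lemma~\ref{l-pot} lies in the case $\xi(\mathbb R^n)<1$ (when $\xi(\mathbb R^n)=1$ the extremal measure is the minimizer $\lambda_{A,f}$ and the statement is just Theorem~\ref{th-ch2}). In that case your renormalized curve $\nu_t:=\mu_t/\mu_t(\mathbb R^n)$ starts at $\nu_0=\xi/m$, $m:=\xi(\mathbb R^n)<1$, and there is no reason why $I_f(\nu_0)$ should equal $w_f(A)$: one computes $I_f(\nu_0)-w_f(A)=(1/m-1)\bigl(\|\xi\|^2(1/m+1)+2\int f\,d\xi\bigr)$, which is nonnegative (since $\nu_0\in\breve{\mathcal E}^+(A)$) and in general strictly positive. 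A strictly negative derivative of $t\mapsto I_f(\nu_t)$ at $t=0^+$ therefore yields no contradiction with the definition of $w_f(A)$, because you are perturbing away from a competitor that already sits strictly above the infimum. Worse, the auxiliary inequality $C_\xi\leqslant w_f(A)$ on which your sign bookkeeping depends is false in general, and your own derivation of it contains a sign error: from $I_f(\xi)=\|\xi\|^2+2\int f\,d\xi=w_f(A)$ and $\int f\,d\xi\leqslant0$ one gets $C_\xi=\|\xi\|^2+\int f\,d\xi=w_f(A)-\int f\,d\xi\geqslant w_f(A)$, i.e.\ the \emph{opposite} inequality (consistent with (\ref{cc}), where $c_{A,f}=w_f(A)-\int f\,d\lambda$). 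So the "renormalization penalty" has the unfavorable sign precisely when it is nonzero.

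The paper circumvents this by never varying at $\xi$ itself. Its direct proof exhausts $A$ by compacts $K\in\mathfrak C_A$ with $c(K)>0$: on each $K$ the probability measures form a vaguely compact class, so $\xi_{K,f}(\mathbb R^n)=1$ and $\xi_{K,f}=\lambda_{K,f}$ is a genuine minimizer to which Theorem~\ref{th-ch2} applies, giving $U_f^{\lambda_{K,f}}\geqslant c_{K,f}$ n.e. It then shows $w_f(K)\downarrow w_f(A)$, so that $(\lambda_{K,f})_{K\in\mathfrak C_A}$ is a minimizing net converging strongly (hence, along a subsequence, with potentials converging pointwise n.e.) to $\xi$, verifies $c_{K_j,f}\to C_\xi$, and passes to the limit in those inequalities; alternatively, it reduces everything to the dual field $\tilde f=-U^{\omega^A}$ and quotes the corresponding result from \cite{Z-Rarx}. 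If you wish to keep a variational flavor, the variation must be performed at the compact level, where the minimizer exists and has unit mass, followed by a limit passage --- which is exactly the paper's route.
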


\begin{proof} Since $\xi_{A,f}=\xi_{A,\tilde{f}}$ while $f=\tilde{f}$ n.e.\ on $A$, we have
$U_f^{\xi_{A,f}}=U_{\tilde{f}}^{\xi_{A,\tilde{f}}}$ n.e.\ on $A$,
hence $\xi_{A,f}$-a.e., $\xi_{A,f}$ being of the class $\mathcal E^+(A)$ (Lemma~\ref{l-extr}). Therefore,
\[\int U_f^{\xi_{A,f}}\,d\xi_{A,f}=\int U_{\tilde{f}}^{\xi_{A,\tilde{f}}}\,d\xi_{A,\tilde{f}},\]
and (\ref{e-pot1}), (\ref{Cxi}) follow by use of \cite[Eqs.~(4.28), (4.29)]{Z-Rarx}, applied to $\tilde{f}$.

Regarding a direct proof of this lemma, its scheme is the following.
First, for each $K\in\mathfrak C_A$ with $c(K)>0$, there is the solution $\lambda_{K,f}$ to Problem~\ref{pr} with $A:=K$, for $\xi_{K,f}(\mathbb R^n)=1$, the class of all probability measures carried by $K$ being vaguely compact \cite[Section~III.1.9, Corollary~3]{B2}, whence $\xi_{K,f}=:\lambda_{K,f}$ (Corollary~\ref{l-extr5}).

Next, those $\lambda_{K,f}$ form a minimizing net, i.e.\ $(\lambda_{K,f})_{K\in\mathfrak C_A}\in\mathbb M_f(A)$, for \[w_f(K)\downarrow w_f(A)\quad\text{as $K\uparrow A$},\]
which in turn follows in a manner similar to that in the proof of Eq.~(4.10) in \cite{Z-Rarx}, by applying \cite[Lemma~1.2.2]{F1} to each of the positive, l.s.c., $\mu$-int\-eg\-r\-ab\-le functions $\kappa_\alpha$ and $U^\omega$, $\mu\in\breve{\mathcal E}^+(A)$ being arbitrary. (As for the $\mu$-integrability of $U^\omega$, see Lemma~\ref{lfin}.)

Thus $\lambda_{K,f}\to\xi_{A,f}$ strongly as $K\uparrow A$ (Lemma~\ref{l-extr}). The strong topology on $\mathcal E$ being first-coun\-t\-ab\-le, there exists a subsequence $(K_j)$ of the net $(K)_{K\in\mathfrak C_A}$ such that
\begin{equation*}U^{\lambda_{K_j,f}}_f\to U^{\xi_{A,f}}_f\quad\text{pointwise n.e.\ on $\mathbb R^n$ as $j\to\infty$,}\end{equation*}
cf.\ \cite[p.~166, Remark]{F1}. Now, applying (\ref{1}) to each of those $\lambda_{K_j,f}$, and then passing to the limits in the inequalities thereby obtained, we arrive at the claim, for
\begin{align*}\lim_{j\to\infty}\,c_{K_j,f}&=\lim_{j\to\infty}\,\Bigl(\|\lambda_{K_j,f}\|^2-\int U^\omega\,d\lambda_{K_j,f}\Bigr)=\lim_{j\to\infty}\,\bigl(\|\lambda_{K_j,f}\|^2-\langle\omega^A,\lambda_{K_j,f}\rangle\bigr)\\
{}&=\|\xi_{A,f}\|^2-\langle\omega^A,\xi_{A,f}\rangle=\|\xi_{A,f}\|^2-\int U^\omega\,d\xi_{A,f}=\int U^{\xi_{A,f}}_f\,d\xi_{A,f}.\end{align*}
(While doing that, we have again utilized the strengthened
version of countable subadditivity for inner capacity, presented by Lemma~\ref{str-sub}.)
 \end{proof}

\section{Proofs of the main assertions}\label{sec-proofs}

\subsection{Proof of Theorem~\ref{th-main} (the beginning)}\label{subs1} We first note that, if $\omega^A(\mathbb R^n)=1$, then the solution $\lambda_{A,f}$ does indeed exist, and moreover (\ref{C1'}) holds (see Theorem~\ref{equality}).

We shall next show that (\ref{capf}) is another sufficient condition for the existence of $\lambda_{A,f}$. According to Corollary~\ref{l-extr5}, this will follow once we prove that the extremal measure $\xi:=\xi_{A,f}$ has unit total mass, or equivalently (cf.\ Lemma~\ref{l-extr})
\begin{equation}\label{extun}
 \xi\in\breve{\mathcal E}^+(A).
\end{equation}
But the cone $\breve{\mathcal E}^+(A)$ is strongly closed, which is seen from $(\mathcal H_1)$ and (\ref{capf}) by applying   \cite[Theorem~3.6]{Z-Rarx}.  Since $\xi$ is the strong limit of a minimizing net $(\mu_s)_{s\in S}\in\mathbb M_f(A)$ (Lemma~\ref{l-extr}), whereas $(\mu_s)_{s\in S}\subset\breve{\mathcal E}^+(A)$, this gives (\ref{extun}), whence the claim.\footnote{A slight modification of these arguments shows that $c_*(A)<\infty$
still guarantees the solvability of Problem~\ref{pr} even if the external field $f$ is replaced by that of the form $f+u$, where $f$ is as above, while $u:\overline{A}\to(-\infty,\infty]$ is l.s.c., bounded from below, and such that $c_*(\{x\in A:\ u(x)<\infty\})>0$.}

To complete the proof of the "if" part, it remains to verify that $\lambda_{A,f}$ does exist if
\begin{equation}\label{strin3}
\omega^A(\mathbb R^n)>1.
\end{equation}
Assume first that $C_\xi\geqslant0$, $C_\xi$ being the (finite) number introduced by (\ref{Cxi}). Then, by virtue of (\ref{e-pot1}),
\[U^\xi\geqslant U^\omega+C_\xi\geqslant U^\omega\quad\text{n.e.\ on $A$},\]
which combined with $U^\omega=U^{\omega^A}$ n.e.\ on $A$ implies, by use of Lemma~\ref{str-sub}, that
\begin{equation}\label{innn}
U^\xi\geqslant U^{\omega^A}\quad\text{n.e.\ on $A$}.
\end{equation}
Noting that the measures $\omega^A$ and $\xi$ belong to $\mathcal E^+(A)$ (see Theorem~\ref{l-oo'} and Lemma~\ref{l-extr}, respectively), we infer from (\ref{innn}) by applying the principle of positivity of mass in the form stated in \cite[Theorem~1.5]{Z-Deny} that\footnote{Here we have used the fact that $\omega^A,\xi$ are $C$-absolutely continuous, being of finite energy.}
\[\xi(\mathbb R^n)\geqslant\omega^A(\mathbb R^n),\]
which in view of (\ref{ext-eq4}) contradicts (\ref{strin3}). We thus necessarily have
\begin{equation}\label{neg}
 C_\xi<0.
\end{equation}

Since (\ref{e-pot1}) holds true $\xi$-a.e., integrating it with respect to $\xi$ gives
\[C_\xi=\int U^\xi_f\,d\xi\geqslant C_\xi\cdot\xi(\mathbb R^n),\]
the equality being valid by (\ref{Cxi}). On account of (\ref{neg}), this implies
$\xi(\mathbb R^n)\geqslant1$, which together with (\ref{ext-eq4}) shows that, in fact,
$\xi(\mathbb R^n)=1$.
According to Corollary~\ref{l-extr5}, this proves the claimed solvability of Problem~\ref{pr} (with $\xi$ serving as the solution $\lambda_{A,f}$). Substituting $\xi=\lambda_{A,f}$ into (\ref{Cxi}), and then comparing the result obtained with (\ref{cc}), we also get $C_\xi=c_{A,f}$, which together with (\ref{neg}) establishes (\ref{C2'}).

Our next aim is to verify that Problem~\ref{pr} is unsolvable whenever\footnote{We provide here a direct proof of this claim, although it can actually be derived from \cite[Theorem~2.7]{Z-Rarx} with $\delta:=\tau:=\omega^A$ by use of the fact that $(\omega^A)^A=\omega^A$ (Section~\ref{some}, (c) with $Q:=A$).}
\begin{equation}\label{ii''}c_*(A)=\infty\quad\text{and}\quad\omega^A(\mathbb R^n)<1.\end{equation}
Due to the former of these assumptions, $c_*(A\setminus K)=\infty$ for any compact $K\subset\mathbb R^n$, and hence there exists a sequence $(\tau_j)\subset\breve{\mathcal E}^+(A)$ with
\begin{equation}\label{ii'''}\|\tau_j\|<1/j,\quad S(\tau_j)\subset\{|x|\geqslant j\}.\end{equation}
Define
\begin{equation}\label{muj}
\mu_j:=\omega^A+q\tau_j,\quad\text{where $q:=1-\omega^A(\mathbb R^n)\in(0,1)$},
\end{equation}
cf.\ the inequality in (\ref{ii''}). Then obviously $(\mu_j)\subset\breve{\mathcal E}^+(A)$. To show that, actually,
\begin{equation}\label{mujj}(\mu_j)\in\mathbb M_f(A),\end{equation}
we deduce from (\ref{muj}) with the aid of a straightforward verification that
\begin{align*}
 w_f(A)&\leqslant I_f(\mu_j)=I_f(\omega^A)+q^2\|\tau_j\|^2+2q\int\bigl(U^{\omega^A}-U^\omega\bigr)\,d\tau_j\\
 {}&=I_f(\omega^A)+q^2\|\tau_j\|^2=\widehat{w}_f(A)+q^2\|\tau_j\|^2\leqslant w_f(A)+q^2\|\tau_j\|^2,
\end{align*}
where the second equality holds since $U^{\omega^A}$ equals $U^\omega$ n.e.\ on $A$, hence $\tau_j$-a.e., while the subsequent two relations are due to (\ref{hatw}) and (\ref{hatww}). Letting now $j\to\infty$, in view of the former relation in (\ref{ii'''}) we obtain $\lim_{j\to\infty}I_f(\mu_j)=w_f(A)$, whence (\ref{mujj}).

Noting that $(\tau_j)$ is obviously vaguely convergent to zero, we have, by (\ref{muj}),
\begin{equation}\label{mujjj}\mu_j\to\omega^A\quad\text{vaguely as $j\to\infty$}.\end{equation}
The vague topology on $\mathfrak M$ being Hausdorff, we infer from (\ref{mujj}) and (\ref{mujjj}), by use of Lemma~\ref{l-extr}, that, actually, $\omega^A=\xi$, and hence $\xi(\mathbb R^n)<1$, by virtue of (\ref{ii''}). Applying Corollary~\ref{l-extr5} therefore yields that, in case (\ref{ii''}), Problem~\ref{pr} is indeed unsolvable.

The proof of Theorem~\ref{th-main} will be finalized in Section~\ref{subs3} by verifying (\ref{C3'}).

\subsection{Proof of Theorem~\ref{th-main1}} Under the stated assumptions, the solution $\lambda_{A,\tilde{f}}$ to the dual problem does exist, and moreover $\lambda_{A,\tilde{f}}=\lambda_{A,f}$ (Lemma~\ref{l-dual}). Since obviously
\begin{equation*}
U_{\tilde{f}}^{\lambda_{A,\tilde{f}}}=U_f^{\lambda_{A,f}}\quad\text{n.e.\ on $A$},
\end{equation*}
we get by integration $\int U_{\tilde{f}}^{\lambda_{A,\tilde{f}}}\,d\lambda_{A,\tilde{f}}=\int U_f^{\lambda_{A,f}}\,d\lambda_{A,f}$,
or equivalently
\begin{equation}\label{dual-c}c_{A,\tilde{f}}=c_{A,f},\end{equation}
$c_{A,\tilde{f}}$ being the inner $\tilde{f}$-weighted equilibrium constant for $A$ (see Theorem~\ref{th-ch2}). Thus
\begin{equation}\label{dual-L}\Lambda_{A,\tilde{f}}=\Lambda_{A,f},\end{equation}
the class $\Lambda_{A,\tilde{f}}$ being defined by (\ref{gamma}) with $f$ replaced by $\tilde{f}$.
Now, taking into account relation $(\omega^A)^A=\omega^A$ (see Section~\ref{some}, (c) with $Q:=A$) as well as (\ref{dual-c}) and (\ref{dual-L}), and applying  \cite[Theorem~2.7]{Z-Rarx} to $\lambda_{A,\tilde{f}}$, we obtain what was needed to prove.

However, in view of a decisive role of Theorem~\ref{th-main1} in the subsequent analysis, we would also like to provide its direct proof. As noted at the beginning of Section~\ref{sec-alt},
\begin{equation}\label{lambda}
 \lambda_{A,f}\in\Lambda_{A,f}.
\end{equation}

Suppose first that $c_*(A)=\infty$. Then (\ref{balineq}) must be fulfilled (Theorem~\ref{th-main}), which combined with (\ref{eqq}) yields
$\omega^A(\mathbb R^n)=1$. Applying Theorem~\ref{equality} we therefore get the latter relation in (\ref{RRR}) as well as (\ref{C2}). Substituting (\ref{C2}) into (\ref{gamma}) shows that in the case in question, $\Lambda_{A,f}$ consists, in fact, of all $\mu\in\mathfrak M^+$ having the property $U^\mu\geqslant U^\omega$ n.e.\ on $A$, or equivalently $U^\mu\geqslant U^{\omega^A}$ n.e.\ on $A$. That is,
\begin{equation}\label{d1}
\Lambda_{A,f}=\Gamma_{A,\omega^A},
\end{equation}
$\Gamma_{A,\omega^A}$ being defined by means of (\ref{gammaa}) with $\zeta:=\omega^A\in\mathcal E^+$.
Now, combining the latter equality in (\ref{RRR}) with $\omega^A=(\omega^A)^A$, we obtain
\begin{equation}\label{d2}
\lambda_{A,f}=(\omega^A)^A.
\end{equation}
On account of (\ref{lambda})--(\ref{d2}), applying Theorem~\ref{th-bal1}(iii$_1$) and Theorem~\ref{th-bal2}(i$_2$) to $\omega^A\in\mathcal E^+$ gives (ii) and (i), respectively. Finally, as seen from (\ref{C2}), (\ref{ee}), and (\ref{d2}), the remaining assertion (iii) will be proved once we show that $(\omega^A)^A$ is the only measure in $\mathcal E'(A)$ with $U^{(\omega^A)^A}=U^\omega$ n.e.\ on $A$, or equivalently $U^{(\omega^A)^A}=U^{\omega^A}$ n.e.\ on $A$, which, however, follows directly from Theorem~\ref{th-bal1}(ii$_1$) applied to $\sigma:=\omega^A\in\mathcal E^+$.

It is left to consider the case where $c_*(A)<\infty$. Define
\begin{equation}\label{delta}
\vartheta:=\omega^A+q\gamma_A,
\end{equation}
where $\gamma_A$ is the inner equilibrium measure on $A$ (see Theorem~\ref{th-eq''}), while
\[q:=\frac{1-\omega^A(\mathbb R^n)}{c_*(A)}.\]
As $\gamma_A(\mathbb R^n)=c_*(A)$, we infer from (\ref{eqq}), (\ref{eqfin}), and (\ref{g1}) that $q\geqslant0$ as well as
\begin{equation}
 \label{delta1}\vartheta\in\breve{\mathcal E}^+(A).
\end{equation}
Furthermore, it follows from (\ref{delta}), by use of (\ref{ineq1}), (\ref{eqne}), and Lemma~\ref{str-sub}, that
\[U_f^\vartheta=\bigl(U^{\omega^A}-U^\omega\bigr)+qU^{\gamma_A}=q\]
holds true n.e.\ on $A$, hence $\vartheta$-a.e., and consequently
\begin{equation}\label{delta2}U_f^\vartheta=\int U_f^\vartheta\,d\vartheta=q\quad\text{n.e.\ on $A$}.\end{equation}
By virtue of Theorem~\ref{th-ch2}, relations (\ref{delta1}) and (\ref{delta2}) show that, actually,
\begin{equation}\label{q''}q=c_{A,f}\quad\text{and}\quad\lambda_{A,f}=\vartheta=\omega^A+c_{A,f}\gamma_A,
\end{equation}
thereby establishing (\ref{C1}) as well as the former equality in (\ref{RRR}).

But $\gamma_A=(\gamma_A)^A$ (Corollary~\ref{cor-eq}) and $\omega^A=(\omega^A)^A$ (see above); whence, by (\ref{q''}),
\begin{equation}\label{delta''}
\lambda_{A,f}=(\omega^A+c_{A,f}\gamma_A)^A=\lambda_{A,f}^A.
\end{equation}
As seen from (\ref{delta2}) and (\ref{q''}), $U_f^{\lambda_{A,f}}=c_{A,f}$ n.e.\ on $A$, or equivalently \[U^{\lambda_{A,f}}=c_{A,f}+U^\omega\quad\text{n.e.\ on $A$}.\] Moreover, this equality uniquely determines $\lambda_{A,f}$ within $\mathcal E^+(A)$ $\bigl({}=\mathcal E'(A)\bigr)$, which follows from (\ref{delta''}) by virtue of Theorem~\ref{th-bal1}(ii$_1$) with $\sigma:=\lambda_{A,f}$. This proves (iii).

Using (\ref{ineq1}), (\ref{eqne}), and Lemma~\ref{str-sub} once again, we also observe that in the case in question, the class $\Lambda_{A,f}$ consists, in fact, of all $\mu\in\mathfrak M^+$ with
\[U^\mu\geqslant U^{\omega^A+c_{A,f}\gamma_A}=U^{\lambda_{A,f}}\quad\text{n.e.\ on $A$},\]
the equality being valid by virtue of (\ref{q''}). Thus
\[\Lambda_{A,f}=\Gamma_{A,\lambda_{A,f}},\]
which together with (\ref{delta''}) imply the remaining assertions (i) and (ii) by making use of Theorem~\ref{th-bal2}(i$_2$) and Theorem~\ref{th-bal1}(iii$_1$), respectively, with $\zeta:=\sigma:=\lambda_{A,f}$.

\subsection{Proof of Theorem~\ref{th-main} (the end)}\label{subs3} To complete the proof of Theorem~\ref{th-main}, it is left to verify (\ref{C3'}). We thus assume $\omega^A(\mathbb R^n)<1$; then necessarily $c_*(A)<\infty$, for if not, $\lambda_{A,f}$ would not exist (see Section~\ref{subs1} for a proof). Furthermore, $\omega^A(\mathbb R^n)<1$ necessarily yields $\lambda_{A,f}\ne\omega^A$, which together with the former equality in (\ref{RRR}) gives $c_{A,f}\ne0$. Combining this with (\ref{C1}) results in (\ref{C3'}).

\subsection{Proof of Theorem~\ref{th-main3}}\label{sec-pr-th-main3} Assume the requirements of the theorem are fulfilled. Denoting $\omega_1:=\omega|_{A_R^c}$,\footnote{$A_R$ being Borel measurable \cite[Theorem~5.2]{Z-bal2}, the trace $\omega|_{A_R}$ as well as $\omega|_{A_R^c}$ is well defined.} where $A_R^c:=(A_R)^c$, we have $\omega_1\in\mathfrak M^+(A_R^c)$, and hence
\begin{equation}\label{Dm}S(\omega_1^A)=\left\{
\begin{array}{cl}A&\text{if \ $\alpha<2$},\\
\partial A&\text{otherwise}.\\ \end{array} \right.
\end{equation}
Indeed, if $\omega_1:=\varepsilon_y$, where $y\in A_R^c$, then (\ref{Dm}) was established in \cite[Theorem~4.1]{Z-bal2}, while otherwise, it follows by making use of the integral representation formula
\[\zeta^A=\int\varepsilon_y^A\,d\zeta(y),\quad\zeta\in\mathfrak M^+\] (see \cite[Theorem~5.1]{Z-bal2}), applied to $\zeta:=\omega_1$.

Also note that since $\omega_2:=\omega|_{A_R}\in\mathcal E^+$ by assumption, $\omega_2\in\mathcal E^+(A)$, whence \begin{equation}\label{Dm'}\omega_2^A=\omega_2,\end{equation}
$\omega_2^A$ being the orthogonal projection of $\omega_2$ onto the strongly complete, convex cone $\mathcal E^+(A)$ (see Theorem~\ref{th-bal1}(i$_1$), combined with (\ref{ee})).

Assuming first that $c_*(A)=\infty$, on account of (\ref{RRR}) we have $\lambda_{A,f}=\omega^A$. Therefore, (\ref{Dm}) and (\ref{Dm'}) result in (\ref{RRR'}), for $\omega^A=\omega_1^A+\omega_2^A$ (see Section~\ref{some}, (e)).

If now $c_*(A)<\infty$, then $\lambda_{A,f}=\omega^A+c_{A,f}\gamma_A$, where $c_{A,f}\geqslant0$ (see Theorem~\ref{th-main1}), and (\ref{RRR'}) is obtained by combining (\ref{Dm}) and (\ref{Dm'}) with \cite[Theorem~7.2]{Z-bal}, providing a description of the support of the equilibrium measure $\gamma_A$.

\subsection{Proof of Theorem~\ref{th-main2}}\label{sec-prpr} Due to $\omega^A(\mathbb R^n)>1$, the solution $\lambda_{A,f}$ to Problem~\ref{pr} does exist, and moreover $c_{A,f}<0$ (Theorem~\ref{th-main}), whereas according to Theorem~\ref{th-ch2},
\begin{equation}
\label{ext-pr2}
U^{\lambda_{A,f}}_f=c_{A,f}\quad\text{$\lambda_{A,f}$-a.e.\ on $\mathbb R^n$.}
\end{equation}

Assume to the contrary that $S(\lambda_{A,f})$ is noncompact. Then, by (\ref{ext-pr2}),
there exists a sequence $(x_j)\subset A$ such that $|x_j|\to\infty$ as $j\to\infty$, and moreover
\begin{equation*}-c_{A,f}\leqslant U^{\lambda_{A,f}}(x_j)-c_{A,f}=U^\omega(x_j)\quad\text{for all $j\in\mathbb N$},\end{equation*}
whence, by $c_{A,f}<0$ and (\ref{L}),
\begin{equation}\label{ext-pr'}L:=\lim_{|x|\to\infty, \ x\in A}\,U^\omega(x)>0,
\end{equation}
which however contradicts the assumption $L=0$.

\subsection{Proof of Corollary~\ref{th-main2'}} Since, by assumption, the set $A$ is not inner $\alpha$-thin at infinity, $\omega(\mathbb R^n)>1$ holds if and only if $\omega^A(\mathbb R^n)>1$ (Theorem~\ref{th-eq}(vi$_3$)). Therefore, according to what has just been shown in Section~\ref{sec-prpr}, (\ref{ext-pr'}) must be true, which however is impossible on account of Theorem~\ref{th-eq}(ii$_3$).

\subsection{Proof of Theorem~\ref{shrpp}} Let $A$ be as indicated at the beginning of Section~\ref{sec-descrrr}, and let $A$ be $\alpha$-thin at infinity. By Theorem~\ref{th-eq}(i$_3$), then there is the equilibrium measure $\gamma_A$, introduced in Definition~\ref{def-eq}. Theorem~\ref{th-eq}(ii$_3$), applied to $\gamma_A$ and each of the sets $A^c\cap\{|x|>r\}$, $r\in(0,\infty)$, which are obviously not $\alpha$-thin at infinity, gives
\begin{equation}\label{upto}
\liminf_{A^c\ni x\to\infty_{\mathbb R^n}}\,U^{\gamma_A}(x)=0,
\end{equation}
$\infty_{\mathbb R^n}$ being the Alexandroff point of $\mathbb R^n$. Noting from \cite[Theorem~2.2]{Z-bal2} that
\begin{equation}\label{jj}\varepsilon_x^A(\mathbb R^n)=U^{\gamma_A}(x)\quad\text{for all $x\in\mathbb R^n$},\end{equation}
we thus conclude from (\ref{upto}) that there is a sequence $(x_j)\subset A^c$ convergent to $\infty_{\mathbb R^n}$, and such that
\begin{equation}\label{j}
\lim_{j\to\infty}\,\varepsilon_{x_j}^A(\mathbb R^n)=0.
\end{equation}
Define
\[q_j:=1/\varepsilon_{x_j}^A(\mathbb R^n),\quad\omega_j:=q_j\varepsilon_{x_j},\quad f_j:=-U^{\omega_j};\]
then $1\leqslant q_j<\infty$ (see (\ref{ineqne}) and (\ref{jj})), $q_j\to\infty$ as $j\to\infty$ (see (\ref{j})), and also
\begin{equation}\label{jjj}\omega_j^A\in\mathcal E^+,\quad\omega_j^A(\mathbb R^n)=1.\end{equation}
Applying Theorem~\ref{equality}, we now infer from (\ref{jjj}) that $\lambda_{A,f_j}$ does exist, and moreover
\[\lambda_{A,f_j}=\omega_j^A.\]
Therefore, by virtue of \cite[Theorem~4.1]{Z-bal2} (cf.\ Section~\ref{sec-pr-th-main3} above), $S(\lambda_{A,f_j})$ is given by the right-hand side in (\ref{Dm}), and hence the sequences $(x_j)$ and $(q_j)$ are as required.

\section{Applications}\label{sec-ap}

Assuming again that $A\subset\mathbb R^n$ is arbitrary (not necessarily satisfying $(\mathcal H_1)$), in the following Theorem~\ref{th-ult} we shall be concerned with $z\in\overline{A}$ whose inner $\alpha$-harmonic measure $\varepsilon_z^A$ is of finite energy. Clearly, to this end we only need to examine $z\in A_I$.

\begin{theorem}\label{th-ult}
Given $A\subset\mathbb R^n$ and $z\in A_I$, the following {\rm(i$_4$)}--{\rm(iv$_4$)} are equivalent.
\begin{itemize}
\item[{\rm(i$_4$)}] $\varepsilon_z^A\in\mathcal E^+$.
\item[{\rm(ii$_4$)}] If $A_j:=A\cap\{x\in\mathbb R^n:\ r^{j+1}\leqslant|x-y|<r^j\}$, where $r\in(0,1)$, then
      \begin{equation}\label{iii'}\sum_{j\in\mathbb N}\,\frac{c_*(A_j)}{r^{2j(n-\alpha)}}<\infty.\end{equation}
\item[{\rm(iii$_4$)}] $c_*(A_z^*)<\infty$, where $A_z^*$ is the inversion of $A$ with respect to the sphere $S(z,1)$.
\item[{\rm(iv$_4$)}] $A_z^*$ is inner $\alpha$-ultrathin at infinity {\rm(for definition see \cite[Section~2.3]{Z-bal2})}.
\end{itemize}

Furthermore, if any one of these {\rm(i$_4$)}--{\rm(iv$_4$)} is fulfilled, then the inner $\alpha$-harmonic measure $\varepsilon_z^A$ is representable in the form
\[\varepsilon_z^A=(\gamma_{A^*_z})^*,\]
$(\gamma_{A^*_z})^*$ being the Kelvin transform of the inner equilibrium measure $\gamma_{A^*_z}\in\mathcal E^+$ with respect to the sphere $S_{z,1}$.
\end{theorem}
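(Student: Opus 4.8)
The plan is to exploit the Kelvin transformation to transfer the whole problem, concerning the behaviour of $A$ near the point $z\in A_I$, into a problem concerning the behaviour of the inversion $A_z^*$ at infinity, where Theorem~\ref{th-eq} is available. First I would fix the sphere $S_{z,1}$ and recall the standard identities for the Kelvin transform with respect to it: for $\mu\in\mathfrak M^+$ the transform $\mu^*$ satisfies $U^{\mu^*}(x^*)=|x-z|^{n-\alpha}\,U^\mu(x)$ (with $x^*$ the inverse point), and energy is preserved in the sense $I(\mu^*)=I(\mu)$; moreover inversion interchanges a neighbourhood of $z$ with a neighbourhood of $\infty_{\mathbb R^n}$, and carries $A$ to $A_z^*$. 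From \cite[Section~IV.5.19]{L} one also has that Kelvin transformation commutes with balayage onto the image set, so that $(\varepsilon_z^A)^*=\gamma_{A_z^*}$ whenever the right-hand side is defined; this is exactly the reciprocity already recorded in Theorem~\ref{th-eq}, relation~(\ref{har-eq}), read with the roles of $A$ and $A_z^*$ interchanged (note $(A_z^*)_z^*=A$ and $z\notin A_z^*$ near infinity).

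Next I would establish the core equivalence (i$_4$)$\iff$(iii$_4$). Since $z\in A_I$, the measure $\varepsilon_z^A$ is well defined and nontrivial, and its Kelvin transform $(\varepsilon_z^A)^*$ is a positive measure on $A_z^*$ with $I\bigl((\varepsilon_z^A)^*\bigr)=I(\varepsilon_z^A)$; hence $\varepsilon_z^A\in\mathcal E^+$ if and only if $(\varepsilon_z^A)^*\in\mathcal E^+$. Now I would identify $(\varepsilon_z^A)^*$ as a candidate inner equilibrium measure of $A_z^*$: using the balayage characterization $U^{\varepsilon_z^A}=U^{\varepsilon_z}$ n.e.\ on $A$ (relation (\ref{ineq1})) together with the Kelvin identity $U^{\varepsilon_z}(x)=|x-z|^{\alpha-n}$, one gets $U^{(\varepsilon_z^A)^*}=1$ n.e.\ on $A_z^*$, so $(\varepsilon_z^A)^*$ lies in $\Gamma_{A_z^*}$ and, by the minimum-potential property transported through Kelvin transformation, it is the inner equilibrium measure $\gamma_{A_z^*}$. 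Therefore $\varepsilon_z^A\in\mathcal E^+$ iff $\gamma_{A_z^*}$ exists and has finite energy, which by the discussion preceding Theorem~\ref{th-eq''} (the "if and only if $c_*(A_z^*)<\infty$" dichotomy for finiteness of $I(\gamma_{A_z^*})$, keeping in mind $A_z^*$ is already inner $\alpha$-thin at infinity since $z\in A_I$ forces the Wiener series (\ref{iii}) for $A_z^*$ to converge) is equivalent to $c_*(A_z^*)<\infty$. This simultaneously yields the final displayed formula $\varepsilon_z^A=(\gamma_{A_z^*})^*$.

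It remains to fold in (ii$_4$) and (iv$_4$). For (iii$_4$)$\iff$(ii$_4$): I would write the Wiener-type series (\ref{iii}) for $c_*(A_z^*)<\infty$ in annuli centred at $z$ of radii $r^{-j}$, then pull these annuli back under inversion --- the annulus $\{r^{-(j+1)}\le|x^*-z|<r^{-j}\}$ is the image of $\{r^{j+1}\le|x-z|<r^j\}$ up to relabelling --- and use that inversion scales $c_*$ on a thin annulus near $z$ by the factor $|x-z|^{2(n-\alpha)}\sim r^{2j(n-\alpha)}$ coming from the square of the Kelvin factor in the capacity; this converts the convergent series for $c_*(A_z^*)$ into exactly the series (\ref{iii'}) defining (ii$_4$). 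The equivalence (iii$_4$)$\iff$(iv$_4$) is essentially the definition of inner $\alpha$-ultrathinness from \cite[Section~2.3]{Z-bal2}: a set is inner $\alpha$-ultrathin at infinity precisely when it is inner $\alpha$-thin at infinity and in addition its inner capacity is finite, so citing that definition closes the cycle of implications. The main obstacle I anticipate is the careful bookkeeping of the capacity-scaling factor under inversion restricted to a shell near $z$ --- getting the exponent $2(n-\alpha)$ right rather than $(n-\alpha)$, and controlling the bounded multiplicative distortion on each shell uniformly in $j$ --- since everything else reduces cleanly to transporting already-proved statements about $A_z^*$ at infinity through the Kelvin transformation.
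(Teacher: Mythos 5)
Your proposal is correct and follows essentially the same route as the paper: the paper's proof is a one-line citation of relation (\ref{har-eq}) with $A$ and $A_y^*$ interchanged together with \cite[Theorem~2.3]{Z-bal2}, and your argument is precisely an unpacking of those two ingredients --- the Kelvin-transform identity $\varepsilon_z^A=(\gamma_{A_z^*})^*$, the ``finite energy of $\gamma_{A_z^*}$ iff $c_*(A_z^*)<\infty$'' dichotomy, and the inversion scaling of capacities on shells producing the exponent $2(n-\alpha)$. The only caveat is that your stated definition of inner $\alpha$-ultrathinness (thin at infinity plus finite inner capacity) is really one of the equivalences established in the cited \cite[Theorem~2.3]{Z-bal2} rather than the definition itself, but this does not affect the correctness of the cycle of implications.
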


\begin{proof}
This follows from (\ref{har-eq}) and \cite[Theorem~2.3]{Z-bal2} with $A$ and $A_y^*$ interchanged.
\end{proof}

If $A_I^\circ$ denotes the set of all $z\in A_I$ satisfying the above (i$_4$)--(iv$_4$), then, in general,
\[A_I^\circ\Subset A_I,\]
which is obviously seen by comparing (\ref{iii'}) with (\ref{w}). It is also worth noting that, if $\alpha=2$ while $A$ is Borel, then $z\in A_I^\circ$ if and only if $A^*_z$ is outer $2$-thin at infinity in the sense of Brelot \cite[p.~313]{Brelot}, cf.\ \cite[Remark~2.3]{Z-bal2}.

\begin{example}\label{ex} Let $n=3$ and $\alpha=2$. Consider the rotation body
\begin{equation*}\Delta:=\bigl\{x\in\mathbb R^3: \ 0\leqslant x_1\leqslant1, \
x_2^2+x_3^2\leqslant\exp(-2x_1^{-\varrho})\bigr\},\quad\varrho\in(0,\infty).\end{equation*}
For any $\varrho>0$, the origin $x=0$ is $2$-irregular for $\Delta$ (Lebesgue cusp), and moreover
\[0\in A_I^\circ\iff\varrho>1.\]
(Compare with \cite[Section~V.1.3, Example]{L} and \cite[Example~2.1]{Z-bal2}.)
\end{example}

\begin{theorem}\label{th-ap}
 Assume that $A$ is not inner $\alpha$-thin at infinity and satisfies $(\mathcal H_1)$. If $f$, the external field in Problem~{\rm\ref{pr}}, has the form
 \[f:=f_{q,z}:=-qU^{\varepsilon_z},\]
 where $z\in A_I^\circ\cup\overline{A}^c$ and $q\in(0,\infty)$, then the following {\rm(a$_1$)}--{\rm(e$_1$)} hold true.
 \begin{itemize}
\item[{\rm(a$_1$)}] $\lambda_{A,f_{q,z}}$ exists if and only if $q\geqslant1$.
\item[{\rm(b$_1$)}] $\lambda_{A,f_{q,z}}=q\varepsilon_z^A$ if and only if $q=1$.
\item[{\rm(c$_1$)}] $c_{A,f_{q,z}}=0$ if $q=1$, and $c_{A,f_{q,z}}<0$ otherwise.
\item[{\rm(d$_1$)}] $S(\lambda_{A,f_{q,z}})$ is compact whenever $q>1$.
\item[{\rm(e$_1$)}] Assume that $A$ is closed, coincides with its reduced kernel, and that $A^c$ is connected unless $\alpha<2$. If moreover $q=1$, then
\[S(\lambda_{A,f_{q,z}})=\left\{
\begin{array}{cl}A&\text{if \ $\alpha<2$},\\
\partial A&\text{otherwise}.\\ \end{array} \right.
\]
 \end{itemize}
\end{theorem}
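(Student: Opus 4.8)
The plan is to deduce (a$_1$)--(e$_1$) from the machinery already established, the first task being to check that the standing hypotheses $(\mathcal H_1)$--$(\mathcal H_3)$ are in force for $\omega:=q\varepsilon_z$. Of these, $(\mathcal H_1)$ is assumed outright and $(\mathcal H_2)$ is immediate from the form of $f_{q,z}$, so only $(\mathcal H_3)$, i.e.\ $\omega^A=q\varepsilon_z^A\in\mathcal E^+$, needs attention. If $z\in\overline{A}^c$, then $\mathrm{dist}(z,A)=\mathrm{dist}(z,\overline{A})>0$, so the separation condition of Remark~\ref{rem-pc} holds (take $\tau:=0$, $\sigma:=q\varepsilon_z$) and $(\mathcal H_3)$ follows; if $z\in A_I^\circ$, then $\varepsilon_z^A\in\mathcal E^+$ is exactly condition (i$_4$) of Theorem~\ref{th-ult}, whence again $(\mathcal H_3)$. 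I would also record the consequence of $A$ not being inner $\alpha$-thin at infinity that Theorem~\ref{th-eq}(vi$_3$) (with property~(b) of Section~\ref{some}) gives $\mu^A(\mathbb R^n)=\mu(\mathbb R^n)$ for every $\mu\in\mathfrak M^+$, so that $\omega^A(\mathbb R^n)=q$ and $\varepsilon_z^A(\mathbb R^n)=1$; by monotonicity of $c_*(\cdot)$ in the Wiener series of Theorem~\ref{th-eq}(iii$_3$) one moreover has $c_*(A)=\infty$, though this last fact is not strictly needed.

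Granting this, (a$_1$) is nothing but Corollary~\ref{cornoth}: $\lambda_{A,f_{q,z}}$ exists iff $\omega(\mathbb R^n)=q\geqslant1$. When $q=1$ we have $\omega^A(\mathbb R^n)=1$, so Theorem~\ref{equality} gives at once $\lambda_{A,f_{1,z}}=\omega^A=\varepsilon_z^A$ and $c_{A,f_{1,z}}=0$, which is the "$q=1$" part of (b$_1$) and of (c$_1$). When $q>1$ we have $\omega^A(\mathbb R^n)=q>1$, so Theorem~\ref{th-main} yields $c_{A,f_{q,z}}<0$ via relation~(\ref{C2'}), completing (c$_1$); and, whenever $\lambda_{A,f_{q,z}}$ exists ($q\geqslant1$), it has unit total mass, while $q\varepsilon_z^A$ has mass $q\neq1$, so the two cannot coincide for $q\ne1$, which together with the case $q=1$ proves (b$_1$).

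For (d$_1$), take $q>1$. Since $\omega=q\varepsilon_z$ has compact support, $U^\omega(x)=q|x-z|^{\alpha-n}\to0$ as $|x|\to\infty$, so the limit $L$ in~(\ref{L}) exists and equals $0$; as $\omega^A(\mathbb R^n)=q>1$, Theorem~\ref{th-main2} (equivalently Corollary~\ref{th-main2'}) shows $S(\lambda_{A,f_{q,z}})$ is compact. For (e$_1$), where $A$ is now closed, coincides with its reduced kernel, $A^c$ is connected unless $\alpha<2$, and $q=1$, I would apply Theorem~\ref{th-main3} to $\omega=\varepsilon_z$: its hypotheses hold because those of Theorem~\ref{th-main1} follow from $\omega^A(\mathbb R^n)=1\leqslant1$ (condition~(\ref{eqq})), and because $A_R\subset\overline{A}$ with $A_R\cap A_I=\varnothing$ forces $z\notin A_R$ (whether $z\in A_I^\circ$ or $z\in\overline{A}^c$), so $\omega|_{A_R}=0\in\mathcal E^+$ while $\omega=\varepsilon_z\neq0=\omega|_{A_R}$. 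Then~(\ref{RRR'}) applies, and with $S(\omega|_{A_R})=\varnothing$ it gives $S(\lambda_{A,f_{1,z}})=A$ if $\alpha<2$ and $S(\lambda_{A,f_{1,z}})=\partial A$ otherwise.

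No genuinely new argument is required; the statement is a packaging of earlier results. The one delicate input is the verification of $(\mathcal H_3)$ when $z\in A_I^\circ$, where one must know that the inner $\alpha$-harmonic measure $\varepsilon_z^A$ has finite energy precisely at such points --- the content of Theorem~\ref{th-ult} and of the definition of $A_I^\circ$. The rest is bookkeeping: determining, case by case in $q$, which of Corollary~\ref{cornoth}, Theorem~\ref{equality}, Theorem~\ref{th-main}, Theorem~\ref{th-main2}/Corollary~\ref{th-main2'}, and Theorem~\ref{th-main3} applies, and checking that its hypotheses are met.
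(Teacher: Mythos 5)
Your proposal is correct and follows essentially the same route as the paper: verify that $(\mathcal H_1)$--$(\mathcal H_3)$ hold for $\omega:=q\varepsilon_z$ (the key point being $\varepsilon_z^A\in\mathcal E^+$, by the definition of $A_I^\circ$ or by the separation condition when $z\in\overline{A}^c$), then read off (a$_1$)--(e$_1$) from Corollary~\ref{cornoth}, Theorems~\ref{equality}/\ref{th-main1}, relations (\ref{C2'})--(\ref{C1'}), Corollary~\ref{th-main2'}, and the support description. The only cosmetic differences are that you make the verification of $(\mathcal H_3)$ explicit where the paper leaves it implicit, and you route (e$_1$) through Theorem~\ref{th-main3} (checking $\omega|_{A_R}=0$) whereas the paper cites the intermediate formula (\ref{Dm}) directly; both amount to the same computation.
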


\begin{proof} Since $\varepsilon_z^A\in\mathcal E^+$, the results presented in Section~\ref{sec-main} are applicable to $f:=f_{q,z}$. Therefore, $\lambda_{A,f_{q,z}}$ exists if and only if $q\geqslant1$ (Corollary~\ref{cornoth}), and it is of compact support provided that $q>1$ (Corollary~\ref{th-main2'}). This proves (a$_1$) and (d$_1$).
Next, as $A$ is not $\alpha$-thin at infinity, Theorem~\ref{th-eq}(vi$_3$) applied to $\omega:=q\varepsilon_z$ gives $\omega^A(\mathbb R^n)=q$, which together with (\ref{C2'}) and (\ref{C1'}) yields (c$_1$). If moreover $q=1$, then $\omega^A(\mathbb R^n)=1$, and applying Theorem~\ref{th-main1} shows that, actually, $\lambda_{A,f_{q,z}}=\varepsilon_z^A$ (see the latter relation in (\ref{RRR})), which establishes the "if" part of (b$_1$). To verify the "only if" part, assume that $\lambda_{A,f_{q,z}}=q\varepsilon_z^A$, but, to the contrary, $q\ne1$. Then, again by Theorem~\ref{th-eq}(vi$_3$),
\[1=\lambda_{A,f_{q,z}}(\mathbb R^n)=q\varepsilon_z^A(\mathbb R^n)=q\ne1,\] which is impossible. Finally, under the assumptions of (e$_1$), we have $\lambda_{A,f_{q,z}}=\varepsilon_z^A$ according to (b$_1$), and the claim follows at once from (\ref{Dm}) with $\omega_1:=\varepsilon_z$.
\end{proof}

\begin{example}\label{ex2}Let $n=3$, $\alpha=2$, and let $A:=\Delta\cup H$, where $\Delta$ is introduced in Example~\ref{ex} with $\varrho>1$, whereas $H:=\{x_1\geqslant 1\}$. Then the (closed) set $A$ is not $2$-thin at infinity, while $A_I^\circ$ is reduced to the origin $\{0\}$. Define $f_q=-qU^{\varepsilon_0}$, where $q\in(0,\infty)$ and $\varepsilon_0$ denotes the unit Dirac measure at $x=0$. Then the solution $\lambda_{A,f_q}$ does exist for any $q\geqslant1$, and moreover $S(\lambda_{A,f_1})=\partial A$, whereas $S(\lambda_{A,f_q})$ is a compact subset of $A$ for any $q>1$. See Theorem~\ref{th-ap}, (a$_1$), (e$_1$), and (d$_1$), respectively.
\end{example}

\begin{remark}Under the assumptions of Example~\ref{ex2}, there is no compensation effect between
$-q\varepsilon_0$, where $q\geqslant1$, and $\lambda_{A,f_q}$, though these two charges are oppositely charged and carried by the same conductor $A$. (Actually, this is also the case even when $q$, the attractive mass placed at $0$, becomes arbitrarily large.) This phenomenon seems to be quite surprising and even to contradict our physical intuition. Therefore it would be interesting to illustrate this fact by means of numerical experiments which must of course depend on the geometry of $A$ in a neighborhood of $0\in A_I^\circ$.
\end{remark}

\begin{remark} Assume that $A$ is as indicated in Theorem~\ref{th-ap}(e$_1$), and that $\partial A$ is unbounded unless $\alpha<2$. Another interesting phenomenon is that then, the support of $\lambda_{A,f_{1,z}}$, where $z\in A_I^\circ\cup A^c$, is noncompact, whereas that of $\lambda_{A,f_{1+\beta,z}}$ is already compact for any $\beta>0$. Thus any increase (even arbitrarily small) of the attractive mass $1$ placed at $z\in A_I^\circ\cup A^c$ changes drastically the solution to the problem in question, making it zero in some neighborhood of the point at infinity.
\end{remark}

\begin{remark}
The results of this section can be generalized to any set $A$~--- independently of how large it is at infinity. However, then $1$, the critical value of the attractive mass placed at $z\in A_I^\circ\cup\overline{A}^c$, must be replaced by $1/\varepsilon_z^A(\mathbb R^n)\in[1,\infty)$, the inverse of the total mass of the inner $\alpha$-harmonic measure $\varepsilon_z^A$ of $A$ at the point $z$.
\end{remark}

\section{Acknowledgements} This work was supported by a grant from the Simons Foundation (1030291, N.V.Z.).

\end{document}